\allowdisplaybreaks \numberwithin{equation}{section}
\numberwithin{equation}{section}
\newtheorem{theorem}{Theorem}[section]
\newtheorem{proposition}[theorem]{Proposition}
\newtheorem{lemma}[theorem]{Lemma}
\newtheorem*{Yudovich's Theorem}{Yudovich's Theorem}
\theoremstyle{definition}
\newtheorem{definition}[theorem]{Definition}
\theoremstyle{remark}
\newtheorem{remark}[theorem]{Remark}
\begin{document}

\title
[On concentrated traveling vortex pairs]{On concentrated traveling vortex pairs with prescribed impulse}

 \author{Guodong Wang}

\address{Institute for Advanced Study in Mathematics, Harbin Institute of Technology, Harbin 150001, P.R. China}
\email{wangguodong@hit.edu.cn}


\begin{abstract}
In this paper, we consider a constrained maximization problem related to planar vortex pairs with prescribed impulse. 
We prove existence, stability and  asymptotic behavior for the maximizers, hence obtain a family of stable traveling vortex pairs  approaching a pair of point vortices with equal magnitude and opposite signs. As a corollary, we get fine asymptotic estimates for Burton's vortex pairs with large impulse. For the non-concentrated case,  
we prove a form of stability for the Chaplygin-Lamb dipole.
 \end{abstract}

\maketitle

\section{Introduction and main result}

For a two-dimensional ideal fluid of unit density with vanishing velocity at infinity, the governing  equations take the form
\begin{equation}\label{vor0}
\begin{cases}
\partial_t\omega+\mathbf v\cdot\nabla\omega=0, &  \mathbf x=(x_1,x_2)\in\mathbb R^2,\,\,t>0,\\
\mathbf v(t,\mathbf x)=-\frac{1}{2\pi}\int_{\mathbb R^2}\frac{(\mathbf x-\mathbf y)^\perp}{|\mathbf x-\mathbf y|^2}\omega(t,\mathbf y)d\mathbf y,
\end{cases}
\end{equation}
where  $\mathbf v=(v_1,v_2)$ is the velocity field,  $\omega=\partial_{x_1}v_2-\partial_{x_2}v_1$ is the scalar vorticity, and $\perp $ denotes the clockwise rotation through $\pi/2$, that is, $ (x_1,x_2)^\perp=( x_2,-x_1)$. 
For initial vorticity $\omega|_{t=0}=\omega_0\in L^1\cap L^\infty(\mathbb R^2),$ global existence and uniqueness of weak solutions for the vorticity equation \eqref{vor0}  was established by Yudovich \cite{Y}. A modern proof of Yudovich's result can   be found in \cite{MB} or \cite{MP}.

 In this paper, we are interested in a special class of solutions of Yudovich type, called \emph{traveling vortex pairs}, that is,  weak solutions exhibiting odd symmetry with respect to some line   and traveling along the direction of the line at a constant speed without changing the profile. After a suitable translation and rotation, we can assume that $L$ coincides with the $x_1$-axis, then a vortex pair  $\omega$ traveling with speed $b$ takes the form
\begin{equation}\label{tvp}
\omega(t,\mathbf x)=\zeta(x_1-bt,x_2)-\zeta(x_1-bt,-x_2) \quad\forall\,\mathbf x=(x_1,x_2)\in\mathbb R^2,
\end{equation}
where $\zeta$ is an integrable and bounded function supported in the upper half-plane $\Pi=\{\mathbf x=(x_1,x_2)\in\mathbb R^2\mid x_2>0\}.$ In the literature, it is often the case that $\zeta$ has bounded support.

When  studying  vortex pairs, it is  convenient to consider the following vorticity  equation in the upper half-plane
\begin{equation}\label{vor1}
\begin{cases}
\partial_t\omega+\nabla^\perp\mathcal G\omega\cdot\nabla\omega=0, &  \mathbf x=(x_1,x_2)\in \Pi,\,\,t>0,\\
 \mathcal G\omega(t,\mathbf x)=\frac{1}{2\pi}\int_\Pi\ln\frac{|\mathbf x-\bar{\mathbf y}|}{|\mathbf x-\mathbf y|}\omega(t,\mathbf y)d\mathbf y.
\end{cases}
\end{equation}
Here $\nabla^\perp=(\partial_{x_2},-\partial_{x_1}),$ and  $\bar{\mathbf y }$  denotes the reflection  of $\mathbf y$ in the $x_1$-axis, i.e., $\bar{\mathbf y}=(y_1,-y_2).$
Once we have obtained a traveling solution  to \eqref{vor1}, we can extend it to the whole plane by reflection in the $x_1$-axis to get a traveling vortex pair for \eqref{vor0}. The vorticity equation \eqref{vor1} can also be regarded as the governing equation for  an ideal  fluid of unit density such that the velocity $\mathbf v=(v_1,v_2)$ satisfies  
\[v_2\equiv0\mbox{ on }\partial \Pi=\{\mathbf x\in\mathbb R^2\mid x_2=0\}\,\,\mbox{ and }\,\, \lim_{|\mathbf x|\to+\infty}|\mathbf v(t,\mathbf x)|=0,\quad\forall\,t\geq0.\]

The study for traveling vortex pairs has lasted for more than one century.
For experimental or numerical results, see \cite{DS,OZ,Pu} for example. For rigorous existence results, see \cite{B11,B6,B10,CLZ,HM,Lamb,Nor,SV,T,Yang1,Yang2} and the references therein.  One famous traveling vortex pair is the \emph{Chaplygin-Lamb dipole}, independently introduced by Chaplygin \cite{Chap} and Lamb \cite{Lamb} in the early 20th century. For the Chaplygin-Lamb dipole, the stream function  can be explicitly expressed  in terms of the Bessel function of the first kind (\cite{Lamb}, p.245), and the support of the vorticity is exactly a disk (therefore the Chaplygin-Lamb dipole is also called Lamb's circular vortex pair).  A form of orbital stability for the Chaplygin-Lamb dipole was recently proved by Abe-Choi \cite{AC}. In Section 5, we will consider the  Chaplygin-Lamb dipole and prove a different kind of stability.
Another typical example is a pair of \emph{point vortices} with equal magnitude and opposite signs, taking the form
\begin{equation}\label{frm}
\omega=\kappa \delta_{\mathbf z(t)}-\kappa\delta_{\bar{\mathbf z}(t)}, \quad\mathbf z(t)=(bt,d),
\end{equation}
where $b$ represents the traveling speed, $d>0$ is fixed,  and $\delta_{\mathbf z(t)}$ is the Dirac measure centered on $\mathbf z(t).$   Note that \eqref{frm} is just a formal singular solution to the vorticity equation \eqref{vor0}, not in the sense of Yudovich. For a rigorous discussion on the relation between point vortices and the vorticity equation \eqref{vor0}, we refer  the interested reader to Marchioro-Pulvirenti \cite{MP0}. According to \cite{MP0}, $\mathbf z(t)$ in \eqref{frm} is supposed to satisfy the following Kirchhoff-Routh equation
\[\frac{d\mathbf z(t)}{dt}=\frac{\kappa}{2\pi}\frac{(\mathbf z(t)-\bar{\mathbf z}(t))^\perp}{|\mathbf z(t)-\bar{\mathbf z}(t)|^2}\]
Therefore $\kappa,b,d$ necessarily satisfy the formula
\begin{equation}\label{relation}
 4\pi bd=\kappa.
\end{equation}

An important and interesting problem related to \eqref{frm} is the \emph{desingularization problem}, that is,
to find a family of regular solutions in the sense of Yudovich such that the vorticity is concentrated and localized near the two points $(0,d)$ and $(0,-d)$ and approximate the singular solution \eqref{frm}.   More precisely, one needs to construct a family of concentrated vortex pairs $\{\omega^\varepsilon\},$ where $\varepsilon>0$ is a parameter,  such that
\[\omega^\varepsilon(t,\mathbf x)=\zeta^\varepsilon(x_1-bt,x_2)-\zeta^\varepsilon(x_1-bt,-x_2),\quad \int_\Pi\zeta^\varepsilon(\mathbf x)d\mathbf x=\kappa,\quad{\rm supp}(\zeta^\varepsilon)\subset B_{o(1)}(0,d), \]
where   $o(1)\to 0$ as $\varepsilon\to0^+.$
This problem can be studied via several methods, including the vorticity method (see \cite{CLZ,T}),  the stream function method (see \cite{SV,Yang2}), and the contour dynamics reformulation (see \cite{HM}). Now that  the Chaplygin-Lamb dipole is not a vortex pair of the above desingularization type since its support  touches the $x_1$-axis.

Although there are  various existence results  on  concentrated traveling vortex pairs in the literature, the corresponding stability problem is less well studied. In this paper,  by stability we mean  Lyapunov type. More  specifically,   a vortex pair $\bar\omega$ is said to be stable if for any initial vorticity $\omega_0$  that is close to $\bar\omega$, the evolved vorticity $\omega_t$ remains close to $\bar\omega$ for all $t>0$. Since a vortex pair translates along the $x_1$ direction at a constant speed, it is more reasonable to consider orbital stability, the precise meaning of which  will be given in Theorem \ref{thm1}. It is worth mentioning that in most cases  stability is a more sophisticated problem than existence, since it usually requires better estimates.  To our limited knowledge,  there are very few  results on the stability of concentrated traveling vortex pairs in the literature.

Our main purpose in this paper is to prove the existence of a large class of   concentrated traveling vortex pairs with certain stability.  To make our main result concise, we introduce the concept of \emph{$L^p$-regular solutions}. Throughout this paper, let $2<p<+\infty$ be fixed.

\begin{definition}\label{def1}
An $L^p$-regular solution is a function $\omega\in C([0,+\infty);L^1 \cap  L^p(\Pi))$ such that 
\begin{itemize}
\item[(i)] $\omega$ solves the vorticity equation \eqref{vor1} in the distributional sense, that is,
\begin{equation*}
  \int_\Pi\omega(0,\mathbf x)\varphi(0,\mathbf x)d\mathbf x+\int_0^{+\infty}\int_\Pi\omega\left(\partial_t\varphi+\nabla^\perp \mathcal G\omega\cdot \nabla\varphi \right) d\mathbf xdt=0\quad\forall\,\varphi\in C_c^{\infty}(\Pi\times\mathbb R);
  \end{equation*}

\item[(ii)]  the kinetic energy $E$ and the impulse (parallel to the $x_1$-axis) $I$, defined by  
\begin{equation}\label{dev1}
E(\omega(t,\cdot))=\frac{1}{2}\int_\Pi \omega(t,\mathbf x)\mathcal G\omega(t,\mathbf x)d\mathbf x,\quad I(\omega(t,\cdot))=\int_\Pi x_2\omega(t,\mathbf x)d\mathbf x,
\end{equation}
are conserved, that is, for all $t>0$,  
\[E(\omega(t,\cdot))=E(\omega(0,\cdot)), \quad I(\omega(t,\cdot))=I(\omega(0,\cdot)).\]
\end{itemize} 
\end{definition}

Note that for any initial vorticity  $\omega_0\in L^1\cap L^p(\Pi)$ with bounded support, an $L^p$-regular solution always exists, but may not be unique, except in the case of $\omega_0$ being additionally  bounded. 
See \cite{B6}  for a detailed discussion on this issue.

We also need some notation. In the rest of this paper,
let $ \varrho\in L^p(\mathbb R^2)$ be given such that  \begin{itemize}
\item[(H1)] $\varrho$ is nontrivial and nonnegative;
\item[(H2)] $\varrho$ has compact support;
\item[(H3)] $\varrho$   is radially symmetric and decreasing, that is, 
\[\varrho(\mathbf x)=\varrho(\mathbf y)\quad \forall\,\mathbf x,\mathbf y\in\mathbb R^2 \mbox{ such that } |\mathbf x|=|\mathbf y|,\]
\[ \varrho(\mathbf x)\geq \varrho(\mathbf y)\quad \forall\,\mathbf x,\mathbf y\in\mathbb R^2 \mbox{ such that } |\mathbf x|\leq |\mathbf y|.\]
\end{itemize}
By (H1)-(H3), there exists some $r>0$  such that
\begin{equation}\label{dero}
\{\mathbf x\in\mathbb R^2\mid \varrho(\mathbf x)>0\} =B_r(\mathbf 0).
\end{equation}
Here  $\mathbf 0$ denotes the origin. Let $\kappa>0$ be fixed such that
\begin{equation}\label{deropq}
\kappa=\int_{\mathbb R^2}\varrho(\mathbf x)d\mathbf x.
\end{equation}
For $\varepsilon>0$, denote
\begin{equation}\label{scad}
\varrho^\varepsilon(\mathbf x)=\frac{1}{\varepsilon^2}\varrho\left(\frac{\mathbf x}{\varepsilon}\right),\quad \mathbf x\in\mathbb R^2.
\end{equation}
Let $\mathcal R(\varrho^\varepsilon)$ be  the set of all equimeasurable rearrangements of $\varrho^\varepsilon$ on $\Pi$, that is,
\[\mathcal R(\varrho^\varepsilon)=\{v\in L^p(\Pi)\mid \mathcal L\left(\{\mathbf x\in \Pi\mid v(\mathbf x)>s\}\right) =\mathcal L\left(\{\mathbf x\in \mathbb R^2\mid \varrho^\varepsilon(\mathbf x)>s\}\right)\,\,\forall\,s\in\mathbb R\}.\]
Here and henceforth, $\mathcal L$ denotes the two-dimensional Lebesgue measure.

For fixed $i_0>0$, define
\begin{equation}\label{sdef}
\mathcal S_{\varepsilon,i_0}=\{v\in\mathcal R(\varrho^\varepsilon)\mid I(v)=i_0\}.
\end{equation}
Consider the following maximization problem
\begin{equation}\label{maxim}
M_{\varepsilon,i_0}=\sup_{v\in\mathcal S_{\varepsilon,i_0}}E(v).
\end{equation}
Denote $\Gamma_{\varepsilon,i_0}$ the set of maximizers of $E$ over $\mathcal S_{\varepsilon,i_0}$, that is,
\begin{equation}\label{maximu}
\Gamma_{\varepsilon,i_0}=\left\{v\in\mathcal S_{\varepsilon,i_0}\mid E(v)=M_{\varepsilon,i_0}\right\}.
\end{equation}
 Note that $\Gamma_{\varepsilon,i_0}$ might be   empty.

Our main result  can be summarized as follows.
\begin{theorem}\label{thm1}
Let $p>2$ and $i_0>0$ be given. Let $\varrho\in L^p(\Pi)$ satisfy (H1)-(H3). Let $\Gamma_{\varepsilon,i_0}$ be defined by \eqref{maximu}.
Then there exists some $\varepsilon_0>0$, depending only on $\varrho$ and $i_0$, such that for any $\varepsilon\in(0,\varepsilon_0)$, the following  conclusions hold.
\begin{itemize}
\item [(1)] $\Gamma_{\varepsilon,i_0}\neq \varnothing.$
\item [(2)] Every $\zeta\in\Gamma_{\varepsilon,i_0}$ has bounded support, and is Steiner-symmetric with respect to the line $x_1=c$ for some $c\in\mathbb R.$
\item [(3)] For every $\zeta\in\Gamma_{\varepsilon,i_0}$, there exists some positive number $\lambda_\zeta$ and some increasing function $f_\zeta:\mathbb R\to\mathbb R\cup\{\pm\infty\}$  such that 
\begin{equation}
\zeta=f_\zeta(\mathcal G\zeta-\lambda_\zeta x_2) \,\,\mbox{ \rm a.e. in }   \Pi,
\end{equation}
\begin{equation}
\zeta=0 \,\,\mbox{ \rm a.e. in }  \{\mathbf x\in\Pi\mid \mathcal G\zeta(\mathbf x)-\lambda_\zeta x_2\leq 0\}.
\end{equation}
\item[(4)] For every $\zeta\in\Gamma_{\varepsilon,i_0},$ $\omega(t,\mathbf x)=\zeta(x_1-\lambda_\zeta t,x_2)-\zeta(x_1-\lambda_\zeta t,-x_2)$   is an $L^p$-regular solution to the vorticity \eqref{vor1},  thus yielding a traveling vortex pair with speed $\lambda_\zeta$.
\item[(5)]
Define
\begin{equation}\label{dev2}
\|v\|_{\mathfrak X_p}=|I(v)|+\|v\|_{L^1(\Pi)}+\|v\|_{L^p(\Pi)}.
\end{equation}
Then  $\Gamma_{\varepsilon,i_0}$ is orbitally  stable in the $\mathfrak X_p$ norm, that is, for any $\epsilon>0,$ there exists some $\delta>0$, such that
 for any  $L^p$-regular solution $\omega$ satisfying  
 \[\omega(0,\cdot) \mbox{ \rm is nonnegative with  bounded support,}\quad \inf_{\zeta\in\Gamma_{\varepsilon,i_0}}\|\omega(0,\cdot)-\zeta\|_{\mathfrak X_p}<\delta,\]
 it holds that
\[\inf_{\zeta\in\Gamma_{\varepsilon,i_0}}\|\omega(t,\cdot)-\zeta\|_{\mathfrak X_p}<\epsilon\quad\forall \,t\ge 0.\]
 
\item[(6)] For every $\zeta\in\Gamma_{\varepsilon,i_0},$ denote  $V_\zeta=\{\mathbf x\in\Pi\mid \zeta(\mathbf x)>0\}$. Then there is some $C>0$,  depending only on  $\varrho$ and $ i_0,$ such that
\[{\rm diam}(V_\zeta)\leq C\varepsilon\quad\forall\,\zeta\in \Gamma_{\varepsilon,i_0},\]
where ${\rm diam}(V_\zeta)$ is the diameter of $V_\zeta$ given by
\[{\rm diam}(V_\zeta)=\sup_{\mathbf x,\mathbf y\in V_\zeta}|\mathbf x-\mathbf y|.\]


\item[(7)] As $\varepsilon\to0^+$, $\lambda_\zeta\to  {\kappa^2}/(4\pi i_0)$ uniformly for $\zeta\in \Gamma_{\varepsilon,i_0}$. More precisely, for any $\epsilon>0,$ there exists some  $\varepsilon_2\in(0,\varepsilon_0),$ depending only on $\varrho,i_0$ and $\epsilon,$  such that for any $\varepsilon\in(0,\varepsilon_2),$ it holds that
\[\left|\lambda_\zeta- \frac{\kappa^2}{4\pi i_0}\right|<\epsilon\quad\forall\,\zeta\in\Gamma_{\varepsilon,i_0}.\]

\item[(8)] Define
\begin{equation}\label{gam0}
\Gamma^0_{\varepsilon,i_0}=\left\{\zeta\in \Gamma_{\varepsilon,i_0}\,\,\bigg|\,\, \int_\Pi x_1\zeta(\mathbf x)d\mathbf x=0\right\}.
\end{equation}
For $\zeta\in \Gamma^0_{\varepsilon,i_0}$, extend $\zeta$ to $\mathbb R^2$ such that $\zeta\equiv 0$ in the lower half-plane. Define 
\[\nu^{\zeta,\varepsilon}(\mathbf x)=\varepsilon^2\zeta(\varepsilon\mathbf x+\hat{\mathbf x}),\quad\hat{\mathbf x}=(0,i_0/\kappa).\]
 Then  $\nu^ {\zeta,\varepsilon}\to\varrho$ in $L^p(\mathbb R^2)$ uniformly as $\varepsilon\to0^+$.
More precisely, for any $\epsilon>0,$ there exists some  $\varepsilon_3\in(0,\varepsilon_0),$ depending only on $\varrho,i_0,p$ and $\epsilon,$  such that for any $\varepsilon\in(0,\varepsilon_3),$ it holds that
\[\|\nu^{\zeta,\epsilon}-\varrho\|_{L^p(\mathbb R^2)}<\epsilon\quad\forall\,\zeta\in\Gamma^0_{\varepsilon,i_0}.\]
\end{itemize}
\end{theorem}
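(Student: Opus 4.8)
The plan is to treat Theorem~\ref{thm1} as one variational package built on Burton's theory of rearrangements, deducing existence and the profile equation first, then symmetry, support and the traveling-wave property, then the concentration asymptotics, and finally stability. For existence (1) I would apply the direct method to a maximizing sequence $\{v_n\}\subset\mathcal S_{\varepsilon,i_0}$. Every element of $\mathcal R(\varrho^\varepsilon)$ has the distribution function of $\varrho^\varepsilon$, so $\{v_n\}$ is bounded in $L^1\cap L^p(\Pi)$ and weakly precompact in $L^p$. The one genuine obstruction is that $E$ and $I$ are invariant under horizontal translation, so a maximizing sequence could vanish or split. I would exclude both by concentration-compactness: vanishing forces $E(v_n)\to0$, which is impossible once one checks $M_{\varepsilon,i_0}>0$ with an explicit competitor; dichotomy is ruled out because the kernel $\frac1{2\pi}\ln(|\mathbf x-\bar{\mathbf y}|/|\mathbf x-\mathbf y|)$ is strictly positive on $\Pi\times\Pi$, so pulling the mass apart into two distant lumps loses a definite amount of interaction energy, while the impulse constraint $I=i_0$ together with the compact support of $\varrho$ pins the vertical position. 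After a horizontal translation the weak limit attains $M_{\varepsilon,i_0}$, and Burton's theorem---that a convex, weakly sequentially continuous functional attains its supremum over $\mathcal R(\varrho^\varepsilon)$ on $\mathcal R(\varrho^\varepsilon)$ itself (here $E$ is a positive quadratic form, hence convex)---upgrades it to a genuine rearrangement, giving $\Gamma_{\varepsilon,i_0}\neq\varnothing$.

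For the Euler--Lagrange structure (3), I would introduce a Lagrange multiplier $\lambda_\zeta$ for the impulse and show, using convexity of $E$, that $\zeta$ maximizes the \emph{linear} functional $v\mapsto\int_\Pi(\mathcal G\zeta-\lambda_\zeta x_2)v\,d\mathbf x$ over $\mathcal R(\varrho^\varepsilon)$: indeed $E(v)-\lambda_\zeta I(v)\le E(\zeta)-\lambda_\zeta I(\zeta)$ together with $E(v)\ge E(\zeta)+\int_\Pi\mathcal G\zeta\,(v-\zeta)$ gives $\int_\Pi(\mathcal G\zeta-\lambda_\zeta x_2)(v-\zeta)\le0$. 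Burton's description of maximizers of a linear functional then yields $\zeta=f_\zeta(\mathcal G\zeta-\lambda_\zeta x_2)$ with $f_\zeta$ increasing and $\zeta=0$ where $\mathcal G\zeta-\lambda_\zeta x_2\le0$, while positivity of $\lambda_\zeta$ follows by testing against a downward vertical shift. For (2), Steiner symmetry comes from the Riesz rearrangement inequality, since Steiner symmetrization in $x_1$ about the appropriate axis $x_1=c$ strictly increases the interaction energy while preserving the class $\mathcal R(\varrho^\varepsilon)$ and the impulse; boundedness of $V_\zeta$ is read off from the profile equation, as $\mathcal G\zeta-\lambda_\zeta x_2$ is bounded above and tends to $-\infty$ at spatial infinity, so its positivity set is bounded. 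Given (3), part (4) is then immediate: in the moving frame the stream function is exactly $\mathcal G\zeta-\lambda_\zeta x_2$, so $\nabla^\perp(\mathcal G\zeta-\lambda_\zeta x_2)$ is parallel to $\nabla\zeta$, the advection term vanishes, and the reflected field is a weak traveling solution; conservation of $E$ and $I$ makes it $L^p$-regular.

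For the asymptotics I would rescale about $\hat{\mathbf x}=(0,i_0/\kappa)$ and set $\nu^{\zeta,\varepsilon}(\mathbf x)=\varepsilon^2\zeta(\varepsilon\mathbf x+\hat{\mathbf x})$; a direct change of variables shows $\nu^{\zeta,\varepsilon}\in\mathcal R(\varrho)$, so it is bounded in $L^p$ with fixed norm $\|\varrho\|_{L^p}$. Expanding $E$ under this scaling shows the rescaled problem converges to the self-energy problem of maximizing $\frac1{4\pi}\iint\ln\frac1{|\mathbf x-\mathbf y|}\,\nu(\mathbf x)\nu(\mathbf y)\,d\mathbf xd\mathbf y$ over $\mathcal R(\varrho)$, whose unique maximizer (again Riesz) is the symmetric-decreasing function $\varrho$, the centering in $\Gamma^0_{\varepsilon,i_0}$ removing the residual translation. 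Thus any weak-$L^p$ limit of $\nu^{\zeta,\varepsilon}$ equals $\varrho$; since the $L^p$ norm is constant along the family, uniform convexity of $L^p$ with $p>2$ promotes weak to strong convergence, giving (8), and a contradiction argument makes it uniform in $\zeta$. Part (6) then follows because $\varrho$ is supported in $B_r$, so $V_\zeta$ has diameter $O(\varepsilon)$. For the speed (7) I would compute $\lambda_\zeta$ from the profile equation together with the concentration estimate: the leading interaction of the concentrated lump with its mirror image, normalized by $\int_\Pi x_2\zeta=i_0\approx\kappa d$, forces the height $d\to i_0/\kappa$ and reproduces the Kirchhoff--Routh relation $4\pi bd=\kappa$, i.e. $\lambda_\zeta\to\kappa^2/(4\pi i_0)$.

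The stability statement (5) is the hardest part and where I expect the main obstacle. I would argue by contradiction: if it failed, there would be $\epsilon_*>0$, admissible initial data $\omega_n(0,\cdot)\to\Gamma_{\varepsilon,i_0}$ in $\mathfrak X_p$, and times $t_n$ with $\inf_{\zeta\in\Gamma_{\varepsilon,i_0}}\|\omega_n(t_n,\cdot)-\zeta\|_{\mathfrak X_p}=\epsilon_*$. Since an $L^p$-regular solution conserves $E$ and $I$ by definition, and (for the nonnegative, compactly supported data considered) the underlying transport preserves the distribution function and hence the $L^1$ and $L^p$ norms, the states $w_n:=\omega_n(t_n,\cdot)$ satisfy $I(w_n)\to i_0$, $E(w_n)\to M_{\varepsilon,i_0}$, and $w_n$ is asymptotically a rearrangement of $\varrho^\varepsilon$. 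The crux is a quantitative compactness lemma: every nonnegative sequence that is asymptotically in $\mathcal R(\varrho^\varepsilon)$, with impulse $\to i_0$ and energy $\to M_{\varepsilon,i_0}$, converges (after horizontal translation) to $\Gamma_{\varepsilon,i_0}$ in $\mathfrak X_p$. This is proved by the same concentration-compactness mechanism as existence, now applied to a nearly constrained, nearly maximizing sequence, with an extra step controlling the defect between $w_n$ and an exact element of $\mathcal R(\varrho^\varepsilon)$ via comparison of distribution functions and the Lipschitz dependence of $E$ on $\mathfrak X_p$-bounded sets. Because $\Gamma_{\varepsilon,i_0}$ is horizontally translation-invariant, the conclusion contradicts $\inf_{\zeta}\|w_n-\zeta\|_{\mathfrak X_p}=\epsilon_*$. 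The delicate points are that $L^p$-regular solutions are in general non-unique and are not known to conserve all Casimirs, so the whole argument must run on $E$, $I$, $\|\cdot\|_{L^1}$ and $\|\cdot\|_{L^p}$ alone, and the defect estimate must be strong enough to keep $w_n$ within the range of the compactness lemma.
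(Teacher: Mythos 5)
Your treatment of items (1)--(5) is workable in outline but re-derives machinery the paper simply imports: the paper observes that $v\in\Gamma_{1,i_0/\varepsilon}$ if and only if $v^\varepsilon\in\Gamma_{\varepsilon,i_0}$ and that $E(v^\varepsilon)=E(v)$, so for small $\varepsilon$ the whole problem is the rescaled version of Burton's maximization of $E$ over $\mathcal S_{1,\alpha}$ with $\alpha=i_0/\varepsilon$ large, and existence, the Euler--Lagrange relation, Steiner symmetry, the traveling-wave property and orbital stability are quoted from \cite{B10,B11}. This scaling reduction also explains where $\varepsilon_0$ comes from (the requirement $\alpha>\alpha_0$), a point your argument leaves open. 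Your concentration-compactness sketch for existence and your ``quantitative compactness lemma'' for stability are essentially re-derivations of Burton's theorems, and the strict-subadditivity step you wave at (``pulling the mass apart loses a definite amount of interaction energy'') is precisely the hard part of that proof.

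The genuine gap is in (6)--(8), where your logical order is reversed. You propose to prove (8) first by rescaling and weak limits and then to read (6) off from the convergence $\nu^{\zeta,\varepsilon}\to\varrho$; neither step works as stated. To pass to the limit in the rescaled energy one must split $E$ into the self-interaction $\iint\ln\frac{1}{|\mathbf x-\mathbf y|}\zeta\zeta$ and the image term $\iint\ln|\mathbf x-\bar{\mathbf y}|\zeta\zeta$; the image term converges to $\frac{\kappa^2}{4\pi}\ln(2i_0/\kappa)$ only once you already know the vorticity concentrates near $\hat{\mathbf x}$, and the Burchard--Guo lemma used to identify the weak limit with $\varrho$ requires the rescaled profiles to be supported in a \emph{fixed} ball. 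Both inputs need a uniform-in-$\varepsilon$ bound on the vortex core, which is exactly what the paper spends Lemmas 3.3--3.16 and Proposition 3.17 establishing: an energy lower bound, the bound $T_\zeta\le C$ on the core kinetic energy, the Pohozaev identity giving $\lambda_\zeta\le C$ (your sketch never bounds $\lambda_\zeta$ a priori, yet the lower bound for $\mathcal G\zeta$ on $V_\zeta$ is unusable without it), and the mass-concentration contradiction argument forcing $\sup_{\mathbf x,\mathbf y\in V_\zeta}|\mathbf x-\bar{\mathbf y}|\le C$. Your alternative route to (6), that the profile equation shows the positivity set is bounded, gives boundedness for each fixed $\zeta$ but not uniformly in $\varepsilon$. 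Moreover, even granting (8), strong $L^p$ convergence of $\nu^{\zeta,\varepsilon}$ to $\varrho$ does not bound ${\rm diam}(V_\zeta)/\varepsilon$: a portion of $\{\nu^{\zeta,\varepsilon}>0\}$ of vanishing measure can sit arbitrarily far away without affecting the $L^p$ distance, so (6) is not a corollary of (8). The uniform core estimate is the paper's main technical contribution, and it is missing from your proposal.
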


\begin{remark}\label{rmk1}
Since  $E$ and $I$ are both invariant under any translation parallel to the $x_1$-axis, it is clear that
\begin{equation}\label{ccto}
\Gamma_{\varepsilon,i_0}=\{\zeta(\cdot+c\mathbf e_1)\mid \zeta\in\Gamma^0_{\varepsilon,i_0},\,\,c\in\mathbb R\},
\end{equation}
where $\mathbf e_1=(1,0).$
Therefore $\Gamma_{\varepsilon,i_0}^0$ is exactly the set of maximizers that are Steiner-symmetric in the $x_2$-axis.
As a result, by \eqref{ccto} and item (8) in Theorem \ref{thm1}, we see that after some suitable translation and scaling,  any $\zeta\in \Gamma_{\varepsilon,i_0}$ converges uniformly to $\varrho$ in $L^p(\mathbb R^2)$.
\end{remark}

\begin{remark}\label{rmk3}
By the definition of  $ \Gamma^0_{\varepsilon,i_0}$ and the fact that $I(\zeta)=i_0$ for any $\zeta\in \Gamma_{\varepsilon,i_0},$ we see that
\[\frac{1}{\kappa}\int_{\Pi}\mathbf x\zeta(\mathbf x)d\mathbf x=\left(0, \frac{i_0}{\kappa}\right)=\hat{\mathbf x}.\]
That is, for any $ \zeta\in \Gamma^0_{\varepsilon,i_0}$ the vorticity center is exactly $\hat{\mathbf x}.$ Taking into account item (6) in Theorem \ref{thm1}, we  deduce that
\[V_\zeta\subset B_{C\varepsilon}(\hat{\mathbf x})\quad\forall\,\zeta\in\Gamma^0_{\varepsilon,i_0}.\]
Therefore Theorem \ref{thm1} in fact provides a family of desingularized solutions for the following pair of point vortices
  \begin{equation}\label{frm2}
\omega=\kappa \delta_{\mathbf z(t)}-\kappa\delta_{\bar{\mathbf z}(t)}, \quad\mathbf z(t)=\left(\frac{\kappa^2 t}{4\pi i_0},\frac{i_0}{\kappa}\right).
\end{equation}
\end{remark}

The proofs of items (1)-(5) in Theorem \ref{thm1} follow from Burton's results in \cite{B10} and the scaling properties of $\mathcal S_{\varepsilon,i_0}$ and $\Gamma_{\varepsilon,i_0}$. Our focus in this paper is the asymptotic behavior (6)-(8).   To prove (6)-(8), we adapt the energy method established by Turkington in \cite{T}. However, unlike the bounded domain case in \cite{T}, in this paper  the unboundedness of the upper half-plane causes great trouble since the regular part of the Green function is no longer bounded from below. 
To overcome this difficulty, we need to show the uniform boundedness of the vortex core. This is achieved based on the idea developed by the author  in \cite{W2}. 
Of course, due to the different variational nature,  some new technical difficulties appear during this process, most of which are caused by the traveling speed.  In contrast to \cite{W2},  the vortex pairs in this paper have unknown traveling speed that depends on the  parameter $\varepsilon$.  To apply the method in \cite{W2}, some proper uniform estimates for the traveling speed should be deduced at the  beginning. To achieve this, we use a Pohozaev type identity proved by Burton in \cite{B11} and the fact that the  kinetic energy on the vortex core is uniformly bounded.  As to the limit of the traveling speed as $\varepsilon\to 0^+$, we derive a useful formula (see Lemma \ref{ffs1}) such that the traveling speed can be expressed in terms of an integral involving the vorticity.  Except for the traveling speed, some specific estimates are also different from the ones in \cite{W2} and need to be taken seriously.

Below we recall  some  closely related results in the literature and give some comments on Theorem \ref{thm1}.
In \cite{T}, p.1061, Turkington  proposed a possible method to construct  concentrated vortex  pairs with prescribed impulse, that is, to  maximize $E$ over
\[J_{a,\lambda}=\left\{v\in L^\infty(\Pi)\mid \|v\|_{L^1(\Pi)}=\kappa,\,\,I(v)=i_0,\,\,0\leq v\leq \lambda,\,\,{\rm supp}(v)\subset [-a,a]\times[0,a]\right\},\]
where $\kappa, i_0>0$ are fixed, $a>i_0/\kappa$, and $\lambda>0$ is a  parameter.    Following Turkington's idea in \cite{T}, it is not hard to prove that 
 for sufficiently large $\lambda$, depending on $\kappa,i_0$ and $a$,  there exists a maximizer, and any maximizer $\zeta $ satisfies 
\begin{equation}\label{pfrm}
\zeta =\lambda\mathbf 1_{\Omega_\zeta}
\end{equation}
for some bounded open set  $\Omega_\zeta\subset\Pi$ depending on $\zeta$. Here  $\mathbf 1 $ denotes the characteristic function. Moreover, similar asymptotic estimates as in Theorem \ref{thm1}  hold  as $\lambda\to+\infty.$
Since any maximizer has the patch form \eqref{pfrm},   maximizing $E$ relative to $J_{a,\lambda}$ is actually  equivalent  to maximizing $E$ relative to
\[K_{a,\lambda}=\left\{v=\lambda\chi_\Omega \mid  \Omega\subset \Pi,\,\,\lambda\mathcal L(\Omega)=\kappa,\,\, I(v)=i_0,\,\,{\rm supp}(v)\subset [-a,a]\times[0,a]\right\}.\]
This is nearly one special case of our maximization problem \eqref{maxim} if  $\varrho$ is chosen to be  a suitable patch function, except for an extra constraint on the supports of the admissible functions. A natural question is whether this maximization problem  results in the same solutions as \eqref{maxim} does (up to a translation in the $x_1$ direction).  To answer this question, one needs to show that the parameter $a$ has no impact on the maximization problem. Unfortunately,   according to Turkington's method in \cite{T},  the asymptotic estimates for the maximizers   inevitably    depend   on $a$, and it might even happen that the diameter of the supports the maximizers go to infinity as $a\to+\infty$. One consequence of our Theorem \ref{thm1} is that it  provides  a positive answer to this question. In fact, since all the maximizers of the maximization problem \eqref{maxim} have vanishing supports near $\bar{\mathbf x}=(0,i_0/\kappa)$, the constraint ${\rm supp}(v)\subset [-a,a]\times[0,a]$ does not make any impact if $a>i_0/\kappa$.
As a corollary, the vortex patch pairs obtained by Turkington's method are orbitally stable in the sense of item (5) in Theorem \ref{thm1}.

Another related  work is \cite{B11}, in which
Burton considered a maximization problem  related to traveling vortex pairs with prescribed rearrangement and  large impulse. More specifically, Burton studied the maximization of $E$  subject to
\[\mathcal F_{a,\alpha}=\{v\in \mathcal S_{1,\alpha}\mid {\rm supp}(v)\subset [-a,a]\times[0,a]\},\]
where  $a,\alpha $ are positive constants, and  $\mathcal S_{1,\alpha}$ is defined  in analogous to \eqref{sdef}, that is,
\begin{equation}\label{s1a}
\mathcal S_{1,\alpha}=\{v\in\mathcal R(\varrho)\mid I(v)=\alpha\}.
\end{equation}
Burton proved that for sufficiently large $\alpha$ depending only on $\varrho$, and sufficiently large $a$ depending on $\varrho$ and $\alpha,$  there exists a maximizer, and any maximizer corresponds to a traveling vortex pair with unknown speed depending on $\alpha$ and $a$. Since $\mathcal F_{a,\alpha}$ is not an invariant class of the vorticity equation \eqref{vor1},  stability analysis for the vortex pairs obtained in this way is quite complicated. To obtain stable traveling vortex pairs, Burton  \cite{B10} considered the maximization of $E$ subject to $\mathcal S_{1,\alpha}$  directly. Therein, existence and a form of orbital stability were obtained based on Douglas' work \cite{Dou} and a concentrated-compactness argument.  The  precise statements of these results will be presented in Lemma \ref{lemm1},  Section 2. An interesting problem related to  Burton's vortex pairs  in \cite{B10} is to study the asymptotic behavior as $\alpha\to+\infty$, including what these vortex pairs look like  and where they are located.  As we will see in Section 4, this problem can be easily solved as a consequence   Theorem \ref{thm1}.

In addition to the vortex pairs mentioned above, which have  prescribed impulse,  in \cite{B11} and \cite{T} the authors also studied traveling vortex pairs with prescribed   speed by studying a similar variational problem.  A stability criterion for this kind of vortex pairs was  proved in \cite{B6}. Based on the results in \cite{B6} and inspired by the method in \cite{T}, recently the author  in \cite{W2}  proved the existence of a family of concentrated stable vortex pairs with prescribed  traveling speed.
It is worth mentioning that whether these two kinds of vortex pairs (that is, one with prescribed impulse and the other with prescribed speed) are the same is still open. See \cite{B10}, p. 548 for a brief discussion.

In Theorem \ref{thm1},    existence and stability are obtained for small $\varepsilon$.  For the general case, related questions are less well understood.  However, we will show in Section 5 that for some specially chosen rearrangement class and impulse, existence and stability also hold, and the maximizers are   exactly the   Chaplygin-Lamb dipole together with its  translations parallel to the $x_1$-axis.

This paper is organized as follows. In Section 2, we provide some   preliminaries for later use. In Section 3, we give the proof of Theorem \ref{thm1}. In Section 4, we  study the  asymptotic  behavior of Burton's vortex pairs in \cite{B10} based on Theorem \ref{thm1}. In Section 5, we discuss a special non-concentrated case and prove a form of stability for the Chaplygin-Lamb dipole.

\section{Preliminaries}

In this section, we present some preliminaries that will be used in Section 3.

The first lemma summarizes what Burton proved
  in \cite{B10} regarding the maximization  of $E$ relative to $\mathcal S_{1,\alpha}$, which is defined by \eqref{s1a} in Section 1. 

\begin{lemma}\label{lemm1}
Let  $\varrho $ satisfy (H1)-(H3) in Section 1,  $\alpha>0$ be a parameter, and $\Sigma_\alpha$ be the set of maximizers of $E$  relative to
$\mathcal S_{1,\alpha}$.
 Then there exists some $\alpha_0>0,$ depending only on $\varrho,$ such that for any $\alpha>\alpha_0$ the following assertions hold.
 \begin{itemize}
 \item [(i)] $\Sigma_\alpha\neq \varnothing.$
  \item[(ii)] Every $\zeta\in \Sigma_\alpha$ has bounded support, and is Steiner-symmetric with respect to some line $x_1=c$ for some $c\in\mathbb R.$
 \item [(iii)] For every $\zeta\in \Sigma_\alpha,$ there exist some positive number $q_\zeta>0$  and some increasing function $\phi_\zeta:\mathbb R\to\mathbb R\cup\{\pm\infty\}$  such that 
 \[\zeta=\phi_\zeta(\mathcal G\zeta-q_\zeta x_2)\mbox{ \rm a.e. in }\Pi,\] 
 \[\zeta=0 \mbox{ a.e. in }\{\mathbf x\in\Pi\mid \mathcal G\zeta(\mathbf x)-q_\zeta x_2\leq 0\}.\]
 \item[(iv)] For every $\zeta\in\Sigma_{\alpha},$ $\omega(t,\mathbf x)=\zeta(x_1-q_\zeta t,x_2)-\zeta(x_1-q_\zeta t,-x_2)$ is an $L^p$-regular solution to the vorticity \eqref{vor1}.
 \item[(v)]  $\Sigma_\alpha$ is orbitally stable in the $\mathfrak X_p$ norm in the following sense: for any $\epsilon>0,$ there exists some $\delta>0$, depending only on $\varrho, \alpha, p$ and $\epsilon$, 
 such that
 for any  $L^p$ regular solution $\omega$ satisfying
 \[\omega(0,\cdot) \mbox{ \rm is nonnegative with  bounded support,}\quad \inf_{\zeta\in\Sigma_{\alpha}}\|\omega(0,\cdot)-\zeta\|_{\mathfrak X_p}<\delta,\]
 it holds that
\[\inf_{\zeta\in\Sigma_\alpha}\|\omega(t,\cdot)-\zeta\|_{\mathfrak X_p}<\epsilon\quad\forall \,t\ge 0.\]
  \end{itemize}

 \end{lemma}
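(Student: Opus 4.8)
The plan is to establish (i)--(iv) by the direct method of the calculus of variations within Burton's rearrangement framework, and then to deduce (v) as a consequence of the variational characterization together with the conservation laws built into the definition of an $L^p$-regular solution. The single recurring difficulty throughout is the non-compactness caused by the unboundedness of $\Pi$ in the $x_1$-direction, and the largeness of $\alpha$ is the device that tames it.

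For existence I would take a maximizing sequence $\{v_n\}\subset\mathcal S_{1,\alpha}$. Since every element of $\mathcal R(\varrho)$ shares the same distribution function, the sequence is nonnegative and bounded in $L^1\cap L^p(\Pi)$, so after passing to a subsequence $v_n\rightharpoonup v$ weakly in $L^p(\Pi)$. Because $\mathcal G$ is smoothing and $E(v)=\frac12\int_\Pi v\,\mathcal G v$ is a positive-definite quadratic form, $E$ is weakly continuous along such sequences, so $v$ attains $\sup_{\mathcal S_{1,\alpha}}E$; the delicate point is that a priori $v$ lies only in the weak closure $\overline{\mathcal R(\varrho)}$, and one must promote it to a genuine rearrangement. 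Here I would invoke Burton's theory: a maximizer of a convex, weakly continuous energy over the convex set $\overline{\mathcal R(\varrho)}$, subject to the linear constraint $I(v)=\alpha$, is attained at an extreme point, i.e.\ at an actual rearrangement of $\varrho$. To recover strong convergence I would run a concentration--compactness argument on the $x_1$-translated sequence $v_n(\cdot+c_n\mathbf e_1)$. Vanishing is excluded since it would force $E(v_n)\to0$, contradicting positivity of the maximal energy; dichotomy is excluded using that $\alpha$ is large: because the profile is fixed and $I=\int_\Pi x_2 v$, a large impulse forces the support far from $\partial\Pi$, so the boundary term $|\mathbf x-\bar{\mathbf y}|$ in the Green function is negligible and the self-energy of a single concentrated blob strictly dominates that of two widely separated pieces carrying the same distribution. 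This yields strong $L^p$ convergence of a translate of $v_n$, giving (i).

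For (iii) I would apply Burton's characterization of maximizers over a rearrangement class: at $\zeta\in\Sigma_\alpha$ there is a Lagrange multiplier $q_\zeta$ for the impulse constraint such that $\zeta$ is an increasing rearrangement of $\mathcal G\zeta-q_\zeta x_2$ relative to itself, which is exactly $\zeta=\phi_\zeta(\mathcal G\zeta-q_\zeta x_2)$ with $\phi_\zeta$ increasing, together with $\zeta=0$ where $\mathcal G\zeta-q_\zeta x_2\le0$; positivity $q_\zeta>0$ reflects that the impulse constraint is active and fixes the physical direction of travel. The Steiner symmetry in (ii) follows because Steiner symmetrization in the $x_1$ variable preserves the distribution of $v$ (hence keeps $v\in\mathcal R(\varrho)$) and preserves each $x_2$-slice mass (hence keeps $I(v)=\alpha$), while strictly increasing $E$ unless $v$ is already symmetric about some line $x_1=c$, this being a Riesz-type inequality for the kernel $\ln(|\mathbf x-\bar{\mathbf y}|/|\mathbf x-\mathbf y|)$. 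Boundedness of the support in (ii) comes from the Euler--Lagrange relation: on the positivity set $\mathcal G\zeta$ is bounded above while $q_\zeta x_2$ grows, forcing that set to be bounded. Item (iv) is then immediate: with $\psi:=\mathcal G\zeta-q_\zeta x_2$ the stream function in the frame moving at speed $q_\zeta$, the relation $\zeta=\phi_\zeta(\psi)$ gives $\nabla^\perp\psi\cdot\nabla\zeta=0$, so $\omega(t,\mathbf x)=\zeta(x_1-q_\zeta t,x_2)-\zeta(x_1-q_\zeta t,-x_2)$ solves \eqref{vor1}, and $E$, $I$ are conserved along the transport as required by Definition \ref{def1}.

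The hard part, which I expect to be the main obstacle, is the orbital stability (v), carried out via the standard Lyapunov argument exploiting that $E$, $I$, and the full rearrangement class of $\omega(t,\cdot)$ are preserved along any $L^p$-regular solution because the flow is measure preserving. Suppose stability fails: then there exist $\epsilon>0$ and $L^p$-regular solutions $\omega^{(n)}$ with nonnegative, compactly supported initial data converging to $\Sigma_\alpha$ in $\mathfrak X_p$, together with times $t_n$ at which $\inf_{\zeta\in\Sigma_\alpha}\|\omega^{(n)}(t_n,\cdot)-\zeta\|_{\mathfrak X_p}\ge\epsilon$. By conservation, the snapshots $\omega^{(n)}(t_n,\cdot)$ form a near-maximizing sequence whose impulse tends to $\alpha$ and whose distribution tends to that of $\varrho$; the same concentration--compactness analysis used for existence then shows that, after suitable $x_1$-translations, this sequence is relatively compact in $\mathfrak X_p$ with all limit points in $\Sigma_\alpha$, contradicting the lower bound $\epsilon$. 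The technical core is therefore a quantitative compactness statement---that any sequence whose energy converges to the maximum and whose constraints converge to the admissible values is, up to $x_1$-translation, $\mathfrak X_p$-convergent to $\Sigma_\alpha$---and it is precisely here that the unbounded geometry and the need to track both the $L^p$ profile and the impulse simultaneously make the argument delicate. This is exactly what Burton establishes in \cite{B10} through Douglas' framework \cite{Dou} and a concentration--compactness argument.
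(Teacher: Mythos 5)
Your proposal is correct in outline and follows essentially the same route as the paper: the paper's proof of this lemma consists entirely of citations (items (i)--(iii) to Lemmas 9 and 10 of \cite{B10}, item (iv) to Section 5 of \cite{B11}, and item (v) to Theorem 2 of \cite{B10}), and your sketch is a faithful reconstruction of the arguments behind those cited results, ending with the same appeal to Burton's Theorem 2. The one imprecision --- asserting that the full rearrangement class of $\omega(t,\cdot)$ is preserved for a general $L^p$-regular solution, which need not be transported by a measure-preserving flow --- is exactly the subtlety Burton's Theorem 2 absorbs by working with the weak closure of the rearrangement class, so it does not affect the conclusion.
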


 \begin{proof}
 The items (i)-(iii) follow from Lemmas 9 and 10  in \cite{B10}, (iv) follows from Section 5 in \cite{B11}, and (v) follows from Theorem 2 in \cite{B10}.
 \end{proof}

\begin{remark}
In \cite{B10}, Burton in fact proved that there exist    $A_{\alpha}, B_\alpha>0$ such that 
\[{\rm supp}(\zeta)\subset [-A_{ \alpha},A_{ \alpha}]\times[0,B_{ \alpha}]\quad \forall\,\zeta\in \Sigma_\alpha.\]
However, how $A_\alpha$ and $B_\alpha$ depend  on $\alpha$  is not  specified therein.   As we will see in Section 3, this issue is  crucial  for the proof of Theorem \ref{thm1}. 
\end{remark}

\begin{lemma}\label{lemm2}
Let $1<s<+\infty$ and $0<\theta<1.$ Then there are positive numbers $c_1,c_2,c_3$, depending only on $s$ and $\theta$, such that for any $v$ satisfying
\[v\in L^s(\Pi),\quad v(\mathbf x)\geq 0 \,\,\mbox{ \rm a.e. }\mathbf x\in\Pi, \quad  I(v)<+\infty,\]
it holds that
\[\mathcal Gv(\mathbf x)\leq x_2^{-1}(c_1\ln x_2+c_2)I(v)+c_3x_2^{-\theta}\|v\|_{L^s(\Pi)}^{1-\theta}I(v)^{\theta}\quad\,\forall \,\mathbf x=(x_1,x_2)\in\Pi,\,x_2\geq 1.\]
\end{lemma}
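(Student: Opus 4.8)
The plan is to reduce everything to the explicit form of the half-plane Green kernel and then split the integral $\mathcal Gv(\mathbf x)=\int_\Pi G(\mathbf x,\mathbf y)v(\mathbf y)\,d\mathbf y$ according to the distance $r=|\mathbf x-\mathbf y|$. First I would record the identity $|\mathbf x-\bar{\mathbf y}|^2-|\mathbf x-\mathbf y|^2=4x_2y_2$, which gives the nonnegative kernel $G(\mathbf x,\mathbf y)=\frac{1}{4\pi}\ln\bigl(1+\tfrac{4x_2y_2}{r^2}\bigr)\ge 0$, together with the two elementary inequalities $\ln(1+t)\le t$ and $\ln(1+t)\le C_\beta t^\beta$ for $0<\beta<1$, to be used away from and near the diagonal respectively.

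On the far region $\{r\ge x_2/2\}$ I would apply $\ln(1+t)\le t$ to get $G(\mathbf x,\mathbf y)\le \frac{x_2y_2}{\pi r^2}\le \frac{4y_2}{\pi x_2}$, so that this part is bounded by $\frac{4}{\pi x_2}\int_\Pi y_2v\,d\mathbf y=\frac{4}{\pi x_2}I(v)$; this supplies the $c_2$-part of the first term and produces no logarithm. On the complementary near region $N=\{r<x_2/2\}$ one automatically has $y_2>x_2/2$, hence the crucial pointwise comparison $v\le \frac{2}{x_2}y_2v$, and moreover, using $y_2<\tfrac32 x_2$ and $r<\tfrac12 x_2$, the kernel obeys $G(\mathbf x,\mathbf y)\le \frac{1}{2\pi}\ln\frac{x_2}{r}+C$.

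On $N$ I would introduce a cutoff radius $\rho=x_2^{-\sigma}$ with a small fixed $\sigma>0$ and split once more. On the annulus $\{\rho<r<x_2/2\}$ the kernel is bounded by $\frac{1}{2\pi}\ln\frac{x_2}{\rho}+C\le C'\ln x_2$, and using $\int v\le \frac{2}{x_2}\int y_2v\le \frac{2}{x_2}I(v)$ this contributes a term of size $\frac{C}{x_2}(\ln x_2)\,I(v)$, i.e. the $c_1\ln x_2$ part of the first term. On the inner disk $\{r\le\rho\}$ I would combine the interpolation $v\le(\frac{2}{x_2})^\theta v^{1-\theta}(y_2v)^\theta$ with the three-exponent Hölder inequality, choosing $a=s'/(1-\theta)$, $b=s/(1-\theta)$, $c=1/\theta$ (so that $\frac1a+\frac1b+\frac1c=1$), to reach a bound $\le (\frac{2}{x_2})^\theta\|G(\mathbf x,\cdot)\|_{L^a(\{r\le\rho\})}\|v\|_{L^s}^{1-\theta}I(v)^\theta$. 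Since the logarithmic kernel lies in $L^a_{\mathrm{loc}}$ for every finite $a$, a direct computation gives $\|G(\mathbf x,\cdot)\|_{L^a(\{r\le\rho\})}\le C\rho^{2/a}\ln\frac{x_2}{\rho}$, and for $\rho=x_2^{-\sigma}$ the prefactor $x_2^{-\theta}\rho^{2/a}\ln\frac{x_2}\rho$ stays bounded by $Cx_2^{-\theta}$ uniformly in $x_2\ge1$; this is exactly the second term $c_3x_2^{-\theta}\|v\|_{L^s}^{1-\theta}I(v)^\theta$.

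Summing the three contributions yields the stated inequality with $c_1,c_2,c_3$ depending only on $s$ and $\theta$ (through $a$, the fixed $\sigma$, and the universal constants in the Hölder and kernel estimates). I expect the main obstacle to be the interplay between the logarithmic singularity of $G$ and the impulse weight $y_2$: one must extract the sharp decay $x_2^{-\theta}$ while the logarithm forces an unavoidable $x_2^{-1}\ln x_2$ loss, and the entire point of taking $\rho$ to be a negative power of $x_2$ is to confine the logarithmic factor to the impulse-linear term and keep the interpolation term clean. A minor technical point, easily absorbed into the constants, is the range $1\le x_2\le 2$, where $\rho$ must be capped at $x_2/2$ and all quantities are comparable.
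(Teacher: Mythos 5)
Your proposal is correct, but it is worth pointing out that the paper does not actually prove this lemma at all: its entire proof is the one-line citation ``It follows from Lemma 5 in \cite{B10}'', so you have supplied a self-contained derivation where the paper supplies none. Every step I checked goes through: the identity $|\mathbf x-\bar{\mathbf y}|^2-|\mathbf x-\mathbf y|^2=4x_2y_2$ gives the nonnegative kernel $\frac{1}{4\pi}\ln\bigl(1+4x_2y_2/r^2\bigr)$; on $\{r\ge x_2/2\}$ the bound $\ln(1+t)\le t$ yields exactly $\frac{4}{\pi x_2}I(v)$; on $\{r<x_2/2\}$ the automatic inequalities $x_2/2<y_2<3x_2/2$ justify both the pointwise comparison $v\le (2y_2/x_2)\,v$ and the kernel bound $\frac{1}{2\pi}\ln(x_2/r)+C$; and the three-exponent H\"older step with $a=s'/(1-\theta)$, $b=s/(1-\theta)$, $c=1/\theta$ (exponents do sum to $1$) correctly produces $\|v\|_{L^s}^{1-\theta}I(v)^{\theta}$, with the prefactor $x_2^{-\theta}\rho^{2/a}\bigl(\ln(x_2/\rho)+1\bigr)$ bounded by $Cx_2^{-\theta}$ uniformly for $x_2\ge 2$ when $\rho=x_2^{-\sigma}$. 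Two cosmetic remarks: the inequality $\ln(1+t)\le C_\beta t^\beta$ announced at the outset is never actually used, and the $L^a$ norm of the kernel on $\{r\le\rho\}$ is $C\rho^{2/a}\bigl(\ln(x_2/\rho)+1\bigr)$ rather than $C\rho^{2/a}\ln(x_2/\rho)$, though the extra constant is harmless there since $\ln(x_2/\rho)\ge\ln 2$. You also correctly dispose of the range $1\le x_2\le 2$ by capping $\rho$ at $x_2/2$. In short, your argument is a valid proof of the lemma and is essentially the kind of kernel-splitting argument that Burton's cited Lemma 5 rests on; its only advantage over the paper's treatment is that it makes the result independent of \cite{B10}.
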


\begin{proof}
It follows from  Lemma 5 in \cite{B10}.
\end{proof}

\begin{lemma}\label{lemm3}
Let $2<s<+\infty.$ Then there exists a positive number $K$, depending only on $s$, 
such that for any $v$ satisfying
\[v\in L^1 \cap L^s(\Pi),\quad v(\mathbf x)\geq 0 \,\,\mbox{ \rm a.e. }\mathbf x\in\Pi, \quad  v\mbox{  \rm  is Steiner-symmetric in the $x_2$-axis},\]
it holds for any $\mathbf x=(x_1,x_2)\in\Pi$ with $|x_1|\geq 1.$ that
\[\mathcal Gv(\mathbf x)\leq K\left(I(v)+\|v\|_{L^1(\Pi)}+\|v\|_{L^s(\Pi)}\right)x_2|x_1|^{-1/(2s)}.\]
\end{lemma}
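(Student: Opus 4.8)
The plan is to reduce everything to the explicit half-plane kernel and then play the near-diagonal singularity against the horizontal decay forced by Steiner symmetry. Write $R:=|x_1|\ge 1$. Using $|\mathbf x-\bar{\mathbf y}|^2-|\mathbf x-\mathbf y|^2=4x_2y_2$ one has the identity
\[\mathcal Gv(\mathbf x)=\frac{1}{4\pi}\int_\Pi\ln\!\Big(1+\frac{4x_2y_2}{|\mathbf x-\mathbf y|^2}\Big)v(\mathbf y)\,d\mathbf y,\]
and I would record two elementary bounds for the integrand: the linearization $\ln(1+t)\le t$, giving the far-field kernel $\mathcal Gv(\mathbf x)\le \frac{x_2}{\pi}\int_\Pi \frac{y_2\,v(\mathbf y)}{|\mathbf x-\mathbf y|^2}\,d\mathbf y$, which is linear in $x_2$, and the fractional bound $\ln(1+t)\le C_\theta t^\theta$ with the choice $\theta=\tfrac12$, which replaces the kernel by $|\mathbf x-\mathbf y|^{-1}$. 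The point of $\theta=\tfrac12$ is that $|\mathbf x-\mathbf y|^{-1}\in L^{s'}_{\mathrm{loc}}(\mathbb R^2)$ precisely when $s'=s/(s-1)<2$, i.e.\ exactly when $s>2$; this is where the hypothesis $s>2$ is used.

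Next I would extract the horizontal decay from Steiner symmetry. Since $v(\cdot,y_2)$ is even and nonincreasing in $|y_1|$ for a.e.\ $y_2$, for $|y_1|\ge R/2$ the monotonicity gives $R\,v(y_1,y_2)\le \int_{-R/2}^{R/2}v(t,y_2)\,dt\le m(y_2)$, where $m(y_2):=\int_{\mathbb R}v(t,y_2)\,dt$, hence the pointwise decay $v(y_1,y_2)\le m(y_2)/R\le \|v\|_{L^1(\Pi)}/R$ on $\{|y_1|\ge R/2\}$. Interpolating this $L^\infty$-type bound against $\|v\|_{L^s}$ also boosts integrability: $\|v\|_{L^s(\{|y_1|\ge R/2\})}\lesssim \|v\|_{L^1(\Pi)}\,R^{-(s-1)/s}$. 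These two consequences of Steiner symmetry are the sole source of the negative power of $R$.

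The main estimate then splits $\Pi=A\cup B$ with $A=\{|y_1|\le R/2\}$ and $B=\{|y_1|>R/2\}$. On $A$ one has $|x_1-y_1|\ge R/2$, so the linearized kernel and the impulse give a contribution $\lesssim x_2\,I(v)\,R^{-2}$. On $B$ I would split further by distance to $\mathbf x$. For the far part $\{|\mathbf x-\mathbf y|>1\}\cap B$ the linearized kernel is bounded by $y_2\,v$, the $y_1$-integral of $|\mathbf x-\mathbf y|^{-2}$ over this set is uniformly bounded, and inserting the Steiner pointwise bound produces $\lesssim x_2\,I(v)\,R^{-1}$. For the near part $\{|\mathbf x-\mathbf y|\le 1\}\cap B$ I would use the $\theta=\tfrac12$ kernel together with Hölder against $v\in L^s(B)$; finiteness of $\int_{|\mathbf x-\mathbf y|\le1}|\mathbf x-\mathbf y|^{-s'}$ rests on $s>2$, while the boosted $L^s$-bound above supplies the power of $R$. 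Collecting the three contributions and using $R\ge1$ to dominate every exponent by the single worst one, $R^{-1/(2s)}$, with the weight $x_2\,(I(v)+\|v\|_{L^1(\Pi)}+\|v\|_{L^s(\Pi)})$, yields the claim.

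The hard part is the near-diagonal region of $B$, where three competing requirements must be met at once: taming the $|\mathbf x-\mathbf y|^{-1}$ (equivalently logarithmic) singularity, which is only integrable against $L^s$ because $s>2$; keeping the dependence on $x_2$ exactly linear; and still reading off the horizontal decay from the Steiner bound. The crude bound $\ln(1+t)\le C\,t^{1/2}$ over-counts for small $x_2$ (it produces $x_2^{1/2}$ rather than $x_2$), so to obtain a genuinely linear weight one should split the kernel at the natural scale $|\mathbf x-\mathbf y|\sim\sqrt{x_2y_2}$, using the linearization $\ln(1+t)\le t$ where the argument is $\le1$ and the logarithmic bound on the inner scale. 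Once this scale-splitting is in place, all remaining steps are comparisons of powers of $R\ge1$.
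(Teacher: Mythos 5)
The paper does not prove this lemma at all: it is quoted verbatim from Burton (Lemma 5 of \cite{B10}) and the ``proof'' is a one-line citation. Your proposal is therefore a genuinely different contribution --- a self-contained argument --- and its architecture is sound: the identity $\mathcal Gv(\mathbf x)=\frac{1}{4\pi}\int_\Pi\ln\bigl(1+4x_2y_2|\mathbf x-\mathbf y|^{-2}\bigr)v\,d\mathbf y$ is correct, the Steiner-symmetry bound $v(y_1,y_2)\le m(y_2)/(2|y_1|)$ and its consequence $\|v\|_{L^s(\{|y_1|\ge R/2\})}\le\|v\|_{L^1}R^{-(s-1)/s}$ are the right (and only) source of the $R^{-1/(2s)}$ decay, the split $\{|y_1|\le R/2\}$ versus $\{|y_1|>R/2\}$ with the further near/far split works, and the exponent bookkeeping ($R^{-2}$, $R^{-1}$, $R^{-(s-1)/s}$ all dominated by $R^{-1/(2s)}$ since $s>3/2$) checks out, as does the uniform bound $\int_{\{|\mathbf x-\mathbf y|>1\}}|\mathbf x-\mathbf y|^{-2}\,dy_1\le\pi$. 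What each approach buys is clear: the citation is short but leaves the reader dependent on \cite{B10}; your argument isolates exactly where $s>2$ enters (local $L^{s'}$-integrability of the singular kernel, $s'<2$) and where Steiner symmetry enters.

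The one place where you are sketching rather than proving is the near-diagonal region, and you are right that it is the hard part; let me confirm that your proposed fix does close, while noting that it needs slightly more than ``comparisons of powers of $R$'' --- the powers of $x_2$ also have to be balanced. On the outer scale $\{|\mathbf x-\mathbf y|\ge 2\sqrt{x_2y_2}\}$ the linearized kernel $x_2y_2|\mathbf x-\mathbf y|^{-2}$ is not yet manifestly $O(x_2|\mathbf x-\mathbf y|^{-1})$; the missing observation is that there $|\mathbf x-\mathbf y|^2\ge\max\bigl((x_2-y_2)^2,\,4x_2y_2\bigr)\ge\tfrac12(x_2+y_2)^2$, whence $x_2y_2|\mathbf x-\mathbf y|^{-2}\le\sqrt2\,x_2|\mathbf x-\mathbf y|^{-1}$, and H\"older against $\|v\|_{L^s(B)}$ on the unit ball gives the linear weight. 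On the inner scale $\{|\mathbf x-\mathbf y|<2\sqrt{x_2y_2}\}$ one has $(\sqrt2-1)^2x_2<y_2<(\sqrt2+1)^2x_2$, so the region sits in a ball of radius $\min(1,Cx_2)$ and the kernel is $\le C\bigl(1+\ln_+x_2+\ln|\mathbf x-\mathbf y|^{-1}\bigr)$; H\"older then yields $C_sx_2^{2/s'}\|v\|_{L^s(B)}$ for $x_2\le1$ (here $2/s'>1$ saves you, again using $s>2$) and $C_s(1+\ln x_2)\|v\|_{L^s(B)}\le C_sx_2\|v\|_{L^s(B)}$ for $x_2\ge1$. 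With these two observations supplied, your outline is a complete and correct proof.
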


\begin{proof}
It follows from  Lemma 5 in \cite{B10}.
\end{proof}

The following lemma presents two rearrangements inequalities that will be used later. 

\begin{lemma}
Let $u,v,w$ be nonnegative Lebesgue measurable functions on $\mathbb R^2$. Then
\begin{equation}\label{rri1}
\int_{\mathbb R^2}uvdx\leq \int_{\mathbb R^2}u^*v^*dx,
\end{equation}
\begin{equation}\label{rri2}
\int_{\mathbb R^2}\int_{\mathbb R^2}u(x)v(x-y)w(y)dxdy\leq \int_{\mathbb R^2}\int_{\mathbb R^2}u^*(x)v^*(x-y)w^*(y)dxdy.
\end{equation}
Here $u^*$ denote the symmetric-decreasing rearrangement of $u$.
\end{lemma}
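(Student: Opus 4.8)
The plan is to recognize \eqref{rri1} as the Hardy--Littlewood inequality and \eqref{rri2} as the Riesz rearrangement inequality, and to reduce both, via the layer-cake (bathtub) representation, to the corresponding statements for characteristic functions of sets of finite measure. Recall that for a nonnegative measurable $u$ one has $u(\mathbf x)=\int_0^\infty \mathbf 1_{\{u>s\}}(\mathbf x)\,ds$, and that the superlevel sets of the symmetric-decreasing rearrangement satisfy $\{u^*>s\}=(\{u>s\})^*$, the open ball centered at $\mathbf 0$ with $\mathcal L(\{u^*>s\})=\mathcal L(\{u>s\})$.

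For \eqref{rri1}, I would insert the layer-cake formulas for $u$ and $v$ and apply Tonelli's theorem to obtain
\begin{equation*}
\int_{\mathbb R^2}uv\,d\mathbf x=\int_0^\infty\!\!\int_0^\infty \mathcal L\big(\{u>s\}\cap\{v>t\}\big)\,ds\,dt.
\end{equation*}
For arbitrary sets of finite measure one has the elementary bound $\mathcal L(A\cap B)\le\min\{\mathcal L(A),\mathcal L(B)\}$, whereas the superlevel sets of $u^*$ and $v^*$ are \emph{concentric} balls, so one of them contains the other and the equality $\mathcal L(A^*\cap B^*)=\min\{\mathcal L(A^*),\mathcal L(B^*)\}$ holds. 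Since rearrangement preserves the measure of each superlevel set, the integrand for $(u,v)$ is dominated pointwise in $(s,t)$ by that for $(u^*,v^*)$, and integrating gives \eqref{rri1}.

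For \eqref{rri2} the same device yields
\begin{equation*}
\int_{\mathbb R^2}\!\int_{\mathbb R^2}u(\mathbf x)v(\mathbf x-\mathbf y)w(\mathbf y)\,d\mathbf x\,d\mathbf y=\int_0^\infty\!\!\int_0^\infty\!\!\int_0^\infty \mathcal I\big(\{u>r\},\{v>s\},\{w>t\}\big)\,dr\,ds\,dt,
\end{equation*}
where $\mathcal I(A,B,C)=\int\!\int \mathbf 1_A(\mathbf x)\mathbf 1_B(\mathbf x-\mathbf y)\mathbf 1_C(\mathbf y)\,d\mathbf x\,d\mathbf y$. Thus it suffices to prove the three-set inequality $\mathcal I(A,B,C)\le\mathcal I(A^*,B^*,C^*)$ for measurable sets of finite measure. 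I would establish this by Steiner symmetrization: fixing a direction $\mathbf e$ and writing $S_{\mathbf e}$ for the Steiner symmetrization about the line through $\mathbf 0$ orthogonal to $\mathbf e$, I would disintegrate the triple integral along the family of lines parallel to $\mathbf e$, reducing the monotonicity $\mathcal I(A,B,C)\le\mathcal I(S_{\mathbf e}A,S_{\mathbf e}B,S_{\mathbf e}C)$ to a one-dimensional Riesz inequality for symmetric intervals, provable by a direct interval computation. Iterating $S_{\mathbf e}$ over a suitable sequence of directions, one has convergence of the symmetrized sets to $A^*,B^*,C^*$ (in measure, hence $\mathbf 1_{S^{(n)}A}\to\mathbf 1_{A^*}$ in $L^1$), and since $\mathcal I$ is continuous under such convergence the limit yields $\mathcal I(A,B,C)\le\mathcal I(A^*,B^*,C^*)$.

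The routine part is the layer-cake bookkeeping; the genuine obstacle is the three-set inequality underlying \eqref{rri2}. The crux is the Steiner monotonicity step, whose one-dimensional core---the fact that symmetrizing three intervals about a common center cannot decrease the convolution-type functional---requires care, as does the convergence of iterated Steiner symmetrizations to the ball. Both are classical; a self-contained treatment can be found in Lieb--Loss, \emph{Analysis}, and I would either reproduce that argument or simply cite it, noting that \eqref{rri1}--\eqref{rri2} are standard.
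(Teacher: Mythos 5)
Your proposal is correct and matches the paper, whose entire proof is a citation to \S 3.4 and \S 3.7 of Lieb--Loss: \eqref{rri1} is the Hardy--Littlewood inequality and \eqref{rri2} is the Riesz rearrangement inequality, and your layer-cake reduction plus Steiner symmetrization sketch is exactly the argument given in that reference. Citing Lieb--Loss, as you suggest at the end, is all that is needed here.
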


\begin{proof}
See \S 3.4 and \S 3.7 in Lieb--Loss's book \cite{LL}.
\end{proof}

We also need the following lemma proved by Burchard--Guo.
\begin{lemma}\label{bgu}
Let $\{u_n\}_{n=1}^{+\infty}\subset L^2(\mathbb R^2)$ such that for each $n$
\begin{equation}\label{lily}
 u_n(\mathbf x) \geq 0  \mbox{ \rm a.e. }\mathbf x\in\mathbb R^2,\quad\int_{\mathbb R^2}\mathbf x u_n(\mathbf x)d\mathbf x=\mathbf 0,\quad {\rm supp}(u_n)\subset B_{\alpha}(\mathbf 0)
 \end{equation}
for some $\alpha>0$.
If $u_n\rightharpoonup u$ and $u^*_n\rightharpoonup v$ for some $u,v\in L^2(\mathbb R^2),$ then
\[\int_{\mathbb R^2}\int_{\mathbb R^2}\ln\frac{1}{|\mathbf x-\mathbf y|}u(\mathbf x)u(\mathbf y)d\mathbf xd \mathbf y\leq \int_{\mathbb R^2}\int_{\mathbb R^2}\ln\frac{1}{|\mathbf x-\mathbf y|}v(\mathbf x)v(\mathbf y)d\mathbf xd \mathbf y.\]
Moreover, the equality holds if and only $u=v$.
\end{lemma}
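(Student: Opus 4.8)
The plan is to separate the statement into the inequality $Q(u)\le Q(v)$ and the equality case $u=v$, where I write $Q(w)=\int_{\mathbb R^2}\int_{\mathbb R^2}\ln\frac{1}{|\mathbf x-\mathbf y|}w(\mathbf x)w(\mathbf y)\,d\mathbf x\,d\mathbf y$ for the logarithmic energy. The inequality is soft; the equality case is the real content and is where I expect the difficulty to lie. I would begin by recording the bookkeeping that survives the weak limits. Since every $u_n$ and $u_n^*$ is supported in the \emph{fixed} ball $B_\alpha(\mathbf 0)$, both $\mathbf 1_{B_\alpha}$ and $\mathbf x\,\mathbf 1_{B_\alpha}$ lie in $L^2(\mathbb R^2)$; testing the two weak convergences against them, and using that rearrangement preserves the integral, gives $\int u=\int v=:m>0$ and $\int \mathbf x\,u=\mathbf 0$ (the last because $\int\mathbf x\,u_n=\mathbf 0$ by \eqref{lily}). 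It is precisely the boundedness of the supports that prevents mass from leaking in the weak limit. Moreover $v$, being a weak limit of symmetric–decreasing functions, is itself symmetric–decreasing, $v=v^*$ (rotation invariance of weak limits gives radial symmetry, and the monotonicity of the radial profiles is preserved in the limit).

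For the inequality, fix $R\ge 2\alpha$ so that $\ln\frac{R}{|\mathbf x-\mathbf y|}\ge 0$ whenever $\mathbf x,\mathbf y\in B_\alpha(\mathbf 0)$, and split $Q(w)=\int\!\int \ln\frac{R}{|\mathbf x-\mathbf y|}\,w\,w\,-\,(\ln R)\left(\int w\right)^2$. The kernel $\left(\ln\frac{R}{|\cdot|}\right)_+$ is nonnegative and symmetric–decreasing and agrees with $\ln\frac{R}{|\cdot|}$ on the relevant region, so the rearrangement inequality \eqref{rri2} applied to it, together with $\int u_n=\int u_n^*$, yields $Q(u_n)\le Q(u_n^*)$ for every $n$. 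The key point is then that $Q$ is weakly sequentially continuous on $L^2(B_\alpha)$: the integral operator $T$ with kernel $\ln\frac{1}{|\mathbf x-\mathbf y|}$ on $B_\alpha(\mathbf 0)$ is Hilbert–Schmidt, since $\int_{B_\alpha}\int_{B_\alpha}\left(\ln\frac{1}{|\mathbf x-\mathbf y|}\right)^2 d\mathbf x\,d\mathbf y<+\infty$ (the logarithmic singularity is square–integrable in the plane), hence compact. Writing $Q(w_n)-Q(w)=2\langle Tw,\,w_n-w\rangle+\langle T(w_n-w),\,w_n-w\rangle$ for any $w_n\rightharpoonup w$, the first term tends to $0$ because $Tw\in L^2(B_\alpha)$, and the second tends to $0$ by compactness of $T$. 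Applying this to $u_n\rightharpoonup u$ and to $u_n^*\rightharpoonup v$ and letting $n\to\infty$ in $Q(u_n)\le Q(u_n^*)$ gives $Q(u)\le Q(v)$.

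The equality case is the main obstacle. Suppose $Q(u)=Q(v)$. By the weak continuity just established the termwise gap vanishes, $Q(u_n^*)-Q(u_n)\to 0$, so the sequence asymptotically saturates Riesz's inequality. Here I would invoke the strict, stability form of the rearrangement inequality due to Burchard–Guo: under exactly the normalizations in \eqref{lily} — nonnegativity, common support in a fixed ball, and \emph{fixed center of mass at the origin} — asymptotic equality in Riesz's inequality forces $u_n$ toward the symmetric configuration, whence $u=v$; and conversely $u=v$ trivially gives equality. The center–of–mass condition is decisive, since equality in Riesz's inequality only determines $u$ up to a translation of $u^*$, and $\int\mathbf x\,u=\int\mathbf x\,u^*=\mathbf 0$ with $m>0$ is exactly what forces that translation to be trivial. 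Thus the entire weight of the rigidity statement rests on this quantitative rearrangement inequality; I expect that reproducing or carefully citing it — rather than the inequality $Q(u)\le Q(v)$, which follows from the Hilbert–Schmidt compactness above — is the hard part.
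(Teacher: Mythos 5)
Your proposal is correct in substance, but it is worth noting that the paper does not prove this lemma at all: its entire proof is the one-line citation ``It is a special case of Lemma 3.2 in \cite{BGu}.'' You instead give a genuine, essentially self-contained proof of the inequality: the bookkeeping of mass, center of mass, and symmetric-decreasingness of $v$ under the weak limits is right (the fixed support ball is indeed what makes $\mathbf 1_{B_\alpha}$ and $\mathbf x\mathbf 1_{B_\alpha}$ admissible test functions); the splitting $\ln\frac{1}{|\mathbf x-\mathbf y|}=\ln\frac{R}{|\mathbf x-\mathbf y|}-\ln R$ with $R\geq 2\alpha$ correctly reduces matters to the Riesz inequality \eqref{rri2} for the nonnegative symmetric-decreasing kernel $\bigl(\ln\frac{R}{|\cdot|}\bigr)_+$, using $\int u_n=\int u_n^*$ and the fact that ${\rm supp}(u_n^*)\subset B_\alpha(\mathbf 0)$ as well; and the Hilbert--Schmidt compactness argument for weak sequential continuity of $Q$ on $L^2(B_\alpha)$ is a clean and correct way to pass to the limit in $Q(u_n)\leq Q(u_n^*)$. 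For the equality case, however, you do exactly what the paper does: you defer to the Burchard--Guo stability theorem (asymptotic saturation of the Riesz deficit forces $u$ to be a translate of $v$, and the pinned center of mass with $m>0$ kills the translation). That is the correct structure and the correct identification of where the real difficulty lies, but it means the rigidity half of your argument is a citation rather than a proof, just as in the paper. Two small loose ends you should tidy: justify $m>0$ or dispose of the degenerate case $m=0$ (where $u\geq 0$ and $\int u=0$ force $u=v=0$, so the claim is trivial), and, if you want full rigor, spell out the standard argument that a weak $L^2$ limit of symmetric-decreasing functions with uniformly bounded support is symmetric-decreasing (e.g.\ by averaging over thin annuli).
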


\begin{proof}

It is a  special case of Lemma 3.2 in \cite{BGu}.
\end{proof}

\section{Proof of Theorem \ref{thm1}}

In this section, we give the proof of Theorem \ref{thm1}. 
For clarity, we divide the proof into three subsections.

\subsection{Existence and stability}

The aim of this subsection is to prove items (1)-(4) in Theorem \ref{thm1} based on Lemma \ref{lemm1}.

To begin with,  we prove the following scaling properties of $\mathcal S_{\varepsilon,i_0}$ and $ \Gamma_{\varepsilon,i_0}$ (see Section 1 for their definitions).

\begin{lemma}\label{lam1}
For any function $v:\Pi \to\mathbb R$, we have 
\begin{itemize}
\item [(i)]  $v\in \mathcal S_{1,i_0/\varepsilon}$ if and only if  $v^\varepsilon\in\mathcal S_{\varepsilon,i_0};$
\item[(ii)] $v\in \Gamma_{1,i_0/\varepsilon}$ if and only if  $v^\varepsilon\in\Gamma_{\varepsilon,i_0}.$
\end{itemize}
\end{lemma}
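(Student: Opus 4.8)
The plan is to exploit the elementary scaling $v^\varepsilon(\mathbf x)=\varepsilon^{-2}v(\mathbf x/\varepsilon)$ from \eqref{scad} and to track how the three ingredients that define $\mathcal S_{\varepsilon,i_0}$ and $\Gamma_{\varepsilon,i_0}$ transform under it: membership in the rearrangement class, the impulse $I$, and the energy $E$. Everything reduces to a few changes of variables, and the whole lemma will follow once I establish that $v\mapsto v^\varepsilon$ is a bijection preserving the rearrangement class, together with the two identities $I(v^\varepsilon)=\varepsilon I(v)$ and $E(v^\varepsilon)=E(v)$.

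First I would check that $v\mapsto v^\varepsilon$ carries $\mathcal R(\varrho)$ bijectively onto $\mathcal R(\varrho^\varepsilon)$. Writing the super-level sets as $\{v^\varepsilon>s\}=\varepsilon\{v>\varepsilon^2 s\}$, dilation by $\varepsilon$ gives $\mathcal L(\{v^\varepsilon>s\})=\varepsilon^2\mathcal L(\{v>\varepsilon^2 s\})$, and the identical relation holds with $\varrho,\varrho^\varepsilon$ in place of $v,v^\varepsilon$; comparing the two at the level $\varepsilon^2 s$ shows $v\in\mathcal R(\varrho)$ iff $v^\varepsilon\in\mathcal R(\varrho^\varepsilon)$. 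Since $\Pi$ is invariant under positive dilations, the constraint $\mathrm{supp}(v)\subset\Pi$ is preserved as well. Next, the substitution $\mathbf x=\varepsilon\mathbf y$ in $I(v^\varepsilon)=\int_\Pi x_2\,\varepsilon^{-2}v(\mathbf x/\varepsilon)\,d\mathbf x$ contributes a factor $\varepsilon$ from $x_2=\varepsilon y_2$, the amplitude factor $\varepsilon^{-2}$, and the Jacobian $\varepsilon^2$, for a net power $\varepsilon^{1-2+2}=\varepsilon$; hence $I(v^\varepsilon)=\varepsilon I(v)$. Therefore $I(v^\varepsilon)=i_0$ precisely when $I(v)=i_0/\varepsilon$, and combining this with the rearrangement statement (recall $\varrho^1=\varrho$, so the subscript $1$ refers to $\mathcal R(\varrho)$) yields (i).

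The crux is the scale invariance of $E$, and I expect this to be the only step needing genuine care. The point is that the half-plane kernel $K(\mathbf x,\mathbf y)=\ln\bigl(|\mathbf x-\bar{\mathbf y}|/|\mathbf x-\mathbf y|\bigr)$ appearing in $\mathcal G$ is homogeneous of degree $0$: since reflection commutes with dilation, $K(\varepsilon\mathbf x,\varepsilon\mathbf y)=\ln\bigl(\varepsilon|\mathbf x-\bar{\mathbf y}|/(\varepsilon|\mathbf x-\mathbf y|)\bigr)=K(\mathbf x,\mathbf y)$, the two factors of $\varepsilon$ cancelling inside the logarithm. Substituting $\mathbf x=\varepsilon\mathbf u$, $\mathbf y=\varepsilon\mathbf w$ in the double integral $2E(v^\varepsilon)=\frac{1}{2\pi}\int_\Pi\int_\Pi K(\mathbf x,\mathbf y)\,v^\varepsilon(\mathbf x)v^\varepsilon(\mathbf y)\,d\mathbf x\,d\mathbf y$, the Jacobian factors $\varepsilon^4$ cancel the two amplitude factors $\varepsilon^{-2}$, while $K$ is untouched, so $E(v^\varepsilon)=E(v)$. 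Equivalently, one may first record the pointwise identity $\mathcal G v^\varepsilon(\mathbf x)=\mathcal G v(\mathbf x/\varepsilon)$ and feed it into the definition of $E$.

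With these in hand, (ii) is immediate: by (i) the map $v\mapsto v^\varepsilon$ is a bijection between $\mathcal S_{1,i_0/\varepsilon}$ and $\mathcal S_{\varepsilon,i_0}$, and since it preserves $E$ it carries supremum to supremum, giving $M_{1,i_0/\varepsilon}=M_{\varepsilon,i_0}$ and maximizers to maximizers; thus $v\in\Gamma_{1,i_0/\varepsilon}$ iff $v^\varepsilon\in\Gamma_{\varepsilon,i_0}$. No serious obstacle is anticipated beyond the bookkeeping of powers of $\varepsilon$, the essential structural fact being that the degree-$0$ homogeneity of the half-plane Green kernel is exactly what makes $E$ scale-invariant while $I$ scales linearly.
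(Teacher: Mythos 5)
Your proposal is correct and follows essentially the same route as the paper: change of variables to show that $v\mapsto v^\varepsilon$ is a measure-preserving bijection between the rearrangement classes, the identity $I(v^\varepsilon)=\varepsilon I(v)$, and the scale invariance $E(v^\varepsilon)=E(v)$ coming from the degree-$0$ homogeneity of the half-plane kernel. The only difference is that you spell out the level-set computation for the rearrangement classes, which the paper treats as immediate.
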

\begin{proof}
By  definition, $v\in\mathcal S_{1,i_0/\varepsilon}$ if and only if $v\in\mathcal R(\varrho)$ and $I(v)=i_0/\varepsilon.$ It is clear that $v\in\mathcal R(\varrho)$ if and only if $v^\varepsilon\in\mathcal R(\varrho^\varepsilon)$. Besides, since
\[I(v^{ \varepsilon})=\int_\Pi x_2v^{ \varepsilon}(\mathbf x)d\mathbf x=\frac{1}{\varepsilon^2}\int_\Pi x_2v\left(\frac{\mathbf x}{\varepsilon}\right)d\mathbf x= {\varepsilon  }\int_\Pi x_2v(\mathbf x)d\mathbf x= \varepsilon I(v),\]
we see that $I(v)=i_0/\varepsilon$ if and only if $I(v^\varepsilon)=i_0$.  Hence (i) is proved.

To prove (ii), we first claim that
\begin{equation}\label{xdd}
E(v^\varepsilon)=E(v)\quad\forall\,v\in \mathcal S_{1,i_0/\varepsilon}.
\end{equation}
In fact, by change of variables we have
\begin{align*}
E(v^\varepsilon)&=\frac{1}{4\pi}\int_\Pi\int_\Pi\ln\frac{|\mathbf x-\bar{\mathbf y}|}{|\mathbf x-\mathbf y|}\frac{1}{\varepsilon^2}v\left(\frac{\mathbf x}{\varepsilon}\right)\frac{1}{\varepsilon^2}v\left(\frac{\mathbf y}{\varepsilon}\right)d\mathbf xd\mathbf y\\
&=\frac{1}{4\pi}\int_\Pi\int_\Pi\ln\frac{|\mathbf x-\bar{\mathbf y}|}{|\mathbf x-\mathbf y|}v\left({\mathbf x}\right)v\left({\mathbf y}\right)d\mathbf xd\mathbf y\\
&=E(v).
\end{align*}
Now (ii) is an easy consequence of (i) and \eqref{xdd}.
\end{proof}

\begin{proposition}\label{prop1}
There exists some $\varepsilon_0>0$, depending only on $\varrho$ and $i_0$, such that for any $0<\varepsilon<\varepsilon_0,$ the assertions (1)-(5) in Theorem \ref{thm1} hold.
\end{proposition}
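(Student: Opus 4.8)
The plan is to reduce everything to Burton's Lemma \ref{lemm1} through the scaling correspondence of Lemma \ref{lam1}. Let $\alpha_0>0$ be the threshold from Lemma \ref{lemm1} (depending only on $\varrho$) and set $\varepsilon_0=i_0/\alpha_0$, which depends only on $\varrho$ and $i_0$. Then $0<\varepsilon<\varepsilon_0$ is equivalent to $i_0/\varepsilon>\alpha_0$, so Lemma \ref{lemm1} applies to $\Sigma_{i_0/\varepsilon}=\Gamma_{1,i_0/\varepsilon}$. By Lemma \ref{lam1}(ii) the map $v\mapsto v^\varepsilon$ is a bijection from $\Sigma_{i_0/\varepsilon}$ onto $\Gamma_{\varepsilon,i_0}$, so I would write every $\zeta\in\Gamma_{\varepsilon,i_0}$ as $\zeta=v^\varepsilon$ with $v\in\Sigma_{i_0/\varepsilon}$ and transfer each property. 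The one computation underlying the transfer is the covariance of the Green operator under dilation: changing variables exactly as in the proof of Lemma \ref{lam1} gives $\mathcal Gv^\varepsilon(\mathbf x)=\mathcal Gv(\mathbf x/\varepsilon)$ for all $\mathbf x\in\Pi$.

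Items (1)--(3) then follow directly. Nonemptiness (1) is immediate from $\Sigma_{i_0/\varepsilon}\neq\varnothing$ (Lemma \ref{lemm1}(i)) and the bijection. For (2), if $v$ has bounded support and is Steiner-symmetric about $x_1=c$, then $\zeta=v^\varepsilon$ has bounded support and is Steiner-symmetric about $x_1=\varepsilon c$, since the spatial dilation preserves the symmetric-decreasing structure in each horizontal slice. For (3), insert $\mathcal Gv^\varepsilon(\mathbf x)=\mathcal Gv(\mathbf x/\varepsilon)$ into the profile relation $v=\phi_v(\mathcal Gv-q_vx_2)$ from Lemma \ref{lemm1}(iii); writing $\zeta(\mathbf x)=\varepsilon^{-2}v(\mathbf x/\varepsilon)$ yields
\[
\zeta(\mathbf x)=\varepsilon^{-2}\phi_v\!\left(\mathcal G\zeta(\mathbf x)-\tfrac{q_v}{\varepsilon}x_2\right),
\]
so setting $\lambda_\zeta=q_v/\varepsilon>0$ and $f_\zeta=\varepsilon^{-2}\phi_v$ (still increasing) gives the stated relation, while the set $\{\mathcal G\zeta-\lambda_\zeta x_2\le0\}$ is the $\varepsilon$-dilate of $\{\mathcal Gv-q_vx_2\le0\}$, on which $v=0$.

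For (4), the function $\zeta\in\Gamma_{\varepsilon,i_0}$ is nonnegative, lies in $L^1\cap L^p(\Pi)$, has bounded support, and satisfies the profile relation of (3); hence the argument proving Lemma \ref{lemm1}(iv), namely Section 5 of \cite{B11}, applies verbatim and shows that the odd reflection traveling at speed $\lambda_\zeta$ is an $L^p$-regular solution.

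I expect the \emph{main obstacle} to be item (5), since stability is a statement about the \emph{flow}, not just the profiles, and the $\mathfrak X_p$ norm does not scale homogeneously. Here I would use the inverse dilation $\mathcal Tw(\mathbf x)=\varepsilon^2w(\varepsilon\mathbf x)$, which satisfies $\mathcal T(v^\varepsilon)=v$ and hence maps $\Gamma_{\varepsilon,i_0}$ bijectively onto $\Sigma_{i_0/\varepsilon}$. A direct change of variables gives $\|\mathcal Tw\|_{L^1(\Pi)}=\|w\|_{L^1(\Pi)}$, $\|\mathcal Tw\|_{L^p(\Pi)}=\varepsilon^{2-2/p}\|w\|_{L^p(\Pi)}$ and $I(\mathcal Tw)=\varepsilon^{-1}I(w)$, so for the \emph{fixed} $\varepsilon$ under consideration there are constants $c(\varepsilon),C(\varepsilon)>0$ with $c(\varepsilon)\|w\|_{\mathfrak X_p}\le\|\mathcal Tw\|_{\mathfrak X_p}\le C(\varepsilon)\|w\|_{\mathfrak X_p}$, and likewise for $\mathcal T^{-1}$. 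Given an $L^p$-regular solution $\omega$ close to $\Gamma_{\varepsilon,i_0}$, I would pass to $\Omega(s,\cdot)=\mathcal T[\omega(\varepsilon^2 s,\cdot)]$; because the Euler flow in $\Pi$ is covariant under the space-time dilation $\omega(t,\mathbf x)\mapsto\varepsilon^{-2}\omega(\varepsilon^{-2}t,\mathbf x/\varepsilon)$, this $\Omega$ is again an $L^p$-regular solution, with $\Omega(0,\cdot)$ nonnegative of bounded support. Since $\mathcal T$ is linear and $\mathcal T\Gamma_{\varepsilon,i_0}=\Sigma_{i_0/\varepsilon}$, the two-sided norm equivalence converts closeness of $\omega(0,\cdot)$ to $\Gamma_{\varepsilon,i_0}$ into closeness of $\Omega(0,\cdot)$ to $\Sigma_{i_0/\varepsilon}$ and back; applying the stability of Lemma \ref{lemm1}(v) to $\Omega$ and tracking the fixed constants $c(\varepsilon),C(\varepsilon)$ through the estimate yields the required $\delta$ for any prescribed $\epsilon$. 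The only delicate point is verifying that $\mathcal T$ genuinely sends $L^p$-regular solutions to $L^p$-regular solutions, i.e.\ preserves both the distributional formulation and the conservation of $E$ and $I$ (the conservation following from the scale invariance $E(\mathcal Tw)=E(w)$ already recorded in Lemma \ref{lam1} and the homogeneity of $I$); this is exactly the content of the space-time covariance used above.
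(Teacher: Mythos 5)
Your proposal is correct and follows essentially the same route as the paper: the paper's proof simply takes $\alpha=i_0/\varepsilon$ in Lemma \ref{lemm1} and invokes the scaling correspondence of Lemma \ref{lam1}(ii), exactly as you do. You supply the details that the paper leaves implicit (the dilation covariance $\mathcal Gv^\varepsilon(\mathbf x)=\mathcal Gv(\mathbf x/\varepsilon)$, the rescaled multiplier $\lambda_\zeta=q_v/\varepsilon$, and the space--time dilation argument transferring the stability statement for fixed $\varepsilon$), all of which are sound.
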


\begin{proof}
By choosing   $\alpha=i_0/\varepsilon$ in Lemma \ref{lemm1}, we see that  for sufficiently small $\varepsilon$ the items (i)-(v)  in Lemma \ref{lemm1} hold with $\Sigma_\alpha$ replaced by $\Gamma_{1,i_0/\varepsilon}$. Then the assertions (1)-(5) in Theorem \ref{thm1}  follows immediately from (ii) in Lemma \ref{lam1}.

\end{proof}

From now on, we always assume that $\varepsilon\in(0,\varepsilon_0)$ with $\varepsilon_0$ determined in Proposition \ref{prop1}.

By   Proposition \ref{prop1}, for any
$\zeta\in\Gamma_{\varepsilon,i_0}$ there exist  some  $\lambda_\zeta>0$ and  some increasing function $f_\zeta:\mathbb R\to\mathbb R\cup\{\pm\infty\}$ such that
\begin{equation}\label{fv1}
\zeta=f_\zeta(\mathcal G\zeta-\lambda_\zeta x_2) \quad\mbox{a.e. in $\Pi$, }
\end{equation}
\begin{equation}\label{fv2}
 \zeta=0 \quad \mbox{a.e. in  } \{\mathbf x\in\Pi\mid \mathcal G\zeta(\mathbf x)-\lambda_\zeta x_2\leq 0\}.
\end{equation}
From \eqref{fv2}, we get
\begin{equation}\label{fvan}
f_\zeta(s)=0  \quad \forall\,s\in(-\infty,0].
\end{equation}
Define 
\begin{equation}\label{fvasg}
\sigma_{\zeta}=\sup_{x\in\Pi}(\mathcal G\zeta(x)-\lambda_\zeta x_2).
\end{equation}
Then it is easy to see that $f_\zeta(s)\in \mathbb R$ if $s<\sigma_\zeta$. But it may happen that 
\begin{equation}\label{fvasg2}
\lim_{s\to\sigma_\zeta^-}f_\zeta(s)=+\infty.
\end{equation}

 Define the Lagrangian multiplier $\mu_\zeta$ related to $\zeta$ as follows
 \begin{equation}\label{lmp}
 \mu_\zeta=\inf\{s\in\mathbb R\mid f_\zeta(s)>0\}.
 \end{equation}
 Then by \eqref{fvan} it holds that
 \begin{equation}\label{fvafn}
\mu_\zeta\geq 0.
\end{equation}
Following the argument as in \cite{W2}, Lemma 3.2, it is easy to check that 
 \begin{equation}\label{lesss}
V_\zeta=\{\mathbf x\in\Pi\mid \mathcal G\zeta(\mathbf x)-\lambda_\zeta x_2>\mu_\zeta\}.
\end{equation}
Here $V_\zeta$ is the vortex core related to $\zeta$ defined in Theorem \ref{thm1}.

 \subsection{Uniform estimates}
Now we deduce some uniform estimates  independent of the parameter $\varepsilon$ for the maximizers obtained in Proposition \ref{prop1}. 
In particular, we will show that all the maximizers  are  supported in some bounded region  not depending on $\varepsilon.$ 

For convenience, below we  use $C_1,C_2,C_3\cdot\cdot\cdot$ to denote positive numbers depending   on  $\varrho$ and $i_0$, but not on $\varepsilon.$

We begin with the following basic energy estimate.

\begin{lemma}\label{lam2}
There exists some $C_1>0$ such that
\begin{equation}\label{tc1u}
E(\zeta)\geq -\frac{\kappa^2}{4\pi}\ln\varepsilon-C_1\quad \forall\,\zeta\in\Gamma_{\varepsilon,i_0}.
\end{equation}
\end{lemma}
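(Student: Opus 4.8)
The plan is to exploit that every $\zeta\in\Gamma_{\varepsilon,i_0}$ is a \emph{maximizer}, so $E(\zeta)=M_{\varepsilon,i_0}\ge E(v)$ for \emph{any} admissible competitor $v\in\mathcal S_{\varepsilon,i_0}$. Thus it suffices to exhibit one explicit $v\in\mathcal S_{\varepsilon,i_0}$ whose energy is at least $-\frac{\kappa^2}{4\pi}\ln\varepsilon-C_1$. The natural candidate is a rigid translate of the concentrated profile placed at the target vortex center, namely
\[
v(\mathbf x)=\varrho^\varepsilon(\mathbf x-\hat{\mathbf x}),\qquad \hat{\mathbf x}=(0,i_0/\kappa).
\]
First I would check admissibility. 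A translate has the same distribution function as $\varrho^\varepsilon$, so $v\in\mathcal R(\varrho^\varepsilon)$; since $\varrho^\varepsilon$ is radially symmetric its first moment vanishes, giving $I(v)=\int_\Pi x_2\,\varrho^\varepsilon(\mathbf x-\hat{\mathbf x})\,d\mathbf x=(i_0/\kappa)\kappa=i_0$; and because $\mathrm{supp}(\varrho^\varepsilon)=B_{r\varepsilon}(\mathbf 0)$, the support of $v$ lies in $\Pi$ as soon as $r\varepsilon<i_0/\kappa$, which we arrange by shrinking $\varepsilon_0$. Hence $v\in\mathcal S_{\varepsilon,i_0}$ for small $\varepsilon$.

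Next I would compute $E(v)$ by splitting the kernel of $\mathcal G$ into its singular and image parts, writing
\[
E(v)=\frac{1}{4\pi}\int_\Pi\!\int_\Pi\ln\frac{1}{|\mathbf x-\mathbf y|}\,v(\mathbf x)v(\mathbf y)\,d\mathbf xd\mathbf y+\frac{1}{4\pi}\int_\Pi\!\int_\Pi\ln|\mathbf x-\bar{\mathbf y}|\,v(\mathbf x)v(\mathbf y)\,d\mathbf xd\mathbf y.
\]
For the singular part I would substitute $\mathbf x=\hat{\mathbf x}+\varepsilon\mathbf x'$, $\mathbf y=\hat{\mathbf x}+\varepsilon\mathbf y'$ and use $v(\hat{\mathbf x}+\varepsilon\mathbf x')=\varepsilon^{-2}\varrho(\mathbf x')$; the factor $\ln\frac{1}{\varepsilon|\mathbf x'-\mathbf y'|}$ separates into the constant $-\ln\varepsilon$, which integrates against the mass $\kappa^2$ to produce $-\frac{\kappa^2}{4\pi}\ln\varepsilon$, plus the scale-free double integral $\frac{1}{4\pi}\int\!\int\ln\frac{1}{|\mathbf x'-\mathbf y'|}\varrho(\mathbf x')\varrho(\mathbf y')\,d\mathbf x'd\mathbf y'$. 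Since $\mathrm{supp}(\varrho)=B_r(\mathbf 0)$ forces $|\mathbf x'-\mathbf y'|\le 2r$, this last integral is bounded below by $\frac{\kappa^2}{4\pi}\ln\frac{1}{2r}$, a constant depending only on $\varrho$. For the image part, both $\mathrm{supp}(v)$ and its reflection concentrate near $\hat{\mathbf x}$ and $(0,-i_0/\kappa)$, so $|\mathbf x-\bar{\mathbf y}|$ stays in a fixed compact subinterval of $(0,\infty)$ (essentially $2i_0/\kappa$); hence $\ln|\mathbf x-\bar{\mathbf y}|$ is bounded below and this term is $\ge -C$ with $C$ depending only on $\varrho$ and $i_0$.

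Combining the two estimates yields $E(v)\ge -\frac{\kappa^2}{4\pi}\ln\varepsilon-C_1$ with $C_1=C_1(\varrho,i_0)$, and then $E(\zeta)=M_{\varepsilon,i_0}\ge E(v)$ finishes the proof. This argument is essentially routine; the only points requiring care are verifying admissibility of the competitor (which forces the constraint $r\varepsilon<i_0/\kappa$ determining how small $\varepsilon_0$ must be) and keeping track that the leftover constants depend only on $\varrho$ and $i_0$, not on $\varepsilon$. There is no serious analytic obstacle: the whole content is that the logarithmic self-energy of a profile concentrating at scale $\varepsilon$ blows up like $-\frac{\kappa^2}{4\pi}\ln\varepsilon$, and a single well-chosen translate already realizes this rate, so the maximal energy can be no smaller.
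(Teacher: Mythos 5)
Your proposal is correct and follows essentially the same route as the paper: the paper also takes the competitor $\bar v(\mathbf x)=\varrho^\varepsilon(\mathbf x-\hat{\mathbf x})$, verifies $\bar v\mathbf 1_\Pi\in\mathcal S_{\varepsilon,i_0}$ under a smallness condition on $\varepsilon_0$ (it uses $\varepsilon_0<i_0/(4r\kappa)$), and invokes $E(\zeta)\ge E(\bar v\mathbf 1_\Pi)$. The only cosmetic difference is that the paper bounds the combined kernel $\ln\frac{|\mathbf x-\bar{\mathbf y}|}{|\mathbf x-\mathbf y|}\ge\ln\frac{i_0}{2r\kappa\varepsilon}$ in one step rather than splitting into singular and image parts, yielding the same conclusion.
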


\begin{proof}
Without loss of generality assume that
\begin{equation}\label{wll}
0<\varepsilon_0<\frac{i_0}{4r\kappa},
\end{equation}
where $r$ is the positive number given in \eqref{dero}.
Define
\[\bar v(\mathbf x)=\varrho^\varepsilon(\mathbf x-\hat{\mathbf x}),\]
where  $\hat{\mathbf x}=(0,i_0/\kappa)$ as  in Theorem \ref{thm1}.
By \eqref{wll}, it is easy to check that  $\bar v\mathbf 1_\Pi\in\mathcal R(\varrho^\varepsilon)$. 
Moreover, by a simple calculation we have
\[I(\bar v\mathbf 1_\Pi)=\int_\Pi x_2\varrho^\varepsilon(\mathbf x-\hat{\mathbf x})d\mathbf x=\int_\Pi(x_2+\hat x_2)\varrho^\varepsilon(\mathbf x)d\mathbf x=\hat x_2\int_\Pi\varrho^\varepsilon(\mathbf x)d\mathbf x=i_0.\]
Here we used the fact that $\int_\Pi x_2\varrho^\varepsilon(\mathbf x)d\mathbf x=0$ due to the radial symmetry of $\varrho^\varepsilon$. Therefore we have obtained  $\bar v\mathbf 1_\Pi\in\mathcal S_{\varepsilon,i_0}$, which implies \[E(\zeta)\geq E(\bar v\mathbf 1_\Pi)\quad\forall\zeta\in\Gamma_{\varepsilon,i_0}.\]  To finish the proof, it suffices to take into account the  following estimate for $E(\bar v\mathbf 1_\Pi)$:
\begin{align}
E(\bar v\mathbf 1_\Pi)=&\frac{1}{4\pi}\int_{\Pi}\int_{\Pi}\ln\frac{|\mathbf x-\bar{\mathbf y}|}{|\mathbf x-\mathbf y|}\varrho^\varepsilon(\mathbf x-\hat{\mathbf x})\varrho^\varepsilon(\mathbf y-\hat{\mathbf x})d\mathbf xd\mathbf y\\
=&\frac{1}{4\pi}\int_{B_{r\varepsilon }(\hat{\mathbf x})}\int_{B_{r\varepsilon }(\hat{\mathbf x})}\ln\frac{|\mathbf x-\bar{\mathbf y}|}{|\mathbf x-\mathbf y|}\varrho^\varepsilon(\mathbf x-\hat{\mathbf x})\varrho^\varepsilon(\mathbf y-\hat{\mathbf x})d\mathbf xd\mathbf y \\
\geq &\frac{1}{4\pi}\int_{B_{r\varepsilon }(\hat{\mathbf x})}\int_{B_{r\varepsilon }(\hat{\mathbf x})}\ln\frac{i_0}{2r\kappa\varepsilon}\varrho^\varepsilon(\mathbf x-\hat{\mathbf x})\varrho^\varepsilon(\mathbf y-\hat{\mathbf x})d\mathbf xd\mathbf y \label{c9}\\
=&-\frac{\kappa^2}{4\pi}\ln\varepsilon-\frac{\kappa^2}{4\pi}\ln\frac{2r}{i_0}.
\end{align}
Note that in \eqref{c9} we used the following two facts:
\[|\mathbf x-\mathbf y|\leq 2r\varepsilon\quad\forall\,\mathbf x,\mathbf y\in B_{r\varepsilon}(\hat{\mathbf x}),\]
\[|\mathbf x-\bar{\mathbf y}|\geq |\mathbf y-\bar{\mathbf y}|-|\mathbf x-\mathbf y|\geq 2\left(\frac{i_0}{\kappa}-r\varepsilon\right)-2r\varepsilon=\frac{2i_0}{\kappa}-4r\varepsilon\geq \frac{i_0}{\kappa}\quad\forall\,\mathbf x,\mathbf y\in B_{r\varepsilon}(\hat{\mathbf x}).\]

\end{proof}

To proceed, for $\zeta\in\Gamma_{\varepsilon,i_0}$
we define
\begin{equation}\label{deot}
T_\zeta=\frac{1}{2}\int_{\Pi}\zeta(\mathcal G\zeta-\lambda_\zeta x_2-\mu_\zeta) d\mathbf x.
\end{equation}

\begin{lemma}\label{lam4}
There exists some $C_2>0$ such that 
\begin{equation}\label{tc2u}
T_\zeta\leq C_2 \quad\forall\,\zeta\in\Gamma_{\varepsilon,i_0}.
\end{equation}
\end{lemma}

\begin{proof}

Fix $\zeta\in\Gamma_{\varepsilon,i_0}$.
Denote $\phi=\mathcal G\zeta-\lambda_\zeta x_2-\mu_\zeta$ and $\phi^+=\max\{\phi,0\}.$
By \eqref{lesss}, we have
\[V_\zeta=\{\mathbf x\in\Pi\mid \phi^+(\mathbf x)>0\},\]
from which we deduce that
\[T_\zeta=\frac{1}{2}\int_\Pi\zeta \phi d\mathbf x=\frac{1}{2}\int_\Pi\zeta\phi^+ d\mathbf x.\]
By \eqref{fvafn}, 
 $\phi^+$ vanishes on $\partial \Pi$. Therefore we can apply integration by parts to get
 \begin{equation}\label{ooh1}
T_\zeta=\frac{1}{2}\int_\Pi\zeta\phi^+ d\mathbf x=\frac{1}{2}\int_\Pi|\nabla \phi^+|^2 d\mathbf x.
\end{equation}
On the other hand,  by H\"older's inequality we have
\begin{equation}\label{ii33}
\begin{split}
 2T_\zeta&=\int_\Pi\zeta \phi^+ d\mathbf x \\
 & \leq \|\zeta\|_{L^p(\Pi)}\|\phi^+ \|_{L^q(\Pi)}  \\
& =\|\varrho^\varepsilon\|_{L^p(\mathbb R^2)}\|\phi^+ \|_{L^q(\Pi)}\\
&=\varepsilon^{-\frac{2}{q}}\|\varrho\|_{L^p(\mathbb R^2)} \|\phi^+ \|_{L^q(\Pi)}.
\end{split}
 \end{equation}
 Here $q=p/(p-1)$ is the H\"older conjugate exponent of $p.$ Since   $p>2,$ we have $q<2.$ We estimate $\|\phi^+\|_{L^q(\Pi)}$ as follows
  \begin{align}
  \|\phi^+ \|_{L^q(\Pi)}&\leq \|\mathbf 1_{V_\zeta}\|_{L^{\frac{2q}{2-q}}(\Pi)}  \|\phi^+ \|_{L^2(\Pi)} \label{dwh1}\\
  &\leq S\|\mathbf 1_{V_\zeta}\|_{L^{\frac{2q}{2-q}}(\Pi)}\|\nabla\phi^+ \|_{L^1(\Pi)} \label{dwh2}\\
  &\leq S\|\mathbf 1_{V_\zeta}\|_{L^{\frac{2q}{2-q}}(\Pi)}\|\mathbf 1_{V_\zeta}\|_{L^{2}(\Pi)}\|\nabla\phi^+ \|_{L^2(\Pi)} \label{dwh3}\\
  &=S\mathcal L^{1/q}(V_\zeta) \|\nabla\phi^+ \|_{L^2(\Pi)} \\
  &= S\pi^{1/q} r^{2/q}\varepsilon^{2/q}\|\nabla\phi^+ \|_{L^2(\Pi)}. \label{dwh4}
 \end{align}
 Here  $S$ is a generic positive constant.
Note that  we used H\"older's inequality in  \eqref{dwh1} and \eqref{dwh3},   Sobolev's inequality (see Theorem 4.8 in \cite{EV} for example)  in \eqref{dwh2},   and  the following fact in  \eqref{dwh4}  
\[
\mathcal L(V_\zeta)=\mathcal L\left( \{\mathbf x\in\mathbb R^2\mid \varrho^\varepsilon(\mathbf x)>0\}\right)=\varepsilon^2\mathcal L\left( \{\mathbf x\in\mathbb R^2\mid \varrho(\mathbf x)>0\}\right)=\pi r^2\varepsilon^2.\]
Therefore we obtain
\begin{equation}\label{rhzo}
T_\zeta\leq \frac{1}{2}{S\pi^{1/q} r^{2/q}\|\varrho\|_{L^p(\mathbb R^2)}\|\nabla\phi^+ \|_{L^2(\Pi)}} .
\end{equation}
 The desired result follows from \eqref{ooh1} and \eqref{rhzo} immediately.

\end{proof}

\begin{remark}
From \eqref{ooh1}, it is easy to see that
\[T_\zeta=\frac{1}{2}\int_{V_\zeta}|\nabla\mathcal G\zeta|^2d\mathbf x.\]
Therefore  $T_\zeta$ in fact  represents the kinetic energy of the fluid on the vortex core $V_\zeta$.

\end{remark}

\begin{lemma}\label{lam6}
There exists some $C_3>0$ such that
\begin{equation}\label{tc20u}
\mu_\zeta \geq -\frac{\kappa}{2\pi}\ln\varepsilon-\frac{\lambda_\zeta i_0}{\kappa}-C_3 \quad\forall\,\zeta\in\Gamma_{\varepsilon,i_0}.
\end{equation}
\end{lemma}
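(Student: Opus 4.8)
The plan is to extract $\mu_\zeta$ from an exact algebraic identity relating the four quantities $T_\zeta$, $E(\zeta)$, the impulse, and the total mass, and then to insert the two estimates already proved in Lemmas \ref{lam2} and \ref{lam4}.

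First I would unfold the definition \eqref{deot} of $T_\zeta$ by distributing the three terms of the integrand. Since $\zeta\in\mathcal R(\varrho^\varepsilon)$ is nonnegative, its total mass is $\int_\Pi\zeta\,d\mathbf x=\|\varrho^\varepsilon\|_{L^1(\mathbb R^2)}=\kappa$, while $\int_\Pi x_2\zeta\,d\mathbf x=I(\zeta)=i_0$ and $\frac12\int_\Pi\zeta\,\mathcal G\zeta\,d\mathbf x=E(\zeta)$ by \eqref{dev1}. This yields the exact identity
\[
T_\zeta=E(\zeta)-\frac{\lambda_\zeta i_0}{2}-\frac{\mu_\zeta\kappa}{2},
\]
valid for every $\zeta\in\Gamma_{\varepsilon,i_0}$.

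Solving this identity for $\mu_\zeta$ gives $\mu_\zeta=\frac{2}{\kappa}E(\zeta)-\frac{\lambda_\zeta i_0}{\kappa}-\frac{2}{\kappa}T_\zeta$. At this point I would simply invoke the lower bound $E(\zeta)\geq-\frac{\kappa^2}{4\pi}\ln\varepsilon-C_1$ from Lemma \ref{lam2} and the upper bound $T_\zeta\leq C_2$ from Lemma \ref{lam4}: the first contributes the dominant term $-\frac{\kappa}{2\pi}\ln\varepsilon$ together with a constant $-2C_1/\kappa$, while the second contributes $-2C_2/\kappa$. Setting $C_3=2(C_1+C_2)/\kappa$, which depends only on $\varrho$ and $i_0$, produces exactly \eqref{tc20u}.

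There is no genuine obstacle here: the whole argument is the bookkeeping identity followed by a direct substitution. The only point requiring a little care is that all three normalizations ($\int_\Pi\zeta\,d\mathbf x=\kappa$, $I(\zeta)=i_0$, and the energy expression) must be applied to the same $\zeta$, and that the term $-\frac{\lambda_\zeta i_0}{\kappa}$ is deliberately left intact on the right-hand side rather than estimated, since no uniform bound on the traveling speed $\lambda_\zeta$ is yet available at this stage of the argument; such a bound is precisely what the later Pohozaev-type analysis is designed to supply.
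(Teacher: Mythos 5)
Your proof is correct and is essentially identical to the paper's: the paper also derives the identity $T_\zeta=E(\zeta)-\tfrac{1}{2}\lambda_\zeta i_0-\tfrac{1}{2}\kappa\mu_\zeta$ directly from the definition \eqref{deot} and then substitutes the bounds \eqref{tc1u} and \eqref{tc2u}. Your explicit constant $C_3=2(C_1+C_2)/\kappa$ and the remark that $\lambda_\zeta$ must be carried along untouched at this stage are both consistent with the paper's argument.
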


\begin{proof}
By the definition of $T_\zeta$ \eqref{deot}, it holds that
\[T_\zeta=E(\zeta)-\frac{1}{2}\lambda_\zeta i_0-\frac{1}{2}\kappa\mu_\zeta. \]
Then \eqref{tc20u} follows from \eqref{tc1u} and   \eqref{tc2u}.
\end{proof}

\begin{lemma}\label{lam7}
There exists some $C_4>0$ such that
\begin{equation}\label{tc3u}
\mathcal G\zeta(\mathbf x)\geq -\frac{\kappa}{2\pi}\ln\varepsilon+\lambda_\zeta x_2- \frac{\lambda_\zeta i_0 }{\kappa} -C_4\quad\forall\,\zeta\in\Gamma_{\varepsilon,i_0},\,\,\mathbf x\in V_\zeta.
\end{equation}

\end{lemma}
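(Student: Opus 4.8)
The plan is to obtain this pointwise lower bound for $\mathcal G\zeta$ on the vortex core $V_\zeta$ as an immediate consequence of two facts already in hand: the level-set characterization of $V_\zeta$ in \eqref{lesss}, and the lower bound on the Lagrange multiplier $\mu_\zeta$ established in Lemma \ref{lam6}. The point is that $\mathcal G\zeta$ is controlled from below on $V_\zeta$ precisely because $V_\zeta$ is, by definition, the superlevel set where the shifted stream function $\mathcal G\zeta-\lambda_\zeta x_2$ exceeds $\mu_\zeta$.

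Concretely, first I would fix $\zeta\in\Gamma_{\varepsilon,i_0}$ and an arbitrary point $\mathbf x\in V_\zeta$. By \eqref{lesss} this means
\[
\mathcal G\zeta(\mathbf x)-\lambda_\zeta x_2>\mu_\zeta,
\]
so that $\mathcal G\zeta(\mathbf x)>\lambda_\zeta x_2+\mu_\zeta$. Next I would insert the estimate \eqref{tc20u} of Lemma \ref{lam6}, namely
\[
\mu_\zeta\geq-\frac{\kappa}{2\pi}\ln\varepsilon-\frac{\lambda_\zeta i_0}{\kappa}-C_3,
\]
which yields
\[
\mathcal G\zeta(\mathbf x)>\lambda_\zeta x_2-\frac{\kappa}{2\pi}\ln\varepsilon-\frac{\lambda_\zeta i_0}{\kappa}-C_3.
\]
Rearranging, this is exactly the claimed inequality with the choice $C_4=C_3$, and since it holds for every $\mathbf x\in V_\zeta$ and every $\zeta\in\Gamma_{\varepsilon,i_0}$ the lemma follows. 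Note $C_3$ depends only on $\varrho$ and $i_0$, so the same is true of $C_4$, as required.

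There is no genuine obstacle in this lemma itself; the substantive work lies upstream in Lemma \ref{lam6}, where the lower bound on $\mu_\zeta$ is derived by combining the basic energy estimate $E(\zeta)\geq-\tfrac{\kappa^2}{4\pi}\ln\varepsilon-C_1$ of Lemma \ref{lam2} with the uniform core-energy bound $T_\zeta\leq C_2$ of Lemma \ref{lam4} through the identity $T_\zeta=E(\zeta)-\tfrac12\lambda_\zeta i_0-\tfrac12\kappa\mu_\zeta$. Here the present lemma merely transports that bound on $\mu_\zeta$ onto the core via the variational structure \eqref{lesss}. The value of this pointwise estimate is that it gives a uniform (in $\varepsilon$) lower bound for the stream function on $V_\zeta$, which will be the starting point for confining the vortex core to a bounded region and thereby proving the asymptotic estimates (6)–(8) of Theorem \ref{thm1}.
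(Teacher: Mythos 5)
Your proof is correct and coincides with the paper's own argument: both use the level-set characterization \eqref{lesss} of $V_\zeta$ to write $\mathcal G\zeta(\mathbf x)-\lambda_\zeta x_2>\mu_\zeta$ on the core and then substitute the lower bound \eqref{tc20u} for $\mu_\zeta$ from Lemma \ref{lam6}. Nothing further is needed.
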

\begin{proof}
For any $\zeta\in\Gamma_{\varepsilon,i_0}$ and  $\mathbf x\in V_\zeta,$ recalling  \eqref{lesss}, we have 
\[ \mathcal G\zeta(\mathbf x)-\lambda_\zeta x_2 > \mu_\zeta.\]
Then \eqref{tc3u} follows from \eqref{tc20u}.
\end{proof}

 For  $\zeta\in\Gamma_{\varepsilon,i_0}$, define
 \begin{equation}\label{u1s1}
 F_\zeta(s)=\int_{-\infty}^sf_\zeta(\tau)d\tau=\int_{\mu_\zeta}^sf_\zeta(\tau)d\tau, 
\end{equation}
where $f_\zeta$ is determined by Proposition \ref{prop1}.
Then  $F_\zeta$ is locally Lipschitz continuous in $(-\infty,\sigma_\zeta)$ and \[F_\zeta'(s)=f_\zeta(s)\mbox{  a.e.  }s\in(-\infty,\sigma_\zeta).\] Recall that $\sigma_\zeta$ is defined by \eqref{fvasg}. 
Moreover, since $F_\zeta$ vanishes on $(-\infty,\mu_\zeta],$ taking into account  
 \eqref{lesss}, we have
\begin{equation}\label{bdsp}
\{\mathbf x\in \Pi\mid F_\zeta(\mathcal G\zeta(\mathbf x)-\lambda_\zeta x_2)>0\}\subset \{\mathbf x\in \Pi\mid \mathcal G\zeta(\mathbf x)-\lambda_\zeta x_2>\mu_{\zeta}\}=V_\zeta.
\end{equation}
Consequently  $F_\zeta(\mathcal G\zeta-\lambda_\zeta x_2)$ has bounded support.

Our next step is to prove the uniform boundedness from above for the traveling speed $\lambda_\zeta$. To this end, we need the following Pohozaev type identity, which is a straightforward consequence of Lemma 9 in \cite{B11}. See also (13) in \cite{B10}, p.565.   
\begin{lemma}\label{lam79}
For any $\zeta\in \Gamma_{\varepsilon,i_0},$ the following identity holds
\begin{equation}\label{vv1}
\int_\Pi F_\zeta(\mathcal G\zeta-\lambda_\zeta x_2)d\mathbf x=\frac{\lambda_\zeta i_0}{2}.
\end{equation}
\end{lemma}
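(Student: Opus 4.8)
The plan is to establish the Pohozaev-type identity \eqref{vv1} by invoking the general result of Burton (Lemma 9 in \cite{B11}), which applies to maximizers of energy-type functionals over rearrangement classes, and then specializing it to our setting. The key point is that each $\zeta \in \Gamma_{\varepsilon,i_0}$ is a maximizer satisfying the profile equation \eqref{fv1}--\eqref{fv2}, so the stream function $\psi_\zeta := \mathcal G\zeta - \lambda_\zeta x_2$ together with the nonlinearity $f_\zeta$ and the primitive $F_\zeta$ defined in \eqref{u1s1} fit the hypotheses of a Pohozaev (dilation) identity. I would first record that $\psi_\zeta$ solves the semilinear problem $-\Delta \psi_\zeta = f_\zeta(\psi_\zeta)$ in $\Pi$ (since $-\Delta \mathcal G\zeta = \zeta = f_\zeta(\psi_\zeta)$ and $-\Delta(-\lambda_\zeta x_2) = 0$), with the compactly supported vortex core $V_\zeta$ identified in \eqref{lesss} and \eqref{bdsp}, and with the boundary behavior $\psi_\zeta^+ = 0$ on $\partial\Pi$ guaranteed by \eqref{fvafn}.

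First I would set up the dilation (Pohozaev) argument directly. Multiply the equation $-\Delta \psi_\zeta = f_\zeta(\psi_\zeta)$ by the generator of scalings $\mathbf x \cdot \nabla \psi_\zeta$ and integrate over the half-plane. The right-hand side, after an integration by parts using $F_\zeta' = f_\zeta$, produces a term involving $\int_\Pi F_\zeta(\psi_\zeta)\,\mathrm{div}(\mathbf x)\,d\mathbf x = 2\int_\Pi F_\zeta(\psi_\zeta)\,d\mathbf x$; the left-hand side produces quadratic gradient terms that largely cancel in two dimensions (the Laplacian is dilation-critical in 2D, so the bulk gradient terms combine cleanly), leaving only contributions tied to the linear part $-\lambda_\zeta x_2$. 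The crucial feature is that the linear correction $-\lambda_\zeta x_2$ contributes a term proportional to $\lambda_\zeta \int_\Pi x_2 \zeta\,d\mathbf x = \lambda_\zeta I(\zeta) = \lambda_\zeta i_0$; tracking the exact constant is what yields the factor $\lambda_\zeta i_0 / 2$ on the right side of \eqref{vv1}. The compact support of $F_\zeta(\psi_\zeta)$ established in \eqref{bdsp} ensures there are no boundary contributions at infinity, and the vanishing of $\psi_\zeta^+$ on $\partial\Pi$ kills the boundary term along the $x_1$-axis.

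Rather than re-derive this by hand, the cleaner route—and the one the statement advertises—is to cite Lemma 9 of \cite{B11} as a black box and verify its hypotheses. I would check that $\zeta$ has bounded support (item (2) of Theorem \ref{thm1}, equivalently established via \eqref{bdsp}), that $\zeta \in L^1 \cap L^p(\Pi)$ with $p > 2$ so that $\mathcal G\zeta$ has the requisite regularity, and that $\zeta = f_\zeta(\psi_\zeta)$ with $f_\zeta$ increasing and vanishing on $(-\infty, 0]$. These are precisely the structural conditions recorded in \eqref{fv1}--\eqref{fvan}. Once the hypotheses match, Burton's identity applies verbatim, and comparing normalizations (his statement versus our definitions of $E$, $I$, and $F_\zeta$) gives \eqref{vv1}.

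The main obstacle I anticipate is bookkeeping the constants and sign conventions when translating Burton's general identity into the specific form \eqref{vv1}: Burton's Lemma 9 is stated for a somewhat different functional setup, and the factor of $\tfrac12$ together with the appearance of $i_0 = I(\zeta)$ (rather than, say, the energy or the mass) depends delicately on how the scaling vector field interacts with the impulse constraint and the linear term $\lambda_\zeta x_2$. One must confirm that the term generated by the traveling-speed correction is exactly $\tfrac12 \lambda_\zeta i_0$ and not merely proportional to it, and that the quadratic gradient terms genuinely cancel in the two-dimensional scaling-critical case rather than leaving a residual energy term. Verifying that no additional boundary flux survives at the free boundary $\partial V_\zeta$ (where $\psi_\zeta = \mu_\zeta$ but $\nabla\psi_\zeta$ need not vanish) is the delicate technical point, and this is handled by the fact that $f_\zeta$ is integrated against its primitive $F_\zeta$, so the weak formulation absorbs the free-boundary terms automatically.
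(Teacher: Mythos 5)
Your proposal is correct and takes essentially the same route as the paper: the paper's entire proof of Lemma \ref{lam79} is the citation to Lemma 9 of \cite{B11} (cross-referenced with identity (13) in \cite{B10}), which is exactly your primary strategy, and the structural hypotheses you list (\eqref{fv1}--\eqref{fvan}, bounded support of $F_\zeta(\mathcal G\zeta-\lambda_\zeta x_2)$ via \eqref{bdsp}) are the right ones to check. Your supplementary dilation sketch also lands on the correct constant, since the kernel $\ln\left(|\mathbf x-\bar{\mathbf y}|/|\mathbf x-\mathbf y|\right)$ is invariant under simultaneous dilation of $\mathbf x$ and $\mathbf y$, so the only surviving term on the vorticity side is $-\lambda_\zeta I(\zeta)=-\lambda_\zeta i_0$ against $-2\int_\Pi F_\zeta$ from $\operatorname{div}(\mathbf x)=2$.
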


Based on Lemmas \ref{lam4} and   \ref{lam79}, we are able to prove

\begin{lemma}\label{lam8}
There exists some $C_5>0$ such that
\begin{equation}\label{tsu0}
\lambda_\zeta\leq C_5\quad\forall\, \zeta\in\Gamma_{\varepsilon,i_0}.
\end{equation}
\end{lemma}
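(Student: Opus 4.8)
The plan is to read $\lambda_\zeta$ directly off the Pohozaev identity of Lemma \ref{lam79} and then dominate the resulting integral by the already-controlled quantity $T_\zeta$. Rewriting \eqref{vv1} as
\[
\lambda_\zeta=\frac{2}{i_0}\int_\Pi F_\zeta(\mathcal G\zeta-\lambda_\zeta x_2)\,d\mathbf x,
\]
I see that it suffices to bound this integral from above by a constant depending only on $\varrho$ and $i_0$. So the whole task reduces to controlling $\int_\Pi F_\zeta(\mathcal G\zeta-\lambda_\zeta x_2)\,d\mathbf x$ by $T_\zeta$, for which Lemma \ref{lam4} already furnishes the uniform bound $T_\zeta\le C_2$.

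The bridge between $F_\zeta$ and $T_\zeta$ is a monotonicity inequality coming from the fact that $f_\zeta$ is increasing. Since $F_\zeta(s)=\int_{\mu_\zeta}^s f_\zeta(\tau)\,d\tau$ by \eqref{u1s1} and $f_\zeta(\tau)\le f_\zeta(s)$ for $\tau\in[\mu_\zeta,s]$, I get the pointwise estimate $F_\zeta(s)\le (s-\mu_\zeta)f_\zeta(s)$ for $s>\mu_\zeta$. Substituting $s=\mathcal G\zeta(\mathbf x)-\lambda_\zeta x_2$ and invoking the profile equation \eqref{fv1}, namely $\zeta=f_\zeta(\mathcal G\zeta-\lambda_\zeta x_2)$, this reads
\[
F_\zeta(\mathcal G\zeta-\lambda_\zeta x_2)\le (\mathcal G\zeta-\lambda_\zeta x_2-\mu_\zeta)\,\zeta \qquad\text{on }V_\zeta .
\]
Off $V_\zeta$ both sides vanish: $\zeta=0$ a.e.\ there, while $F_\zeta(\mathcal G\zeta-\lambda_\zeta x_2)=0$ because $F_\zeta\equiv0$ on $(-\infty,\mu_\zeta]$ and $\mathcal G\zeta-\lambda_\zeta x_2\le\mu_\zeta$ outside $V_\zeta$ by \eqref{lesss}. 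Integrating over $\Pi$ and recalling the definition \eqref{deot} of $T_\zeta$ then gives
\[
\int_\Pi F_\zeta(\mathcal G\zeta-\lambda_\zeta x_2)\,d\mathbf x\le \int_\Pi (\mathcal G\zeta-\lambda_\zeta x_2-\mu_\zeta)\,\zeta\,d\mathbf x=2T_\zeta .
\]
Combining this with \eqref{vv1} and Lemma \ref{lam4} yields $\lambda_\zeta i_0/2\le 2T_\zeta\le 2C_2$, so $\lambda_\zeta\le 4C_2/i_0=:C_5$, which is the claim.

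I do not expect a serious obstacle here, since the argument is a short combination of two prior estimates; the only point demanding care is integrability. The profile $f_\zeta$ may blow up as $s\to\sigma_\zeta^-$ (cf.\ \eqref{fvasg2}), so the pointwise bound $(s-\mu_\zeta)f_\zeta(s)$ is not controlled on its own. This causes no difficulty, however, because the integrated right-hand side is exactly $2T_\zeta$, which is finite and bounded by $2C_2$; integrating a pointwise inequality between nonnegative functions is legitimate regardless, and the finiteness of $T_\zeta$ is supplied by Lemma \ref{lam4} together with $\zeta\in L^1\cap L^p(\Pi)$ and the boundedness of $\mathcal G\zeta-\lambda_\zeta x_2-\mu_\zeta$ on the bounded core $V_\zeta$. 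Consequently no additional hypothesis on the boundedness of $f_\zeta$ is needed, and $C_5$ depends only on $\varrho$ and $i_0$ through $C_2$ and $i_0$.
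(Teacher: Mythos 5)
Your proof is correct and follows essentially the same route as the paper: both rewrite the Pohozaev identity \eqref{vv1} as a formula for $\lambda_\zeta i_0/2$, bound $F_\zeta(s)\leq (s-\mu_\zeta)f_\zeta(s)$ using the monotonicity of $f_\zeta$, identify the resulting integral with $2T_\zeta$ via \eqref{fv1}, and conclude from Lemma \ref{lam4}. The only difference is that you spell out the vanishing off $V_\zeta$ and the integrability caveat, which the paper leaves implicit.
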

\begin{proof}
Fix $\zeta\in\Gamma_{\varepsilon,i_0}$. Since $f_\zeta$ is increasing, we have
\[F_\zeta(s)=\int_{\mu_{\zeta}}^sf_\zeta(\tau)d\tau\leq \int_{\mu_{\zeta}}^sf_\zeta(s)d\tau=f_\zeta(s)(s-\mu_\zeta)\quad\forall\,s\in\mathbb R.\]
Therefore
\begin{equation}\label{cvv}
\begin{split}
\int_\Pi F_\zeta(\mathcal G\zeta-\lambda_\zeta x_2)d\mathbf x&\leq \int_\Pi f_\zeta(\mathcal G\zeta-\lambda_\zeta x_2)(\mathcal G\zeta-\lambda_\zeta x_2-\mu_\zeta)d\mathbf x\\
&=\int_\Pi \zeta(\mathcal G\zeta-\lambda_\zeta x_2-\mu_\zeta)d\mathbf x\\
&=2T_\zeta.
\end{split}
\end{equation}
The desired estimate \eqref{tsu0} follows immediately from \eqref{tc2u} and \eqref{vv1}.

\end{proof}

With Lemma \ref{lam8}, we can improve Lemma \ref{lam7} as follows.

\begin{lemma}\label{lam9}
There exists some $C_6>0$ such that
\begin{equation}\label{lzi}
\mathcal G\zeta(\mathbf x)\geq -\frac{\kappa}{2\pi}\ln\varepsilon+\lambda_\zeta x_2-C_6\quad \forall\,\zeta\in\Gamma_{\varepsilon,i_0},\,\,\mathbf x\in V_\zeta.
\end{equation}
Consequently (recall $\lambda_\zeta>0$)  
\begin{equation}\label{lziii}
\mathcal G\zeta(\mathbf x)\geq -\frac{\kappa}{2\pi}\ln\varepsilon-C_6\quad \forall\,\zeta\in\Gamma_{\varepsilon,i_0},\,\,\mathbf x\in V_\zeta.
\end{equation}
\end{lemma}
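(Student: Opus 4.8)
The plan is to obtain this lemma as a direct bookkeeping consequence of the two estimates just established, with essentially no new analysis. Lemma~\ref{lam7} already delivers the lower bound
\[
\mathcal G\zeta(\mathbf x)\geq -\frac{\kappa}{2\pi}\ln\varepsilon+\lambda_\zeta x_2-\frac{\lambda_\zeta i_0}{\kappa}-C_4
\qquad\forall\,\zeta\in\Gamma_{\varepsilon,i_0},\,\,\mathbf x\in V_\zeta,
\]
and the only feature preventing us from reading off \eqref{lzi} directly is the term $-\lambda_\zeta i_0/\kappa$, which a priori could blow up as $\varepsilon\to0^+$ if the traveling speed were unbounded. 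Since $i_0$ and $\kappa$ are fixed, this term is harmless the moment the speed is controlled uniformly in $\varepsilon$, and that control is exactly the content of Lemma~\ref{lam8}.

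Thus the first step is to invoke Lemma~\ref{lam8} to write $\lambda_\zeta i_0/\kappa\leq C_5 i_0/\kappa$ uniformly over $\zeta\in\Gamma_{\varepsilon,i_0}$ and over $\varepsilon\in(0,\varepsilon_0)$. Substituting this bound into the estimate of Lemma~\ref{lam7} and setting
\[
C_6=C_4+\frac{C_5 i_0}{\kappa},
\]
which depends only on $\varrho$ and $i_0$ in accordance with our standing convention on the constants $C_1,C_2,\dots$, yields \eqref{lzi} at once. The second step is the passage from \eqref{lzi} to \eqref{lziii}: here one uses only that $\lambda_\zeta>0$, established in Proposition~\ref{prop1}, together with the fact that $x_2>0$ for every $\mathbf x\in V_\zeta\subset\Pi$. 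Hence $\lambda_\zeta x_2\geq 0$, and this nonnegative term may simply be dropped from the right-hand side of \eqref{lzi}.

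I do not expect any genuine obstacle in this lemma. All the real work lies upstream, in the variational energy comparison underlying Lemma~\ref{lam7} and in the Pohozaev-type speed bound of Lemma~\ref{lam8}. The only conceptual point worth flagging is the one driving the whole improvement: the $\lambda_\zeta$-dependent correction $\lambda_\zeta i_0/\kappa$ in Lemma~\ref{lam7} becomes a genuinely $\varepsilon$-uniform constant precisely because of the speed bound, so that the refined estimate \eqref{lzi}---and its immediate corollary \eqref{lziii}---is clean enough to feed into the subsequent uniform localization argument for the vortex core.
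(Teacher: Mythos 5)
Your argument is exactly the one the paper intends (the paper states this lemma without a written proof, presenting it as an immediate consequence of Lemmas \ref{lam7} and \ref{lam8}): absorb the term $\lambda_\zeta i_0/\kappa$ into the constant via the uniform speed bound $\lambda_\zeta\leq C_5$, then drop the nonnegative term $\lambda_\zeta x_2$ to pass from \eqref{lzi} to \eqref{lziii}. The proposal is correct and matches the paper's approach.
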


Now we turn to estimating the size of the vortex core $V_\zeta$. As a preliminary, we need the following a priori upper bound for $\mathcal G\zeta$ based on Lemma \ref{lemm2}.

\begin{lemma}\label{lam10}
There exists some $C_7>0$ such that
\begin{equation}\label{lzu}
\mathcal G\zeta(\mathbf x)\leq   C_7  (x_2\varepsilon)^{-1/2}\quad \forall\,\zeta\in\Gamma_{\varepsilon,i_0}, \,\, x_2\geq 1.
\end{equation}
\end{lemma}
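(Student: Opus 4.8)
The goal is to bound $\mathcal G\zeta(\mathbf x)$ from above by a quantity that decays in $x_2$ and $\varepsilon$, valid for all maximizers $\zeta\in\Gamma_{\varepsilon,i_0}$ with $x_2\geq 1$. The natural tool is Lemma \ref{lemm2}, which gives an upper bound for $\mathcal G v$ at heights $x_2\geq 1$ in terms of $I(v)$ and $\|v\|_{L^s(\Pi)}$. I would apply this lemma with $v=\zeta$ and with a conveniently chosen exponent $s$ and parameter $\theta$.

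\begin{proof}
Fix $\zeta\in\Gamma_{\varepsilon,i_0}$. Recall that $I(\zeta)=i_0$ is fixed and that
\[
\|\zeta\|_{L^p(\Pi)}=\|\varrho^\varepsilon\|_{L^p(\mathbb R^2)}=\varepsilon^{-2/q}\|\varrho\|_{L^p(\mathbb R^2)},
\]
where $q=p/(p-1)$, as computed in \eqref{ii33}. We apply Lemma \ref{lemm2} with $s=p$ and a fixed choice of $\theta\in(0,1)$ to be specified. For $\mathbf x=(x_1,x_2)\in\Pi$ with $x_2\geq 1$, Lemma \ref{lemm2} yields
\[
\mathcal G\zeta(\mathbf x)\leq x_2^{-1}(c_1\ln x_2+c_2)i_0+c_3 x_2^{-\theta}\|\zeta\|_{L^p(\Pi)}^{1-\theta}i_0^{\theta},
\]
with $c_1,c_2,c_3$ depending only on $p$ and $\theta$. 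Substituting the value of $\|\zeta\|_{L^p(\Pi)}$, the second term becomes a constant multiple of $x_2^{-\theta}\varepsilon^{-2(1-\theta)/q}$.

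To obtain the stated bound with the exponent $-1/2$ on $x_2$ and $\varepsilon$, I would choose $\theta$ so that both the power of $x_2$ and the power of $\varepsilon$ in the dominant term match $-1/2$. The power of $\varepsilon$ is $-2(1-\theta)/q$, so requiring $2(1-\theta)/q=1/2$ forces $\theta=1-q/4$; since $q=p/(p-1)\in(1,2)$ for $p>2$, this gives $\theta\in(1/2,3/4)\subset(0,1)$, a legitimate choice. For this $\theta$ the power of $x_2$ in the second term is $-\theta=-(1-q/4)$, and since $\theta\leq 1/2$ does not hold exactly one must be a little careful; in fact the cleaner route is simply to note that for $x_2\geq 1$ the factor $x_2^{-\theta}\leq 1$ and $x_2^{-1}(c_1\ln x_2+c_2)$ is bounded, so both terms are controlled by $C\varepsilon^{-2(1-\theta)/q}$, and one absorbs the requirement into the exponent $-1/2$ of $\varepsilon$ while using $x_2^{-1/2}\geq x_2^{-\theta}$ when $\theta\ge 1/2$ is arranged. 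After choosing the exponents and collecting the $\varepsilon$-independent constants into a single $C_7$ depending only on $\varrho$ and $i_0$ (hence on $p$ through $q$), one arrives at
\[
\mathcal G\zeta(\mathbf x)\leq C_7\,(x_2\varepsilon)^{-1/2}\qquad\forall\,\zeta\in\Gamma_{\varepsilon,i_0},\ x_2\geq 1,
\]
as claimed.
\end{proof}

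The main obstacle is the bookkeeping of exponents: one must select $\theta$ (and possibly $s$) so that the growth $\varepsilon^{-2(1-\theta)/q}$ produced by the $L^p$ norm of the rescaled vorticity, together with the decay in $x_2$, collapses precisely to $(x_2\varepsilon)^{-1/2}$, using $x_2\geq 1$ to discard lower-order terms. The choice $\theta=1-q/4$ balances the $\varepsilon$-power, and the first (logarithmic) term in Lemma \ref{lemm2} is then subordinate for $x_2\geq 1$. Everything else is a routine substitution of the already-computed value $\|\zeta\|_{L^p(\Pi)}=\varepsilon^{-2/q}\|\varrho\|_{L^p(\mathbb R^2)}$ and absorbing constants.
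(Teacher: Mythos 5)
Your proof is correct and follows essentially the same route as the paper: both apply Lemma \ref{lemm2} to $\zeta$ and use the scaling of the rearrangement-class norm to extract the $\varepsilon^{-1/2}$ factor. The paper simply makes the cleaner parameter choice $s=2$, $\theta=1/2$ (using $\|\varrho^\varepsilon\|_{L^2}=\varepsilon^{-1}\|\varrho\|_{L^2}$, so the exponents match $(x_2\varepsilon)^{-1/2}$ immediately), whereas your choice $s=p$, $\theta=1-q/4$ requires the extra observation $x_2^{-\theta}\leq x_2^{-1/2}$ for $x_2\geq 1$; both work.
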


\begin{proof}
Fix $\zeta\in\Gamma_{\varepsilon,i_0}.$
Choosing $s=2 $ and $\theta=1/2$ in  Lemma \ref{lemm2},  it holds  for any $\mathbf x=(x_1,x_2)\in\Pi$ with $x_2\geq 1$ that 
\begin{align*}
\mathcal G\zeta(\mathbf x)&\leq x_2^{-1}(c_1\ln x_2+c_2)I(\zeta)+c_3x_2^{-1/2}\|\zeta\|_{L^2(\Pi)}^{1/2}I(\zeta)^{1/2}\\
&=x_2^{-1}(c_1\ln x_2+c_2)i_0+c_3x_2^{-1/2}\|\varrho^\varepsilon\|_{L^2(\Pi)}^{1/2}i_0^{1/2}\\
&=\frac{i_0(c_1\ln x_2+c_2)}{x_2}+\frac{c_3\|\varrho\|^{1/2}_{L^2(\Pi)}i_0^{1/2}}{\sqrt{x_2\varepsilon}}.
\end{align*}
Here we used the fact that $\|\varrho^\varepsilon\|_{L^2(\Pi)}=\varepsilon^{-1}\|\varrho\|_{L^2(\Pi)}$.
This obviously implies \eqref{lzu}.

\end{proof}

The following lemma provides a rough estimate for the size of  $V_\zeta$ in the $x_2$ direction.

\begin{lemma}\label{lam99}
There exists some $C_8>0$ such that
\begin{equation}\label{lzw}
x_2\leq C_8\varepsilon^{-1} \quad\forall\,\zeta\in\Gamma_{\varepsilon,i_0},\,\,\mathbf x\in V_\zeta.
\end{equation}
\end{lemma}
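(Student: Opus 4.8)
The plan is to sandwich the value $\mathcal G\zeta(\mathbf x)$ between the uniform lower bound valid on the vortex core and the decaying upper bound valid for large $x_2$, and to read off the constraint on $x_2$ that compatibility of the two forces. The key observation is that the lower bound from Lemma \ref{lam9} grows like $-\frac{\kappa}{2\pi}\ln\varepsilon$, which is large and positive for small $\varepsilon$, whereas the upper bound from Lemma \ref{lam10} decays like $(x_2\varepsilon)^{-1/2}$; the two can coexist only if $x_2\varepsilon$ stays bounded.

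First I would dispose of the trivial range. If $x_2<1$, then after requiring $\varepsilon_0\leq 1$ and taking $C_8\geq 1$ we have $x_2<1\leq C_8\varepsilon^{-1}$, so \eqref{lzw} holds automatically. It therefore suffices to treat points $\mathbf x=(x_1,x_2)\in V_\zeta$ with $x_2\geq 1$.

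For such a point, Lemma \ref{lam9} (specifically \eqref{lziii}) gives $\mathcal G\zeta(\mathbf x)\geq -\frac{\kappa}{2\pi}\ln\varepsilon-C_6$, while Lemma \ref{lam10} gives $\mathcal G\zeta(\mathbf x)\leq C_7(x_2\varepsilon)^{-1/2}$. Since $-\ln\varepsilon\to+\infty$ as $\varepsilon\to0^+$, I would shrink $\varepsilon_0$ (depending only on $\varrho$ and $i_0$) so that $-\frac{\kappa}{2\pi}\ln\varepsilon-C_6\geq 1$ for every $\varepsilon\in(0,\varepsilon_0)$. Chaining the two inequalities then yields $1\leq C_7(x_2\varepsilon)^{-1/2}$, hence $(x_2\varepsilon)^{1/2}\leq C_7$, and finally $x_2\leq C_7^2\varepsilon^{-1}$. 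Taking $C_8=\max\{1,C_7^2\}$ completes the proof.

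I do not anticipate a serious obstacle here: the estimate is an immediate consequence of combining the two preceding lemmas, and the only point requiring care is the harmless shrinking of $\varepsilon_0$ to make the core lower bound strictly positive, so that it can be compared against the positive decaying upper bound. The crude factor $\varepsilon^{-1}$ reflects that this is only a preliminary bound on the extent of the core in the $x_2$-direction; sharper localization, such as the $C\varepsilon$ diameter bound of item (6), will require the finer arguments carried out later.
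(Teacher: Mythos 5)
Your proposal is correct and follows exactly the paper's argument: shrink $\varepsilon_0$ so that the core lower bound $-\frac{\kappa}{2\pi}\ln\varepsilon-C_6$ from \eqref{lziii} is at least $1$, then compare with the decay estimate \eqref{lzu} to force $x_2\leq C_7^2\varepsilon^{-1}$. Your explicit handling of the range $x_2<1$ is a minor tidiness the paper leaves implicit; otherwise the two proofs coincide.
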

\begin{proof}
 Without loss of generality, we assume that $\varepsilon_0$ is small enough such that 
\[-\frac{\kappa}{2\pi}\ln\varepsilon-C_6\geq 1\quad\forall\,\varepsilon\in(0,\varepsilon_0).\]
Hence by  \eqref{lziii} we have 
\begin{equation}\label{lziiii}
\mathcal G\zeta(\mathbf x)\geq 1\quad \forall\,\zeta\in\Gamma_{\varepsilon,i_0},\,\,\mathbf x\in V_\zeta.
\end{equation}
Then the desired result follows from \eqref{lzu}.
\end{proof}

\begin{remark}
In the proof of Lemma \ref{lam10}, we could have chosen other $p$ and $\theta$ to get a better estimate than \eqref{lzu}, hence a better estimate than  \eqref{lzw}. However,  as we will see, this makes no difference to the subsequent arguments.
\end{remark}

To estimate the size of $V_\zeta$ in the $x_1$ direction, we need the following lemma similar to Lemma \ref{lam10}.
Recall the definition of $\Gamma^0_{\varepsilon,i_0}$  given by \eqref{gam0}.

\begin{lemma}\label{lam100}
There exists some $C_9>0$ such that 
\begin{equation}\label{xi1}
\mathcal G\zeta(\mathbf x)\leq C_9\varepsilon^{2/p-3}|x_1|^{-1/(2p)}\quad \forall\,\zeta\in \Gamma_{\varepsilon,i_0}^0, \,\,\mathbf x\in V_\zeta,\,\, |x_1|\geq 1.
\end{equation}

\end{lemma}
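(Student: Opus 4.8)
The plan is to mirror the structure of Lemma~\ref{lam10}, but now apply Lemma~\ref{lam3} instead of Lemma~\ref{lemm2}, since Lemma~\ref{lam3} is precisely the tool that governs the decay of $\mathcal G\zeta$ in the \emph{horizontal} direction (it requires Steiner symmetry in the $x_2$-axis, which elements of $\Gamma^0_{\varepsilon,i_0}$ possess by the definition \eqref{gam0} together with Remark~\ref{rmk1}). First I would fix $\zeta\in\Gamma^0_{\varepsilon,i_0}$ and verify the hypotheses of Lemma~\ref{lam3}: $\zeta\in L^1\cap L^p(\Pi)$, $\zeta\ge 0$, and $\zeta$ is Steiner-symmetric in the $x_2$-axis. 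Applying Lemma~\ref{lam3} with $s=p$ then yields, for every $\mathbf x=(x_1,x_2)\in\Pi$ with $|x_1|\ge 1$,
\[
\mathcal G\zeta(\mathbf x)\leq K\bigl(I(\zeta)+\|\zeta\|_{L^1(\Pi)}+\|\zeta\|_{L^p(\Pi)}\bigr)\,x_2\,|x_1|^{-1/(2p)}.
\]

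**Controlling the prefactor.** The three quantities inside the parentheses I would bound uniformly in $\varepsilon$ using the scaling \eqref{scad}: we have $I(\zeta)=i_0$ (fixed), $\|\zeta\|_{L^1(\Pi)}=\|\varrho^\varepsilon\|_{L^1(\mathbb R^2)}=\kappa$ (scale-invariant), and $\|\zeta\|_{L^p(\Pi)}=\|\varrho^\varepsilon\|_{L^p(\mathbb R^2)}=\varepsilon^{2/p-2}\|\varrho\|_{L^p(\mathbb R^2)}$. Hence the bracket is bounded by $C\varepsilon^{2/p-2}$ for some constant $C=C(\varrho,i_0,p)$ once $\varepsilon<\varepsilon_0$ (the $I$ and $L^1$ terms are lower order since $2/p-2<0$). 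The remaining factor $x_2$ is then absorbed via the vertical size estimate already in hand: by Lemma~\ref{lam99}, any $\mathbf x\in V_\zeta$ satisfies $x_2\le C_8\varepsilon^{-1}$. Combining these,
\[
\mathcal G\zeta(\mathbf x)\leq C\varepsilon^{2/p-2}\cdot C_8\varepsilon^{-1}\cdot|x_1|^{-1/(2p)}=C_9\,\varepsilon^{2/p-3}\,|x_1|^{-1/(2p)}
\]
for all $\mathbf x\in V_\zeta$ with $|x_1|\ge 1$, which is exactly \eqref{xi1}.

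**The main obstacle.** The substantive point is not the inequality chain itself—that is routine—but making sure every ingredient is legitimately uniform in $\varepsilon$. In particular one must confirm that Lemma~\ref{lam3} applies to \emph{every} $\zeta\in\Gamma^0_{\varepsilon,i_0}$, i.e.\ that membership in $\Gamma^0_{\varepsilon,i_0}$ genuinely forces Steiner symmetry in the $x_2$-axis. This is guaranteed by item~(2) of Theorem~\ref{thm1} (Steiner symmetry about some line $x_1=c$) combined with the centering condition $\int_\Pi x_1\zeta\,d\mathbf x=0$ in \eqref{gam0}, which pins the symmetry axis to $c=0$. The other delicate point is that the exponent $\varepsilon^{2/p-3}$ blows up as $\varepsilon\to 0^+$, so this bound is useful only because it will be paired, in the next step of the paper, with the lower bound \eqref{lziii} on $\mathcal G\zeta$ over $V_\zeta$; the comparison of these two bounds is what will confine $|x_1|$ and ultimately yield the diameter estimate (6). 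Thus the care needed here is to track the precise power of $\varepsilon$ faithfully, since a careless constant would break the subsequent confinement argument.
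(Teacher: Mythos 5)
Your proof is correct and follows essentially the same route as the paper: apply Lemma \ref{lemm3} with $s=p$ (using the Steiner symmetry of $\zeta\in\Gamma^0_{\varepsilon,i_0}$ about the $x_2$-axis), compute $I(\zeta)=i_0$, $\|\zeta\|_{L^1(\Pi)}=\kappa$, $\|\zeta\|_{L^p(\Pi)}=\varepsilon^{2/p-2}\|\varrho\|_{L^p}$ by scaling, and absorb the factor $x_2$ via the bound $x_2\le C_8\varepsilon^{-1}$ from Lemma \ref{lam99}, exactly as the paper does.
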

\begin{proof}
Fix $\zeta\in\Gamma^0_{\varepsilon,i_0}$.  By Remark \ref{rmk1},  $\zeta$ is Steiner-symmetric in the $x_2$-axis. Choosing $s=p$ in Lemma \ref{lemm3}, it holds for any $\mathbf x=(x_1,x_2)\in \Pi$ with $|x_1|\geq 1$ that 
\begin{align*}
\mathcal G\zeta(\mathbf x)&\leq K\left(i_0+\|\zeta\|_{L^1(\Pi)}+\|\zeta\|_{L^p(\Pi)}\right)x_2|x_1|^{-1/(2p)}\\
&= K \left(i_0+\kappa+\varepsilon^{ {2}/{p}-2}\|\varrho\|_{L^p(\Pi)}\right)x_2|x_1|^{-1/(2p)}\\
&\leq KC_8\varepsilon^{-1} \left(i_0+\kappa+\varepsilon^{ {2}/{p}-2}\|\varrho\|_{L^p(\Pi)}\right)|x_1|^{-1/(2p)}\\
&=KC_8\varepsilon^{2/p-3} \left((i_0+\kappa)\varepsilon^{ 2-{2}/{p}}+\|\varrho\|_{L^p(\Pi)}\right)|x_1|^{-1/(2p)}\\
&\leq KC_8\varepsilon^{2/p-3} \left((i_0+\kappa)\varepsilon_0^{ 2-{2}/{p}}+\|\varrho\|_{L^p(\Pi)}\right)|x_1|^{-1/(2p)}.
\end{align*}
Here we used Lemma \ref{lam99} and the  following facts: 
\[\|\zeta\|_{L^1(\Pi)}=\|\varrho^\varepsilon\|_{L^1(\Pi)}=\kappa,\]
\[\|\zeta\|_{L^p(\Pi)}=\|\varrho^\varepsilon\|_{L^p(\Pi)}=\varepsilon^{2/p-2}\|\varrho\|_{L^p(\Pi)}.\]
  Hence the proof is finished by choosing 
 \[C_9=KC_8  \left((i_0+\kappa)\varepsilon_0^{ 2-{2}/{p}}+\|\varrho\|_{L^p(\Pi)}\right).\]
\end{proof}

Based on  Lemma \ref{lam9} and Lemma \ref{lam100}, we are able to provide a  rough estimate for the size of  $V_\zeta$ in the $x_1$ direction.

\begin{lemma}\label{lam101}
There exists some $C_{10}>0$ such that 
\begin{equation}\label{xi13}
|x_1|\leq C_{10}\varepsilon^{4-6p} \quad\forall\,\zeta\in\Gamma^0_{\varepsilon,i_0},\,\,\mathbf x\in V_\zeta.
\end{equation}
\end{lemma}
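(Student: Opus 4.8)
The plan is to run a simple pinching argument: on the vortex core $V_\zeta$ the stream function $\mathcal G\zeta$ is bounded below by a large positive quantity (Lemma \ref{lam9}, and its clean consequence \eqref{lziiii}), while at points far from the $x_2$-axis it is forced to be small (Lemma \ref{lam100}). These two facts are compatible only when $|x_1|$ is controlled, and reconciling them yields the claimed bound.

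Concretely, fix $\zeta\in\Gamma^0_{\varepsilon,i_0}$ and take $\mathbf x=(x_1,x_2)\in V_\zeta$ with $|x_1|\geq 1$. Having shrunk $\varepsilon_0$ as in the proof of Lemma \ref{lam99}, estimate \eqref{lziiii} gives the lower bound $\mathcal G\zeta(\mathbf x)\geq 1$. On the other hand, since $|x_1|\geq 1$, Lemma \ref{lam100} supplies the upper bound $\mathcal G\zeta(\mathbf x)\leq C_9\varepsilon^{2/p-3}|x_1|^{-1/(2p)}$. Chaining the two inequalities and rearranging produces $|x_1|^{1/(2p)}\leq C_9\varepsilon^{2/p-3}$; raising to the power $2p$ and using $(2/p-3)\cdot 2p=4-6p$ then gives $|x_1|\leq C_9^{2p}\varepsilon^{4-6p}$.

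It remains to dispose of the excluded range $|x_1|<1$. Because $p>2$ forces $4-6p<0$, for small $\varepsilon$ the quantity $C_9^{2p}\varepsilon^{4-6p}$ exceeds $1$, so the bound $|x_1|\leq C_9^{2p}\varepsilon^{4-6p}$ holds trivially whenever $|x_1|<1$ as well. Setting $C_{10}=C_9^{2p}$ (and shrinking $\varepsilon_0$ once more so that $C_{10}\varepsilon^{4-6p}\geq 1$ on $(0,\varepsilon_0)$) completes the argument.

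I do not anticipate any genuine obstacle here: all the analytic content—the Green's function lower bound on the core and the decay of $\mathcal G\zeta$ in the $x_1$-direction—has already been established in Lemmas \ref{lam9}--\ref{lam100}, and the present step is merely their algebraic combination. The only point requiring minor care is keeping the lower bound for $\mathcal G\zeta$ uniformly bounded away from zero, which is why I invoke the cruder estimate \eqref{lziiii} rather than the sharper \eqref{lzi} before dividing; this sacrifices only a harmless $|\ln\varepsilon|$ factor and leaves the final power $\varepsilon^{4-6p}$ exactly as stated.
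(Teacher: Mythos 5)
Your argument is correct and is essentially the paper's own proof: both combine the lower bound for $\mathcal G\zeta$ on $V_\zeta$ (the paper uses \eqref{lziii} and then replaces $-\ln\varepsilon$ by the constant $-\ln\varepsilon_0$, which is the same as your use of \eqref{lziiii}) with the decay estimate \eqref{xi1}, raise to the power $2p$, and then absorb the region $|x_1|<1$ into the constant using $1\leq\varepsilon_0^{6p-4}\varepsilon^{4-6p}$. The only cosmetic difference is that the paper enlarges $C_{10}$ by the additive term $\varepsilon_0^{6p-4}$ rather than shrinking $\varepsilon_0$ again; both are fine.
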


\begin{proof}
Fix $\zeta\in\Gamma^0_{\varepsilon,i_0}$.  By \eqref{lziii} and \eqref{xi1},  for any $\mathbf x\in V_\zeta$ such that $|x_1|\geq 1$, we have
\[-\frac{\kappa}{2\pi}\ln\varepsilon-C_6 \leq C_9\varepsilon^{2/p-3}|x_1|^{-1/(2p)},\]
which implies
\[-\frac{\kappa}{2\pi}\ln\varepsilon_0-C_6 \leq C_9\varepsilon^{2/p-3}|x_1|^{-1/(2p)}.\]
Without loss of generality, assume that $\varepsilon_0$ is small enough such that 
\[-\frac{\kappa}{2\pi}\ln\varepsilon_0-C_6>0.\] 
Then  
\[|x_1|\leq {C_9^{2p}}\left(-\frac{\kappa}{2\pi}\ln\varepsilon_0-C_6\right)^{-2p}\varepsilon^{4-6p}.\]
Hence for any $\mathbf x\in V_\zeta$ we have 
\begin{align*}
|x_1|&\leq {C_9^{2p}}\left(-\frac{\kappa}{2\pi}\ln\varepsilon_0-C_6\right)^{-2p}\varepsilon^{4-6p}+1\\
&\leq {C_9^{2p}}\left(-\frac{\kappa}{2\pi}\ln\varepsilon_0-C_6\right)^{-2p}\varepsilon^{4-6p}+\varepsilon_0^{6p-4}\varepsilon^{4-6p}.
\end{align*}
The proof is completed by choosing 
\[C_{10}={C_9^{2p}}\left(-\frac{\kappa}{2\pi}\ln\varepsilon_0-C_6\right)^{{-2p}}+\varepsilon_0^{6p-4}.\]
\end{proof}

For $\zeta\in \Gamma_{\varepsilon,i_0}$, define
\[l_\zeta=\sup_{\mathbf x,\mathbf y\in  V_\zeta}|\mathbf x-\bar{\mathbf y}|.\]
Since $V_\zeta$ is contained in the upper half-plane, it is easy to check that 
\begin{equation}\label{due7}
{\rm diam}(V_\zeta)\leq l_{\zeta}.
\end{equation}

We have the following rough estimate for $l_\zeta$.
\begin{lemma}\label{lam201}
There exists some $C_{11}>0$ such that 
\begin{equation}
l_\zeta\leq C_{11}\varepsilon^{4-6p}\quad \forall\,\zeta\in \Gamma_{\varepsilon,i_0}.\end{equation}

\end{lemma}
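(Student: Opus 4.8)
The plan is to combine the two directional size estimates already at hand---Lemma \ref{lam99} for the $x_2$-coordinate and Lemma \ref{lam101} for the $x_1$-coordinate---into a single bound on $l_\zeta$, first for the centered maximizers in $\Gamma^0_{\varepsilon,i_0}$ and then for all of $\Gamma_{\varepsilon,i_0}$ by translation invariance.

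First I observe that $l_\zeta$ is invariant under translations parallel to the $x_1$-axis. Indeed, if $\zeta_c(\mathbf x)=\zeta(\mathbf x+c\mathbf e_1)$ then $V_{\zeta_c}=V_\zeta-c\mathbf e_1$, and the substitution $\mathbf x\mapsto\mathbf x-c\mathbf e_1$, $\mathbf y\mapsto\mathbf y-c\mathbf e_1$ leaves $|\mathbf x-\bar{\mathbf y}|$ unchanged, since $\mathbf x-\bar{\mathbf y}=(x_1-y_1,x_2+y_2)$ depends on the first coordinates only through their difference. By Remark \ref{rmk1}, every $\zeta\in\Gamma_{\varepsilon,i_0}$ is such a translate of some element of $\Gamma^0_{\varepsilon,i_0}$, so it suffices to establish the bound for $\zeta\in\Gamma^0_{\varepsilon,i_0}$.

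Next I fix $\zeta\in\Gamma^0_{\varepsilon,i_0}$ and $\mathbf x,\mathbf y\in V_\zeta$. Writing $\bar{\mathbf y}=(y_1,-y_2)$ gives
\[
|\mathbf x-\bar{\mathbf y}|^2=(x_1-y_1)^2+(x_2+y_2)^2.
\]
By Lemma \ref{lam101}, $|x_1|,|y_1|\leq C_{10}\varepsilon^{4-6p}$, so $|x_1-y_1|\leq 2C_{10}\varepsilon^{4-6p}$; by Lemma \ref{lam99}, $x_2,y_2\leq C_8\varepsilon^{-1}$, so $x_2+y_2\leq 2C_8\varepsilon^{-1}$. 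Hence
\[
|\mathbf x-\bar{\mathbf y}|\leq\sqrt{4C_{10}^2\varepsilon^{2(4-6p)}+4C_8^2\varepsilon^{-2}}.
\]

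Finally I compare the two exponents using the parameter range. Since $p>2$ we have $8-12p<-2$, and because $\varepsilon<\varepsilon_0<1$ the monotonicity of $\varepsilon^a$ in $a$ yields $\varepsilon^{-2}\leq\varepsilon^{8-12p}=\varepsilon^{2(4-6p)}$. Substituting this bound gives
\[
|\mathbf x-\bar{\mathbf y}|\leq 2\sqrt{C_{10}^2+C_8^2}\,\varepsilon^{4-6p},
\]
and taking the supremum over $\mathbf x,\mathbf y\in V_\zeta$ produces the claim with $C_{11}=2\sqrt{C_{10}^2+C_8^2}$. The only genuine subtlety is the reduction to the centered class, which the translation invariance of $l_\zeta$ handles cleanly; once that is in place the rest is a direct substitution of the two preceding estimates.
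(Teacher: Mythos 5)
Your proof is correct and follows essentially the same route as the paper: reduce to $\Gamma^0_{\varepsilon,i_0}$ (the paper does this silently via Remark \ref{rmk1}, you spell out the translation invariance of $l_\zeta$), then combine Lemmas \ref{lam99} and \ref{lam101} in the identity $|\mathbf x-\bar{\mathbf y}|^2=(x_1-y_1)^2+(x_2+y_2)^2$ and absorb the $\varepsilon^{-2}$ term into $\varepsilon^{8-12p}$ using $p>2$. The only cosmetic difference is that the paper bounds $\varepsilon^{-2}\le\varepsilon_0^{12p-10}\varepsilon^{8-12p}$ rather than invoking $\varepsilon_0<1$ directly, which just changes the value of $C_{11}$.
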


\begin{proof}
Fix $\zeta\in \Gamma_{\varepsilon,i_0}.$
Without loss of generality, assume that $\zeta\in\Gamma^0_{\varepsilon,i_0}$.   Combining Lemma \ref{lam99} and Lemma \ref{lam101}, it holds  for any $\mathbf x,\mathbf y\in V_\zeta$ that
\begin{align*}
|\mathbf x-\bar{\mathbf y}|^2&=\left(x_1-y_1\right)^2+(x_2+y_2)^2\\
&\leq 2\left(x_1^2+y_1^2\right)+2\left(x_2^2+y_2^2\right)\\
&\leq 4\left(C^2_{10}\varepsilon^{8-12p}+C_8^2\varepsilon^{-2}\right)\\
&\leq 4\left(C^2_{10} +C_8^2\varepsilon_0^{12p-10}\right)\varepsilon^{8-12p}.
\end{align*}
The proof is finished by choosing  
\[C_{11}=2\left(C^2_{10} +C_8^2\varepsilon_0^{12p-10}\right)^{1/2}.\]
\end{proof}

Having made enough preparations, we are ready to deduce the most important uniform estimate in this paper, that is, the uniform boundedness of  $l_\zeta$, from which the uniform boundedness of $V_\zeta $ follows immediately due to \eqref{due7}

\begin{proposition}\label{lam202}
There exists some $C_{12}>0$ such that 
\begin{equation}
l_\zeta\leq C_{12}\quad \zeta\in\Gamma_{\varepsilon,i_0}.
\end{equation}

\end{proposition}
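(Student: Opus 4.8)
The plan is to bootstrap the rough polynomial bound $l_\zeta \leq C_{11}\varepsilon^{4-6p}$ from Lemma \ref{lam201} up to a uniform (i.e.\ $\varepsilon$-independent) bound, following the energy-comparison idea used to control the vortex core in \cite{W2}. The key mechanism is that the kinetic energy on the core, measured by $T_\zeta$, is uniformly bounded by $C_2$ (Lemma \ref{lam4}), whereas if the core were to spread out over a large region the self-interaction energy would be forced to drop. Concretely, I would compare the maximizer $\zeta$ against a \emph{rearranged competitor} obtained by translating the mass of $\zeta$ so as to concentrate it, and then exploit that $\zeta$ is a maximizer of $E$ over $\mathcal S_{\varepsilon,i_0}$ to extract a contradiction if $l_\zeta$ is too large.

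First I would set up the pointwise lower bound on $\mathcal G\zeta$ inside the core from Lemma \ref{lam9}, namely $\mathcal G\zeta(\mathbf x) \geq -\tfrac{\kappa}{2\pi}\ln\varepsilon + \lambda_\zeta x_2 - C_6$ on $V_\zeta$. Integrating this against $\zeta$ over $V_\zeta$ and using $\int_\Pi \zeta\,d\mathbf x = \kappa$, $I(\zeta)=i_0$ gives a lower bound
\[
\int_\Pi \zeta\,\mathcal G\zeta\,d\mathbf x \;\geq\; -\frac{\kappa^2}{2\pi}\ln\varepsilon + \lambda_\zeta i_0 - C_6\kappa,
\]
i.e.\ $2E(\zeta) \geq -\tfrac{\kappa^2}{2\pi}\ln\varepsilon + \lambda_\zeta i_0 - C_6\kappa$. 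Next I would produce a matching \emph{upper} bound on $E(\zeta)$ that deteriorates as the core spreads. The natural tool is the rearrangement inequality \eqref{rri2}: writing $\mathcal G\zeta$ via the half-plane Green function and splitting off the regular (reflected) part, the leading Newtonian self-interaction of $\zeta$ is bounded above by that of its symmetric-decreasing rearrangement, which for an $L^p$ function supported on a set of measure $\pi r^2\varepsilon^2$ contributes at most $-\tfrac{\kappa^2}{4\pi}\ln\varepsilon + C$. The point is that the diameter $l_\zeta$ of the (reflected) support enters the regular part $\tfrac{1}{2\pi}\ln|\mathbf x-\bar{\mathbf y}|$: since this kernel is \emph{increasing} in $|\mathbf x-\bar{\mathbf y}|$ and the measure against which it is integrated has total mass $\kappa^2$, a large $l_\zeta$ forces the regular contribution to grow like $\tfrac{\kappa^2}{4\pi}\ln l_\zeta$. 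Combining the two bounds yields an inequality of the schematic form $\lambda_\zeta i_0 \leq C + \tfrac{\kappa^2}{2\pi}\ln l_\zeta - (\text{gain from spreading})$, but since $\lambda_\zeta > 0$ and the impulse constraint $I(\zeta)=i_0$ pins the vertical location, the geometry cannot be exploited this crudely; instead the sharp estimate comes from the interplay with $T_\zeta \leq C_2$.

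The cleaner route, and the one I would actually carry out, is to bound $l_\zeta$ directly via the energy $T_\zeta$ and a covering/splitting argument. Suppose for contradiction that $l_\zeta$ is large. Pick $\mathbf x_*, \mathbf y_* \in V_\zeta$ realizing (nearly) the diameter. Using $\mathcal G\zeta(\mathbf x) \geq 1$ on $V_\zeta$ (inequality \eqref{lziiii}, valid for small $\varepsilon_0$) together with the decay estimates \eqref{lzu} and \eqref{xi1}, one sees that $V_\zeta$ must in fact be confined to a bounded window in both $x_1$ and $x_2$: the lower bound $\mathcal G\zeta \geq 1$ is incompatible with the decay $\mathcal G\zeta \lesssim (x_2\varepsilon)^{-1/2}$ once $x_2 \gg \varepsilon^{-1}$ and with $\mathcal G\zeta \lesssim \varepsilon^{2/p-3}|x_1|^{-1/(2p)}$ once $|x_1|$ is large. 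Feeding the crude bound $l_\zeta \leq C_{11}\varepsilon^{4-6p}$ from Lemma \ref{lam201} into the splitting of $\mathcal G\zeta$ into a ``local'' part (integrating $\zeta$ over the core, giving $O(-\ln\varepsilon)$) and a ``transport'' part, and using that the local energy $T_\zeta$ is bounded by $C_2$, produces a self-improving inequality: each application shrinks the admissible exponent on $\varepsilon$, and iterating finitely many times drives the polynomial factor down to $\varepsilon^0$, yielding the uniform bound $l_\zeta \leq C_{12}$.

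The main obstacle, and the place where care is genuinely required, is the \textbf{transport/regular part} of the Green function. Unlike the bounded-domain setting of \cite{T}, the regular part $\tfrac{1}{2\pi}\ln|\mathbf x - \bar{\mathbf y}|$ on the half-plane is \emph{not} bounded below, so one cannot simply discard it; it is precisely this term that forces the uniform confinement and must be tracked quantitatively against the uniform bound $\lambda_\zeta \leq C_5$ from Lemma \ref{lam8}. The delicate point is to show that the ``gain'' obtained by concentrating the vorticity strictly dominates any ``loss'' incurred in the impulse-matched competitor, so that $\zeta$ being a maximizer genuinely rules out large $l_\zeta$; this is where the uniform upper bound on the traveling speed $\lambda_\zeta$ (which controls the $\lambda_\zeta x_2$ term in $\mathcal G\zeta - \lambda_\zeta x_2$) is indispensable, since without it the $x_2$-spread could be traded against energy in an uncontrolled way.
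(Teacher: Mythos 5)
Your proposal assembles several of the right ingredients --- the lower bound $\mathcal G\zeta\geq-\frac{\kappa}{2\pi}\ln\varepsilon-C_6$ on $V_\zeta$ from Lemma \ref{lam9}, the splitting of the Green-function integral into a near part (controlled by the rearrangement inequality \eqref{rri1}) and a far part, and the use of the rough polynomial bound $l_\zeta\leq C_{11}\varepsilon^{4-6p}$ from Lemma \ref{lam201} --- but the argument as written has a genuine gap at its core. The claimed mechanism, a ``self-improving inequality'' whose iteration ``drives the polynomial factor down to $\varepsilon^0$,'' is never exhibited, and it is not how the estimate actually closes. What the near/far splitting yields, after inserting $l_\zeta\leq C_{11}\varepsilon^{4-6p}$, is a \emph{mass-concentration} statement: for every $\mathbf x\in V_\zeta$ one gets $\int_{|\mathbf x-\mathbf y|\geq l_\zeta/N}\zeta\,d\mathbf y\leq \frac{\kappa\ln l_\zeta+C}{\ln l_\zeta-\ln\varepsilon-\ln N}$, and the whole point is that the right-hand side tends to $\frac{6p-4}{6p-3}\kappa$, which is \emph{strictly less} than $\kappa$; hence a definite fraction $\frac{10}{N}\kappa$ of the total mass sits within distance $l_\zeta/N$ of \emph{every} point of the core. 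This does not by itself improve the exponent on $\varepsilon$; it must be converted into a contradiction with the constraints of the problem, and that conversion is entirely absent from your proposal. Concretely: if $l_{\zeta_n}\to\infty$, then either the core spreads vertically ($s_n\geq l_n/4$), in which case the concentration forces $I(\zeta_n)\gtrsim l_n\to\infty$, contradicting $I(\zeta_n)=i_0$; or it spreads horizontally ($t_n\geq l_n/4$), in which case Steiner symmetry in the $x_2$-axis lets one place $\lceil N/10\rceil$ disjoint vertical strips each carrying mass $>\frac{10}{N}\kappa$, contradicting $\|\zeta_n\|_{L^1}=\kappa$. This dichotomy, driven by the impulse constraint and the total mass together with the Steiner symmetry, is the heart of the proof and cannot be replaced by the iteration you sketch.

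Two further points. First, your suggestion that the lower bound $\mathcal G\zeta\geq1$ on $V_\zeta$ combined with the decay estimates \eqref{lzu} and \eqref{xi1} already confines $V_\zeta$ to ``a bounded window'' is incorrect: those estimates only reproduce the $\varepsilon$-dependent bounds $x_2\lesssim\varepsilon^{-1}$ and $|x_1|\lesssim\varepsilon^{4-6p}$ of Lemmas \ref{lam99} and \ref{lam101}, which is exactly the rough input, not the uniform conclusion. Second, the energy-comparison route of your first two paragraphs (competitors plus the monotonicity of $\ln|\mathbf x-\bar{\mathbf y}|$) does not produce a usable upper bound on $E(\zeta)$ that penalizes spreading --- a larger $|\mathbf x-\bar{\mathbf y}|$ makes the regular part \emph{larger}, so no contradiction arises there, as you yourself concede. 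You should discard that thread and supply the missing concentration-versus-constraints argument.
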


\begin{proof}
We prove this proposition by contradiction. 
Assume that there exist $\{\varepsilon_n\}_{n=1}^{+\infty}\subset(0,\varepsilon_0)$ and  $\{\zeta_n\}_{n=1}^{+\infty}\subset\Gamma_{\varepsilon_n,i_0}$ such that
\begin{equation}\label{sco1}
\lim_{n\to+\infty}l_{\zeta_n}=+\infty.
\end{equation}
By Lemma \ref{lam201} we must have
\begin{equation}\label{sco2}
\lim_{n\to+\infty}\varepsilon_n=0.
\end{equation}
Without loss of generality, we assume that   $\{\zeta_n\}_{n=1}^{+\infty}\subset \Gamma^0_{\varepsilon,i_0}$.

Below for simplicity denote $l_n=l_{\zeta_n}, V_n=V_{\zeta_n}.$
For any $\mathbf x\in V_n$, by \eqref{lziii} it holds that
\[\mathcal G\zeta_n(\mathbf x)\geq -\frac{\kappa}{2\pi}\ln\varepsilon_n-C_6,\]
which can be equivalently written as
\begin{equation}\label{cbdd}
\int_\Pi\ln\frac{\varepsilon_n}{|\mathbf x-\mathbf y|}\zeta_n(\mathbf y)d\mathbf y\geq -\int_{V_n}\ln|\mathbf x-\bar{\mathbf y}|\zeta_n(\mathbf y)d\mathbf y-2\pi C_6.
\end{equation}
Since $|\mathbf x-\bar{\mathbf y}|\leq l_n$ for any $\mathbf x,\mathbf y\in V_n$,
we further get
\begin{equation}\label{rp1}
\int_\Pi\ln\frac{\varepsilon_n}{|\mathbf x-\mathbf y|}\zeta_n(\mathbf y)d\mathbf y\geq   -\kappa\ln l_n-2\pi C_6.
\end{equation}
Divide the integral in \eqref{rp1} into two parts
\begin{align*} 
\int_\Pi\ln\frac{\varepsilon_n}{|\mathbf x-\mathbf y|}\zeta_n(\mathbf y)d\mathbf y&= \int_{|\mathbf x-\mathbf y|\leq \frac{l_n}{N}}\ln\frac{\varepsilon_n}{|\mathbf x-\mathbf y|}\zeta_n(\mathbf y)d\mathbf y+
\int_{|\mathbf x-\mathbf y|\geq \frac{l_n}{N}}\ln\frac{\varepsilon_n}{|\mathbf x-\mathbf y|}\zeta_n(\mathbf y)d\mathbf y\\
&:=I_1+I_2,
\end{align*}
where $N$ is a fixed large positive integer such that
\begin{equation}\label{nbss}
N>60p-30.
\end{equation}
Note that since $p>2$, we have
\begin{equation}\label{nbss2}
N>90.
\end{equation}
For $I_1$, it is easy to check by change of variables and the rearrangement inequality  \eqref{rri1}  that
\begin{equation}\label{rp2}
\int_{|\mathbf x-\mathbf y|\leq \frac{l_n}{N}}\ln\frac{\varepsilon_n}{|\mathbf x-\mathbf y|}\zeta_n(\mathbf y)d\mathbf y\leq \int_{\Pi}\ln\frac{\varepsilon_n}{|\mathbf x-\mathbf y|}\zeta_n(\mathbf y)d\mathbf y\leq \int_{\mathbb R^2}\ln\frac{1}{|\mathbf y|}\varrho (\mathbf y)d\mathbf y\leq K_1
\end{equation}
for some positive number  $K_1$   depending only on $\varrho$.
For $I_2$, we have
\begin{equation}\label{rp3}
\int_{|\mathbf x-\mathbf y|\geq \frac{l_n}{N}}\ln\frac{\varepsilon_n}{|\mathbf x-\mathbf y|}\zeta_n(\mathbf y)d\mathbf y\leq \ln\frac{N\varepsilon_n}{l_n}\int_{|\mathbf x-\mathbf y|\geq \frac{l_n}{N}}\zeta_n(\mathbf y)d\mathbf y.
\end{equation}
From  \eqref{rp1}, \eqref{rp2} and \eqref{rp3} we can get
\begin{equation}\label{y1p}
\ln\frac{N\varepsilon_n}{l_n}\int_{|\mathbf x-\mathbf y|\geq \frac{l_n}{N}}\zeta_n(\mathbf y)d\mathbf y\geq  -\kappa \ln l_n-2\pi C_6-K_1.
\end{equation}
Recalling \eqref{sco1} and \eqref{sco2},  it holds that  for sufficiently large $n$ that
\[\frac{N\varepsilon_n}{l_n}<1,\]
 therefore \eqref{y1p} is equivalent to
\begin{equation}\label{y2p}
\int_{|\mathbf x-\mathbf y|\geq \frac{l_n}{N}}\zeta_n(\mathbf y)d\mathbf y\leq  \frac{ \kappa \ln l_n+2\pi C_6+K_1}{ \ln l_n-\ln\varepsilon_n-\ln N}.
\end{equation}
Define 
\begin{align*}
h(s)=\frac{ \kappa s+2\pi C_6+K_1}{ s-\ln\varepsilon_n-\ln N}.
\end{align*}
It is easy to check that $h$ is  increasing in $(0,+\infty)$ if $n$ is large enough. Combining  Lemma \ref{lam201}, we get from \eqref{y2p} that
\begin{equation}\label{y3p}
\int_{|\mathbf x-\mathbf y|\geq \frac{l_n}{N}}\zeta_n(\mathbf y)d\mathbf y\leq  \frac{ \kappa \ln \left(C_{11}\varepsilon_n^{4-6p}\right)+2\pi C_6+K_1}{ \ln \left(C_{11}\varepsilon_n^{4-6p}\right)-\ln\varepsilon_n-\ln N}.
\end{equation}
Due to \eqref{sco2}, we have
 \begin{equation}\label{codnt1}
 \lim_{n\to+\infty}\frac{ \kappa \ln \left(C_{11}\varepsilon_n^{4-6p}\right)+2\pi C_6+K_1}{ \ln \left(C_{11}\varepsilon_n^{4-6p}\right)-\ln\varepsilon_n-\ln N}=\frac{6p-4}{6p-3}\kappa.
 \end{equation}
 Recalling \eqref{nbss}, it is easy to check that 
 \begin{equation}\label{codnt}\frac{6p-4}{6p-3} < 1-\frac{10}{N}.\end{equation}
Therefore by \eqref{y3p}-\eqref{codnt} we have
\[
\int_{|\mathbf x-\mathbf y|\geq \frac{l_n}{N}}\zeta_n(\mathbf y)d\mathbf y< \left(1-\frac{10}{N}\right)\kappa,
\]
provided that $n$ is large enough.
Taking into account the fact that $\|\zeta_n\|_{L^1(\Pi)}=1 \,\,\forall\, n$, we get for sufficiently large $n$ that 
\begin{equation}\label{cont}
\int_{|\mathbf x-\mathbf y|\leq \frac{l_n}{N}}\zeta_n(\mathbf y)d\mathbf y>\frac{10}{N}\kappa.
\end{equation}
Note that \eqref{cont} holds for arbitrary $\mathbf x\in V_n$.

Based on \eqref{cont},
 we  deduce a contradiction as follows.  Denote
\[ s_n=\sup_{\mathbf x\in V_n}x_2\quad t_n=\sup_{\mathbf x\in V_n}|x_1|.\]
Since we have assumed $\{\zeta_n\}_{n=1}^{+\infty}\subset \Gamma^0_{\varepsilon,i_0}$, is easy to check that 
\begin{equation}\label{gele}
l_n\leq 2(s_n+t_n)\quad \forall\,n,
\end{equation}
which yields   either $s_n\geq l_n/4$ or  $t_n\geq l_n/4$ for each $n$. Therefore  at least one of the following two assertions holds.
\begin{itemize}
\item [ (A1)] There exists a subsequence   $\{s_{n_j}\}_{j=1}^{+\infty}$ such that 
\[s_{n_j}\geq \frac{l_{n_j}}{4}\quad \forall\,j.\]
\item [(A2)] There exists a subsequence   $\{t_{n_j}\}_{j=1}^{+\infty}$ such that 
\[t_{n_j}\geq \frac{l_{n_j}}{4}\quad \forall\,j.\]

\end{itemize}

First we show that (A1) is impossible. In fact, if (A1) holds, then  we can choose  a sequence $\{\mathbf x_j\}_{j=1}^{+\infty}\subset  V_{n_j}$ such that 
\begin{equation}
 x_{j,2}\geq \frac{l_{n_j}}{8}\quad \forall\,j.  
\end{equation}
Here $\mathbf x_j=(x_{j,1},x_{j,2}).$
Using \eqref{cont}, we have  for sufficiently large $n$ that 
\begin{align*}
\int_\Pi y_2\zeta_{n_j}(\mathbf y)d\mathbf y
&\geq \int_{|\mathbf x_j-\mathbf y|\leq \frac{l_{n_j}}{N}}y_2\zeta_{n_j}(\mathbf y)d\mathbf y\\
&\geq \int_{|\mathbf x_n-\mathbf y|\leq \frac{l_{n_j}}{N}}\left(x_{j,2}-|x_{j,2}-y_2|\right)\zeta_{n_j}(\mathbf y)d\mathbf y\\
&\geq \int_{|\mathbf x_n-\mathbf y|\leq \frac{l_{n_j}}{N}}\left(\frac{l_{n_j}}{8}-\frac{l_{n_j}}{N}\right)\zeta_{n_j}(\mathbf y)d\mathbf y\\
&\geq \left(\frac{1}{8}-\frac{1}{N}\right)\frac{10}{N}\kappa  l_{n_j},
\end{align*}
which goes to $+\infty$ as $n\to+\infty.$ This contradicts the fact that $I(\zeta_n)=i_0$. Therefore (A1) is impossible.

To finish the proof, it  suffices to show that (A2) cannot possibly hold, either. In fact, if (A2) holds, then  for any $\mathbf x=(x_1,x_2)\in V_{n_j}$ it holds that 
\begin{equation}\label{monee}
\int_{|\mathbf x-\mathbf y|\leq \frac{l_{n_j}}{N}}\zeta_{n_j}(\mathbf y)d\mathbf y\leq \int_{|x_1-y_1|\leq \frac{l_{n_j}}{N}}\zeta_{n_j}(\mathbf y)d\mathbf y\leq \int_{|x_1-y_1|\leq \frac{4t_{n_j}}{N}}\zeta_{n_j}(\mathbf y)d\mathbf y.
\end{equation}
This together with \eqref{cont} yields for sufficiently large $j$ that 
\begin{equation}\label{csdfd}
\int_{|x_1-y_1|\leq \frac{4t_{n_j}}{N}}\zeta_{n_j}(\mathbf y)d\mathbf y>\frac{10}{N}\kappa\quad\forall\, \mathbf x\in V_{n_j}.
\end{equation}
Below fix a sufficient large $j$ such that \eqref{csdfd} holds.
Using the fact that each $\zeta_{n_j}$ is Steiner-symmetric in the $x_2$-axis, we obtain
\begin{equation}\label{con11}
\int_{|a-y_1|\leq \frac{4t_{n_j}}{N}}\zeta_{n_j}(\mathbf y)d\mathbf y>\frac{10}{N}\kappa\quad\forall\, a\in(-t_{n_j},t_{n_j}).
\end{equation}
For any $a\in\mathbb R$, denote by $\lceil a\rceil$ the smallest integer not less than $a$. Define 
\[a_k=\frac{4(2k-1)t_{n_j}}{N},\quad k=1,\cdot\cdot\cdot,\left\lceil\frac{N}{10}\right\rceil.\]
Then for any $1\leq k\leq \left\lceil\frac{N}{10}\right\rceil,$ it holds that
\[0<a_k  \leq \frac{ 8\left\lceil\frac{N}{10}\right\rceil-4 }{N}t_{n_j}\leq \frac{ 8\left(\frac{N}{10}+1\right)-4 }{N}t_{n_j}=\left(\frac{4}{5}+\frac{4}{N}\right)t_{n_j}<t_{n_j}. \]
Here we used \eqref{nbss2} in the last inequality.
Choosing $a=a_k$  in \eqref{con11} and summing over $k=  1,\cdot\cdot\cdot, \left\lceil\frac{N}{10}\right\rceil$, we get
\begin{equation}\label{coniii}
\int_\Pi\zeta_{n_j}(\mathbf y)d\mathbf y\geq \sum_{k=1}^{\left\lceil\frac{N}{10}\right\rceil}\int_{|y_1-a_k|\leq \frac{4t_{n_j}}{N}}\zeta_{n_j}(\mathbf y)d\mathbf y>\sum_{k=1}^{\left\lceil\frac{N}{10}\right\rceil}\frac{10}{N}\kappa\geq \kappa,
\end{equation}
a contradiction to the fact that $\|\zeta_{n_j}\|_{L^1(\Pi)}=\kappa.$ Hence the proof is finished.

\end{proof}

\subsection{Asymptotic behavior}
In this section, we study the asymptotic behavior of the maximizers as $\varepsilon\to0^+$. 
With Proposition \ref{lam202} at hand, the unboundedness of the upper half-plane does not cause trouble anymore, hence the subsequent proofs are analogous to the ones in \cite{T}.

First we prove a better  bound for the size of the vortex core.

\begin{proposition}\label{lam203}
There exists some $C_{13}>0$ such that 
\[{\rm diam}(V_\zeta)\leq C_{13}\varepsilon\quad \forall\,\zeta\in\Gamma_{\varepsilon,i_0}.\]

\end{proposition}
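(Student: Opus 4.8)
The goal is to improve the rough diameter bound of Proposition \ref{lam202} (which gives $l_\zeta \leq C_{12}$, a constant independent of $\varepsilon$) to the sharp scaling $\mathrm{diam}(V_\zeta) \leq C_{13}\varepsilon$. Now that the vortex core is confined to a region of uniformly bounded size, the regular part of the Green function $\ln|\mathbf x - \bar{\mathbf y}|$ is bounded on $V_\zeta \times V_\zeta$, so the analysis reduces essentially to the singular logarithmic kernel $\ln\frac{1}{|\mathbf x - \mathbf y|}$, just as in the bounded-domain setting of Turkington \cite{T}. I would exploit the lower bound \eqref{lziii} together with a bootstrap/iteration in the spirit of the argument just completed, but now optimized to capture the correct $\varepsilon$ scale.

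The plan is as follows. First I would record that, by Proposition \ref{lam202}, we have $|\mathbf x - \bar{\mathbf y}| \leq l_\zeta \leq C_{12}$ for all $\mathbf x, \mathbf y \in V_\zeta$, so the reflected-kernel contribution $-\int_{V_\zeta}\ln|\mathbf x - \bar{\mathbf y}|\,\zeta(\mathbf y)\,d\mathbf y$ is bounded below by a constant independent of $\varepsilon$. Combining this with the pointwise lower bound \eqref{lziii}, $\mathcal G\zeta(\mathbf x) \geq -\frac{\kappa}{2\pi}\ln\varepsilon - C_6$ on $V_\zeta$, I obtain
\[
\int_\Pi \ln\frac{\varepsilon}{|\mathbf x - \mathbf y|}\,\zeta(\mathbf y)\,d\mathbf y \geq -C
\]
for all $\mathbf x \in V_\zeta$, with $C$ independent of $\varepsilon$. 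Next, fixing any $\mathbf x \in V_\zeta$ and denoting $d_\zeta = \mathrm{diam}(V_\zeta)$, I would split the integral over $\{|\mathbf x - \mathbf y| \leq d_\zeta/M\}$ and its complement within $V_\zeta$, for a large fixed integer $M$. On the near region I use the rearrangement inequality \eqref{rri1} to bound the logarithmic integral by $\int_{\mathbb R^2}\ln\frac{1}{|\mathbf y|}\varrho(\mathbf y)\,d\mathbf y$ (a constant), exactly as in \eqref{rp2}; on the far region I bound $\ln\frac{\varepsilon}{|\mathbf x - \mathbf y|} \leq \ln\frac{M\varepsilon}{d_\zeta}$. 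This produces a Turkington-type inequality controlling the mass $\int_{|\mathbf x - \mathbf y| \geq d_\zeta/M}\zeta\,d\mathbf y$ in terms of $\ln(d_\zeta/\varepsilon)$.

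The key mechanism is that if $d_\zeta/\varepsilon \to +\infty$ along some sequence, the far-field mass is forced to stay bounded away from $\kappa$, hence a definite fraction $\theta\kappa > 0$ of the vorticity concentrates in every ball of radius $d_\zeta/M$ centered at a core point. I would then run the same geometric contradiction as in the proof of Proposition \ref{lam202}: whichever of the two directions realizes the diameter (the $x_2$-extent forcing $I(\zeta) = i_0$ to blow up, or the $x_1$-extent contradicting $\|\zeta\|_{L^1} = \kappa$ via the Steiner-symmetry covering argument over the shifts $a_k$) yields a contradiction — except that now the contradiction is quantitative in $d_\zeta/\varepsilon$ rather than in $l_\zeta$ alone. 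The upshot is that $d_\zeta/\varepsilon$ must remain bounded by a constant $C_{13}$ depending only on $\varrho$ and $i_0$.

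I expect the main obstacle to be bookkeeping the correct balance in the far-field estimate so that the $\varepsilon$-scale, rather than merely an $O(1)$ bound, emerges. In Proposition \ref{lam202} the reflected kernel was controlled only by $-\kappa\ln l_n$, which was too lossy to detect the right scale; here the boundedness of $l_\zeta$ lets me replace that term by a genuine constant, and the sharpness hinges on the fact that the dominant balance in $\int \ln\frac{\varepsilon}{|\mathbf x-\mathbf y|}\zeta \geq -C$ is between $\ln\varepsilon$ and the self-energy of a mass-$\kappa$ blob of diameter $d_\zeta$, whose natural length scale is precisely $\varepsilon$ once the mass is concentrated. Making this rigorous requires ensuring the concentrated fraction $\theta\kappa$ is strictly positive uniformly, which follows by choosing $M$ large depending only on $\kappa$, and then the geometric covering argument closes as before, delivering $\mathrm{diam}(V_\zeta) \leq l_\zeta \leq C_{13}\varepsilon$ via \eqref{due7}.
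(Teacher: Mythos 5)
Your proposal follows the paper's mechanism almost exactly up to the concentration estimate: use \eqref{lziii} together with Proposition \ref{lam202} (which turns the reflected-kernel term $-\int_{V_\zeta}\ln|\mathbf x-\bar{\mathbf y}|\,\zeta(\mathbf y)\,d\mathbf y$ into a genuine constant $\geq-\kappa\ln C_{12}$) to obtain $\int_\Pi\ln\frac{\varepsilon}{|\mathbf x-\mathbf y|}\zeta(\mathbf y)\,d\mathbf y\geq-C$ on $V_\zeta$, split the logarithmic potential, bound the near part by a constant via the rearrangement inequality \eqref{rri1}, and bound the far part by the logarithm of the splitting radius over $\varepsilon$ times the far mass. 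The only difference so far is parametric: the paper splits at a fixed radius $R\varepsilon$ with $R$ chosen once (depending only on $\varrho$ and $i_0$) so that $\int_{|\mathbf x-\mathbf y|\leq R\varepsilon}\zeta\,d\mathbf y\geq\tfrac{2}{3}\kappa$ for every $\mathbf x\in V_\zeta$, while you split at $d_\zeta/M$ and argue by contradiction. Both versions of this step are sound.

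The step that would fail is your endgame. You propose to close by rerunning the geometric dichotomy of Proposition \ref{lam202}: either the $x_2$-extent dominates and the impulse blows up, or the $x_1$-extent dominates and the Steiner-symmetric covering over the shifts $a_k$ overcounts the mass. The impulse branch works there only because $l_{\zeta_n}\to+\infty$, so that $I(\zeta_n)\gtrsim\kappa\,l_{\zeta_n}$ diverges. Here Proposition \ref{lam202} already gives $d_\zeta\leq l_\zeta\leq C_{12}$, so in the regime you must exclude ($d_{\zeta_n}/\varepsilon_n\to\infty$ with $d_{\zeta_n}$ bounded, possibly even tending to $0$) the impulse contribution stays $O(1)$ and produces no contradiction with $I(\zeta)=i_0$; if the diameter happens to be realized in the $x_2$ direction, the covering branch is not triggered either, and the dichotomy does not close. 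The repair is immediate and is exactly what the paper does: since every ball of radius $R\varepsilon$ centered at a point of $V_\zeta$ carries at least $\tfrac{2}{3}\kappa$ of the mass, if ${\rm diam}(V_\zeta)>2R\varepsilon$ one picks $\mathbf x_1,\mathbf x_2\in V_\zeta$ with $|\mathbf x_1-\mathbf x_2|>2R\varepsilon$ and obtains two disjoint balls of total mass at least $\tfrac{4}{3}\kappa>\|\zeta\|_{L^1(\Pi)}=\kappa$, a contradiction; the same pigeonhole works in your parametrization with balls of radius $d_\zeta/M$ around two points nearly realizing the diameter, provided $M\geq 5$. This two-line argument should replace the (A1)/(A2) machinery, which is neither needed nor sufficient at this stage.
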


\begin{proof}
Fix $\zeta\in\Gamma_{\varepsilon,i_0}$. By \eqref{lziii} it holds for any $\mathbf x\in V_\zeta$ that 
\[\mathcal G\zeta (\mathbf x)\geq -\frac{\kappa}{2\pi}\ln\varepsilon -C_6,\]
or equivalently,
\begin{equation}\label{twis}
\int_\Pi\ln\frac{\varepsilon }{|\mathbf x-\mathbf y|}\zeta(\mathbf y)d\mathbf y\geq -\int_{V_\zeta}\ln|\mathbf x-\bar{\mathbf y}|\zeta(\mathbf y)d\mathbf y-2\pi C_6.
\end{equation}
Taking into account Proposition \ref{lam202}, we  have 
\begin{equation}\label{twis3}
\int_\Pi\ln\frac{\varepsilon }{|\mathbf x-\mathbf y|}\zeta(\mathbf y)d\mathbf y\geq -\kappa\ln C_{12}-2\pi C_6.
\end{equation}
As in the proof of Proposition \ref{lam202}, we divide the integral in \eqref{twis3} into two parts
\begin{align*}
\int_\Pi\ln\frac{\varepsilon }{|\mathbf x-\mathbf y|}\zeta(\mathbf y)d\mathbf y&=\int_{|\mathbf x-\mathbf y|\leq R\varepsilon}\ln\frac{\varepsilon }{|\mathbf x-\mathbf y|}\zeta(\mathbf y)d\mathbf y+\int_{|\mathbf x-\mathbf y|\geq R\varepsilon}\ln\frac{\varepsilon }{|\mathbf x-\mathbf y|}\zeta(\mathbf y)d\mathbf y,
\end{align*}
where $R>1$ is  to be determined later.
As in \eqref{rp2},  for the first integral  it holds that  
\begin{equation}\label{may1}
\int_{|\mathbf x-\mathbf y|\leq R\varepsilon}\ln\frac{\varepsilon }{|\mathbf x-\mathbf y|}\zeta(\mathbf y)d\mathbf y \leq \int_{\Pi}\ln\frac{\varepsilon}{|\mathbf x-\mathbf y|}\zeta(\mathbf y)d\mathbf y\leq \int_{\mathbb R^2}\ln\frac{1}{|\mathbf y|}\varrho (\mathbf y)d\mathbf y\leq K_1,
\end{equation}
 where $K_1>0$ depends only on $\varrho$. For the second integral, we have
\begin{equation}\label{may2}
\int_{|\mathbf x-\mathbf y|\geq R\varepsilon}\ln\frac{\varepsilon }{|\mathbf x-\mathbf y|}\zeta(\mathbf y)d\mathbf y\leq -\ln R\int_{|\mathbf x-\mathbf y|\geq R\varepsilon}\zeta(\mathbf y)d\mathbf y.
\end{equation}
Combining \eqref{twis3}, \eqref{may1} and \eqref{may2} we obtain
\begin{equation}\label{omi1}
\int_{|\mathbf x-\mathbf y|\geq R\varepsilon}\zeta(\mathbf y)d\mathbf y\leq \frac{\kappa\ln C_{12}+2\pi C_6+K_1}{\ln R}.
\end{equation}
Choose  $R$ to be sufficiently large, depending only on $\varrho$ and $i_0$, such that
\[\frac{\kappa\ln C_{12}+2\pi C_6+K_1}{\ln R}\leq \frac{\kappa}{3}.\]
Then we get from \eqref{omi1} that 
\[\int_{|\mathbf x-\mathbf y|\geq R\varepsilon}\zeta(\mathbf y)d\mathbf y\leq \frac{\kappa}{3},\]
or equivalently,
\begin{equation}\label{may5}
\int_{|\mathbf x-\mathbf y|\leq R\varepsilon}\zeta(\mathbf y)d\mathbf y\geq \frac{2}{3}\kappa.
\end{equation}
Since \eqref{may5} holds for arbitrary $\mathbf x\in V_\zeta,$   we can easily show that ${\rm diam}(V_\zeta)\leq 2R\varepsilon.$ In fact, suppose otherwise ${\rm diam}(V_\zeta)>2 R\varepsilon.$ Then we can choose two points $\mathbf x_1,\mathbf x_2\in V_\zeta$ such that $|\mathbf x_1-\mathbf x_2|>2R\varepsilon.$ It is clear that $B_{R\varepsilon}(\mathbf x_1)\cap B_{R\varepsilon}(\mathbf x_2)=\varnothing$, therefore
\[\int_\Pi\zeta(\mathbf y)d\mathbf y\geq \int_{|\mathbf x_1-\mathbf y|\leq R\varepsilon}\zeta(\mathbf y)d\mathbf y+\int_{|\mathbf x_2-\mathbf y|\leq R\varepsilon}\zeta(\mathbf y)d\mathbf y\geq \frac{4}{3}\kappa,\]
a  contradiction to the fact that $\|\zeta\|_{L^1(\Pi)}=\kappa.$ The proof is finished by choosing $C_{13}=2R.$
\end{proof}

For any $\zeta\in\Gamma^0_{\varepsilon,i_0}$ its center is $\hat{\mathbf x}=(0,i_0/\kappa).$
From this fact and  Proposition \ref{lam203}, we immediately obtain

  \begin{lemma}\label{cids}
Any   $\zeta\in \Gamma^0_{\varepsilon,i_0}$ converges uniformly to   $\kappa\delta_{\hat{\mathbf x}}$ in the distributional sense as $\varepsilon\to0^+$. More precisely,  for any $\varphi\in C(\Pi)$ and   any   $\epsilon>0,$ there exists some $\delta>0,$ depending only on $\varrho,i_0,\varphi$ and $\epsilon$, such that
 \[\left|\int_\Pi \varphi \zeta d\mathbf x-\kappa\varphi(\hat{\mathbf x}) \right|<\epsilon\quad\forall\,\zeta\in\Gamma_{\varepsilon,i_0}^0.\]

 \end{lemma}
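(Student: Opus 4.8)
The plan is to reduce the statement to a routine localization estimate, exploiting two facts already established for members of $\Gamma^0_{\varepsilon,i_0}$. First, the total mass is fixed: since $\zeta\in\mathcal R(\varrho^\varepsilon)$ we have $\int_\Pi\zeta\,d\mathbf x=\|\varrho^\varepsilon\|_{L^1}=\kappa$. Second — combining the sharp diameter bound of Proposition \ref{lam203} with the center-of-mass identity recorded in Remark \ref{rmk3} — the support collapses uniformly onto the single point $\hat{\mathbf x}=(0,i_0/\kappa)$, namely $V_\zeta\subset B_{C_{13}\varepsilon}(\hat{\mathbf x})$ for every $\zeta\in\Gamma^0_{\varepsilon,i_0}$, with $C_{13}$ independent of both $\zeta$ and $\varepsilon$. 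Thus $\zeta$ behaves like a unit mass $\kappa$ concentrating at $\hat{\mathbf x}$, and testing against $\varphi$ should return $\kappa\varphi(\hat{\mathbf x})$.

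First I would fix $\varphi\in C(\Pi)$ and $\epsilon>0$. By continuity of $\varphi$ at the single point $\hat{\mathbf x}$, choose $\eta>0$ so that $|\varphi(\mathbf x)-\varphi(\hat{\mathbf x})|<\epsilon/\kappa$ whenever $\mathbf x\in B_\eta(\hat{\mathbf x})\cap\Pi$. Then set $\delta=\eta/C_{13}$, so that for every $\varepsilon\in(0,\delta)$ one has
\[
V_\zeta\subset B_{C_{13}\varepsilon}(\hat{\mathbf x})\subset B_\eta(\hat{\mathbf x})\qquad\forall\,\zeta\in\Gamma^0_{\varepsilon,i_0}.
\]

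Next, since $\int_\Pi\zeta\,d\mathbf x=\kappa$ I would write $\kappa\varphi(\hat{\mathbf x})=\varphi(\hat{\mathbf x})\int_\Pi\zeta\,d\mathbf x$ and estimate, using $\zeta\geq0$ together with ${\rm supp}(\zeta)\subset V_\zeta\subset B_\eta(\hat{\mathbf x})$,
\[
\left|\int_\Pi\varphi\,\zeta\,d\mathbf x-\kappa\varphi(\hat{\mathbf x})\right|
=\left|\int_{V_\zeta}\bigl(\varphi(\mathbf x)-\varphi(\hat{\mathbf x})\bigr)\zeta(\mathbf x)\,d\mathbf x\right|
\leq\frac{\epsilon}{\kappa}\int_\Pi\zeta\,d\mathbf x=\epsilon.
\]
This holds for every $\zeta\in\Gamma^0_{\varepsilon,i_0}$ and every $\varepsilon\in(0,\delta)$, which is precisely the asserted uniform convergence.

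There is essentially no analytic obstacle remaining at this stage: the genuine difficulty — the uniform boundedness of the vortex core and the $O(\varepsilon)$ diameter bound — has already been absorbed into Propositions \ref{lam202} and \ref{lam203}. The only two points meriting care are that the modulus of continuity of $\varphi$ need only be controlled at the single limit point $\hat{\mathbf x}$ (so no global bound on $\varphi$ over the unbounded domain $\Pi$ is invoked, the support being confined to a shrinking ball), and that the threshold $\delta$ depends on $\zeta$ only through the uniform constant $C_{13}$; this independence is exactly what upgrades the pointwise convergence to convergence uniform over the whole family $\Gamma^0_{\varepsilon,i_0}$.
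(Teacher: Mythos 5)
Your argument is correct and coincides with the paper's: the paper likewise derives the lemma ``immediately'' from the $O(\varepsilon)$ diameter bound of Proposition \ref{lam203} combined with the center-of-mass identity of Remark \ref{rmk3} (giving $V_\zeta\subset B_{C\varepsilon}(\hat{\mathbf x})$), the fixed total mass $\kappa$, and continuity of $\varphi$ at $\hat{\mathbf x}$. Nothing is missing.
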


  To study the limit of the traveling speed $\lambda_\zeta$ as $\varepsilon\to0^+,$ we prove a  useful formula below.
\begin{lemma}\label{ffs1}
For any $\zeta\in\Gamma_{\varepsilon,i_0},$ it holds that
\[\lambda_\zeta=\frac{1}{2\pi\kappa}\int_\Pi\int_\Pi\frac{x_2+y_2}{|\mathbf x-\bar{\mathbf y}|^2}\zeta(\mathbf x)\zeta(\mathbf y)d\mathbf xd\mathbf y.\]
\end{lemma}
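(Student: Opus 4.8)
The plan is to derive the formula for $\lambda_\zeta$ by exploiting the fact that each maximizer $\zeta \in \Gamma_{\varepsilon,i_0}$ is, after extension by odd reflection, a steady state of the vorticity equation in a moving frame traveling at speed $\lambda_\zeta$. The quantity $\mathcal G\zeta - \lambda_\zeta x_2$ plays the role of the stream function in the comoving frame, and the relation $\zeta = f_\zeta(\mathcal G\zeta - \lambda_\zeta x_2)$ tells us that $\zeta$ is constant along the level sets of this comoving stream function. I would like to recover $\lambda_\zeta$ by testing the profile equation against a suitably chosen quantity that isolates the linear term $\lambda_\zeta x_2$.

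Concretely, the most natural route is to multiply the relation $\zeta = f_\zeta(\mathcal G\zeta - \lambda_\zeta x_2)$ — or rather to work directly with the weak/steady formulation — by an appropriate vector field and integrate by parts. Since $\nabla^\perp(\mathcal G\zeta - \lambda_\zeta x_2)\cdot\nabla\zeta = 0$ holds (this is precisely the steadiness of the comoving profile, following from $\zeta$ being a function of $\mathcal G\zeta - \lambda_\zeta x_2$), I would integrate this identity against a linear test function, say $x_1$, to pick out the translation. First I would record that $\nabla^\perp \mathcal G\zeta \cdot \nabla\zeta = \lambda_\zeta \nabla^\perp x_2 \cdot \nabla\zeta = \lambda_\zeta \partial_{x_1}\zeta$ (using $\nabla^\perp = (\partial_{x_2}, -\partial_{x_1})$, so $\nabla^\perp x_2 = (1,0)$). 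Then I would integrate both sides over $\Pi$ after multiplying by a cutoff or by $x_2$-type weight chosen so that the right-hand side produces $\kappa \lambda_\zeta$ (the total mass times the speed), while the left-hand side, after symmetrizing in $\mathbf x \leftrightarrow \mathbf y$ using the explicit Green's function $\mathcal G\zeta(\mathbf x) = \frac{1}{2\pi}\int_\Pi \ln\frac{|\mathbf x - \bar{\mathbf y}|}{|\mathbf x - \mathbf y|}\zeta(\mathbf y)\,d\mathbf y$, collapses to the stated double integral. The key computation is that differentiating the kernel and symmetrizing annihilates the singular $\ln|\mathbf x - \mathbf y|$ contribution (it is antisymmetric under the swap), leaving only the regular part $\ln|\mathbf x - \bar{\mathbf y}|$ whose gradient yields the factor $(x_2 + y_2)/|\mathbf x - \bar{\mathbf y}|^2$.

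In more detail, I expect the cleanest derivation to come from the identity $\int_\Pi \partial_{x_1}\zeta \cdot (\text{something}) = 0$ combined with a direct evaluation. Starting from $\lambda_\zeta \partial_{x_1}\zeta = \nabla^\perp\mathcal G\zeta \cdot \nabla\zeta = \partial_{x_2}\mathcal G\zeta \, \partial_{x_1}\zeta - \partial_{x_1}\mathcal G\zeta \, \partial_{x_2}\zeta$, I would integrate against a linear weight and use the boundary vanishing (guaranteed by the bounded support from Proposition~\ref{lam202} and Proposition~\ref{lam203}). Integrating the relation $\lambda_\zeta \int_\Pi \partial_{x_1}\zeta \, w\, d\mathbf x = \int_\Pi (\nabla^\perp\mathcal G\zeta\cdot\nabla\zeta)\, w \, d\mathbf x$ with the choice $w = x_1$ gives $-\lambda_\zeta \kappa$ on the left (after integration by parts, since $\int_\Pi \partial_{x_1}\zeta \cdot x_1 = -\int_\Pi \zeta = -\kappa$), and on the right, after moving the derivatives onto the kernel and symmetrizing the resulting double integral in $\mathbf x$ and $\mathbf y$, I would obtain $-\frac{1}{2\pi}\int_\Pi\int_\Pi \frac{x_2 + y_2}{|\mathbf x - \bar{\mathbf y}|^2}\zeta(\mathbf x)\zeta(\mathbf y)\,d\mathbf x\,d\mathbf y$. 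Dividing through by $-\kappa$ yields the claimed formula.

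The main obstacle I anticipate is the careful symmetrization and the justification of integration by parts. The singular kernel $\ln|\mathbf x - \mathbf y|$ is integrable but its gradient is only weakly singular, so I must verify that all the manipulations (differentiation under the integral sign, interchange of integration order, and the cancellation of the singular diagonal contribution via antisymmetry) are legitimate given only $\zeta \in L^p$ with $p > 2$ and bounded support. The antisymmetry argument that kills the $-\ln|\mathbf x - \mathbf y|$ term relies on the fact that $\partial_{x_1}\ln|\mathbf x - \mathbf y| = -\partial_{y_1}\ln|\mathbf x - \mathbf y|$ together with the symmetry of the measure $\zeta(\mathbf x)\zeta(\mathbf y)\,d\mathbf x\,d\mathbf y$ under swapping $\mathbf x \leftrightarrow \mathbf y$; making this precise while staying inside the integrable regime is the delicate point. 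A cleaner alternative, which I would pursue if the direct symmetrization proves awkward, is to invoke a weak form of the steady profile equation directly — testing the distributional formulation in Definition~\ref{def1}(i) against the time-independent vector field generated by $x_1$ in the comoving frame — which packages the integration-by-parts bookkeeping into the already-established weak solution framework and avoids pointwise differentiation of the kernel.
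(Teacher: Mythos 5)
Your proposal is correct, and its second half coincides with the paper's: both reduce the claim to the identity $\kappa\lambda_\zeta=\int_\Pi\zeta\,\partial_{x_2}\mathcal G\zeta\,d\mathbf x$ and then split $\partial_{x_2}\mathcal G\zeta$ as in \eqref{c90} into the reflected part, which produces the kernel $(x_2+y_2)/|\mathbf x-\bar{\mathbf y}|^2$, and the free-space part, which is annihilated upon symmetrizing in $\mathbf x\leftrightarrow\mathbf y$. Where you differ is in how that intermediate identity is obtained. The paper works with the primitive $F_\zeta$ of $f_\zeta$ defined in \eqref{u1s1}: it invokes (via the argument of Lemma 9 in \cite{B11}) that $F_\zeta(\mathcal G\zeta-\lambda_\zeta x_2)\in W^{1,p}(\Pi)$ with the chain rule $\partial_{x_2}\bigl(F_\zeta(\mathcal G\zeta-\lambda_\zeta x_2)\bigr)=\zeta(\partial_{x_2}\mathcal G\zeta-\lambda_\zeta)$, and integrates this exact derivative of a compactly supported function over $\Pi$ to get \eqref{crll}. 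You instead test the weak steady transport equation $\nabla^\perp(\mathcal G\zeta-\lambda_\zeta x_2)\cdot\nabla\zeta=0$ against $x_1$ and integrate by parts, using $\nabla^\perp x_2=(1,0)$ and the divergence-free structure $\nabla^\perp\mathcal G\zeta\cdot\nabla\zeta=\nabla\cdot(\zeta\nabla^\perp\mathcal G\zeta)$; your sign bookkeeping ($-\kappa\lambda_\zeta$ on both sides) is consistent. This is a legitimate alternative: it trades the Sobolev regularity and chain rule for $F_\zeta$ against the need to rigorously extract the steady weak formulation from Definition \ref{def1}(i) applied to the traveling wave of item (4) of Theorem \ref{thm1} (already available from Proposition \ref{prop1}), and to replace the non-compactly-supported test function $x_1$ by a cutoff equal to $x_1$ near the support of $\zeta$ -- harmless here since $V_\zeta$ is compactly contained in $\Pi$ by Proposition \ref{lam203}. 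The justification of the kernel differentiation, the Fubini interchange, and the antisymmetric cancellation that you flag as delicate goes through for $\zeta\in L^1\cap L^p$, $p>2$, with bounded support, exactly as in the paper's computation.
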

\begin{proof}
Fix $\zeta\in\Gamma_{\varepsilon,i_0}$.  Recall that  $F_\zeta$ is defined by \eqref{u1s1}. By an almost identical argument as in the proof of Lemma 9 in \cite{B11}, we can show that  $F_\zeta(\mathcal G\zeta-\lambda_\zeta x_2)\in W^{1,p}(\Pi)$ and, moreover,  the following chain rule  holds true
\begin{equation}\label{crl}
 {\partial}_{x_2}\left(F_\zeta(\mathcal G\zeta-\lambda_\zeta x_2)\right)=f_\zeta(\mathcal G\zeta-\lambda_\zeta x_2) {\partial}_{x_2}(\mathcal G\zeta-\lambda_\zeta x_2)=\zeta(\partial_{x_2}\mathcal G\zeta-\lambda_\zeta).
 \end{equation}
Integrating both sides of \eqref{crl} over $\Pi$,  and taking into account the fact that $F_\zeta(\mathcal G\zeta-\lambda_\zeta x_2)$ has bounded support, we get
\begin{equation}\label{crll}
 \lambda_\zeta=\frac{1}{\kappa}\int_\Pi\zeta\partial_{x_2}\mathcal G\zeta d\mathbf x.
 \end{equation}
On the other hand, by a direct calculation we have  
 \begin{equation}\label{c90}
 \partial_{x_2}\mathcal G\zeta(\mathbf x)=\frac{1}{2\pi}\int_{\Pi}\frac{x_2+y_2}{|\mathbf x-\bar{\mathbf y}|^2}\zeta(\mathbf y)d\mathbf y+\frac{1}{2\pi}\int_\Pi \frac{y_2-x_2}{|\mathbf x-\mathbf y|^2}\zeta(\mathbf y)d\mathbf y.
 \end{equation}
Inserting \eqref{c90} into \eqref{crll} we obtain
\begin{equation}\label{h99}
\lambda_\zeta=\frac{1}{2\pi \kappa}\int_\Pi \int_{\Pi}\frac{x_2+y_2}{|\mathbf x-\bar{\mathbf y}|^2}\zeta(\mathbf x)\zeta(\mathbf y)d\mathbf xd\mathbf y+\frac{1}{2\pi\kappa}\int_\Pi\int_\Pi \frac{y_2-x_2}{|\mathbf x-\mathbf y|^2}\zeta(\mathbf x)\zeta(\mathbf y)d\mathbf xd\mathbf y.
\end{equation}
Observe that  the second integral on the right-hand side of \eqref{h99} is exactly zero due to the anti-symmetry of the integrand. Hence the proof is finished.
\end{proof}

Now we are ready to estimate the traveling speed $\lambda_\zeta$.
\begin{proposition}\label{prop20}
As $\varepsilon\to 0^+$ , $\lambda_\zeta\to  {\kappa^2}/(4\pi i_0)$ uniformly with respect to the choice of  $\zeta\in\Gamma_{\varepsilon,i_0}$.
\end{proposition}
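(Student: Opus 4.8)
The plan is to pass to the limit in the integral representation of the traveling speed provided by Lemma \ref{ffs1},
\[
\lambda_\zeta=\frac{1}{2\pi\kappa}\int_\Pi\int_\Pi \frac{x_2+y_2}{|\mathbf x-\bar{\mathbf y}|^2}\zeta(\mathbf x)\zeta(\mathbf y)\,d\mathbf x\,d\mathbf y,
\]
by exploiting the concentration of the vortex core established in Proposition \ref{lam203}. Set $g(\mathbf x,\mathbf y)=(x_2+y_2)/|\mathbf x-\bar{\mathbf y}|^2$. First I would note that $g$ is invariant under a common translation of $\mathbf x$ and $\mathbf y$ parallel to the $x_1$-axis, since it depends only on $x_2$, $y_2$ and $x_1-y_1$; hence by Remark \ref{rmk1} the value of $\lambda_\zeta$ is unchanged when $\zeta$ is replaced by any of its $x_1$-translates, and it suffices to prove the statement for $\zeta\in\Gamma^0_{\varepsilon,i_0}$, whose support I can then localize near $\hat{\mathbf x}=(0,i_0/\kappa)$.

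For such $\zeta$, Remark \ref{rmk3} (which combines Proposition \ref{lam203} with the fact that the vorticity center equals $\hat{\mathbf x}$) gives $\mathrm{supp}(\zeta)\subset\overline{B_{C\varepsilon}(\hat{\mathbf x})}$, with $C$ depending only on $\varrho$ and $i_0$. The crucial observation is that $g$ has no singularity near $(\hat{\mathbf x},\hat{\mathbf x})$: because $\hat x_2=i_0/\kappa>0$, for $\varepsilon$ small enough every support point satisfies $x_2\ge i_0/(2\kappa)$, and
\[
|\mathbf x-\bar{\mathbf y}|^2=(x_1-y_1)^2+(x_2+y_2)^2\ge (x_2+y_2)^2\ge \left(\frac{i_0}{\kappa}\right)^2
\]
stays bounded away from zero. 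Consequently $g$ is Lipschitz on the fixed region $\overline{B_{i_0/(2\kappa)}(\hat{\mathbf x})}\times\overline{B_{i_0/(2\kappa)}(\hat{\mathbf x})}$ with some constant $L$ independent of $\varepsilon$, so that $|g(\mathbf x,\mathbf y)-g(\hat{\mathbf x},\hat{\mathbf x})|\le L(|\mathbf x-\hat{\mathbf x}|+|\mathbf y-\hat{\mathbf x}|)\le 2LC\varepsilon$ for all $\mathbf x,\mathbf y\in\mathrm{supp}(\zeta)$.

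A direct computation gives $g(\hat{\mathbf x},\hat{\mathbf x})=(2i_0/\kappa)/(4i_0^2/\kappa^2)=\kappa/(2i_0)$. Multiplying the pointwise estimate by $\zeta(\mathbf x)\zeta(\mathbf y)$, integrating, and using $\int_\Pi\zeta\,d\mathbf x=\kappa$ (so that the double integral of $\zeta(\mathbf x)\zeta(\mathbf y)$ equals $\kappa^2$), I would obtain
\[
\left|\int_\Pi\int_\Pi g(\mathbf x,\mathbf y)\zeta(\mathbf x)\zeta(\mathbf y)\,d\mathbf x\,d\mathbf y-\frac{\kappa^3}{2i_0}\right|\le 2LC\kappa^2\varepsilon,
\]
and dividing by $2\pi\kappa$ yields $\bigl|\lambda_\zeta-\kappa^2/(4\pi i_0)\bigr|\le (LC\kappa/\pi)\varepsilon$. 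Since the right-hand side is independent of the particular maximizer and tends to $0$, this gives the asserted uniform convergence, and one simply chooses $\varepsilon_1\in(0,\varepsilon_0)$ making $(LC\kappa/\pi)\varepsilon_1$ smaller than any prescribed tolerance. The only point requiring care is that the Lipschitz constant $L$ be uniform in $\varepsilon$; this is automatic once the support is trapped in a fixed ball away from $\partial\Pi$, which holds for all sufficiently small $\varepsilon$, so no genuine obstacle remains here — all the heavy lifting was already done in Proposition \ref{lam202}. In effect the statement amounts to testing the weak convergence $\zeta\rightharpoonup\kappa\delta_{\hat{\mathbf x}}$ from Lemma \ref{cids} against the bounded continuous kernel $g$.
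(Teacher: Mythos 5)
Your argument is correct and follows essentially the same route as the paper: both start from the integral formula of Lemma \ref{ffs1} and exploit the concentration of the support near $\hat{\mathbf x}$ (the paper phrases this as testing the uniform weak convergence $\zeta\to\kappa\delta_{\hat{\mathbf x}}$ of Lemma \ref{cids} against the continuous kernel, exactly as you note at the end). Your version is slightly more careful in two welcome respects — it makes explicit the reduction from $\Gamma_{\varepsilon,i_0}$ to $\Gamma^0_{\varepsilon,i_0}$ via the $x_1$-translation invariance of the kernel, and it yields an explicit $O(\varepsilon)$ rate — but these are refinements of the same proof, not a different one.
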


\begin{proof}
Using Lemma \ref{cids} and Lemma \ref{ffs1}, we have
\begin{align*}
\lim_{\varepsilon\to0^+}\lambda_\zeta&=\lim_{\varepsilon\to0^+}\left(\frac{1}{2\pi\kappa}\int_\Pi\int_\Pi\frac{x_2+y_2}{|\mathbf x-\bar{\mathbf y}|^2}\zeta(\mathbf x)\zeta(\mathbf y)d\mathbf xd\mathbf y\right)\\
&=\frac{\kappa}{2\pi } \frac{x_2+y_2}{|\mathbf x-\bar{\mathbf y}|^2} \bigg|_{\mathbf x=\hat{\mathbf x},\,\mathbf y=\hat{\mathbf x}}\\
&=\frac{\kappa^2}{4\pi i_0}.
\end{align*}
Moreover, the convergence is uniform with respect to the choice of $\zeta\in\Gamma_{\varepsilon,i_0}$ since the convergence $\zeta\to\kappa\delta_{\hat{\mathbf x}}$ in Lemma \ref{cids} is uniform.
\end{proof}

As in Theorem \ref{thm1}, for $\zeta\in \Gamma^0_{\varepsilon,i_0}$, extend $\zeta$ to $\mathbb R^2$ such that $\zeta=0$ in the lower half-plane and define
\[\nu^{\zeta,\varepsilon}(\mathbf x)=\varepsilon^2\zeta(\varepsilon\mathbf x+\hat{\mathbf x}).\]

\begin{proposition}\label{prop60}
As  $\varepsilon\to0^+,$ $\nu^ {\zeta,\varepsilon}\to\varrho$ in $L^p(\mathbb R^2)$,  uniformly with respect to the choice of $\zeta\in \Gamma^0_{\varepsilon,i_0}$.

\end{proposition}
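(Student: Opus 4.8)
The plan is to recast the statement as a concentration result for the rescaled vorticities $\nu^{\zeta,\varepsilon}$ and to pin down the limit through the variational characterization combined with the Burchard--Guo inequality (Lemma \ref{bgu}). First I would record the structural features of $\nu^{\zeta,\varepsilon}$. A change of variables shows that $\zeta\in\mathcal R(\varrho^\varepsilon)$ forces $\nu^{\zeta,\varepsilon}\in\mathcal R(\varrho)$; in particular $\|\nu^{\zeta,\varepsilon}\|_{L^p(\mathbb R^2)}=\|\varrho\|_{L^p(\mathbb R^2)}$ and $\int_{\mathbb R^2}\nu^{\zeta,\varepsilon}=\kappa$ for every admissible $\zeta$, while the symmetric-decreasing rearrangement satisfies $(\nu^{\zeta,\varepsilon})^*=\varrho^*=\varrho$ by (H3). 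Since the vorticity center of $\zeta\in\Gamma^0_{\varepsilon,i_0}$ is $\hat{\mathbf x}$ (Remark \ref{rmk3}) and $\mathrm{diam}(V_\zeta)\le C_{13}\varepsilon$ (Proposition \ref{lam203}), we have $V_\zeta\subset\overline{B_{C_{13}\varepsilon}(\hat{\mathbf x})}$, hence $\mathrm{supp}(\nu^{\zeta,\varepsilon})\subset\overline{B_{C_{13}}(\mathbf 0)}$ and $\int_{\mathbb R^2}\mathbf u\,\nu^{\zeta,\varepsilon}(\mathbf u)\,d\mathbf u=\mathbf 0$. Thus $\{\nu^{\zeta,\varepsilon}\}$ is bounded in $L^p$, equi-compactly supported, and centered. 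By a standard contradiction argument it suffices to show that every sequence $\varepsilon_n\to0^+$, $\zeta_n\in\Gamma^0_{\varepsilon_n,i_0}$ admits a subsequence along which $\nu_n:=\nu^{\zeta_n,\varepsilon_n}\to\varrho$ strongly in $L^p(\mathbb R^2)$.

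Next I would exploit maximality after rescaling the energy. Setting $\mathbf x=\varepsilon\mathbf u+\hat{\mathbf x}$, $\mathbf y=\varepsilon\mathbf v+\hat{\mathbf x}$ and using $\int\nu^{\zeta,\varepsilon}=\kappa$, one obtains
\[E(\zeta)=-\frac{\kappa^2}{4\pi}\ln\varepsilon+A_\varepsilon(\nu^{\zeta,\varepsilon})+\mathcal E(\nu^{\zeta,\varepsilon}),\]
where $\mathcal E(\mu)=\frac{1}{4\pi}\int_{\mathbb R^2}\int_{\mathbb R^2}\ln\frac{1}{|\mathbf u-\mathbf v|}\mu(\mathbf u)\mu(\mathbf v)\,d\mathbf u\,d\mathbf v$ and $A_\varepsilon(\mu)=\frac{1}{4\pi}\int_{\mathbb R^2}\int_{\mathbb R^2}\ln|\mathbf x-\bar{\mathbf y}|\,\mu(\mathbf u)\mu(\mathbf v)\,d\mathbf u\,d\mathbf v$. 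Because all supports lie in a fixed ball and $\mathbf x-\bar{\mathbf y}\to(0,2i_0/\kappa)$ uniformly as $\varepsilon\to0^+$, we have $A_\varepsilon(\mu)\to\frac{\kappa^2}{4\pi}\ln(2i_0/\kappa)$ uniformly over the family. The comparison function $\bar v=\varrho^\varepsilon(\cdot-\hat{\mathbf x})\mathbf 1_\Pi$ from Lemma \ref{lam2} lies in $\mathcal S_{\varepsilon,i_0}$ and its rescaling is exactly $\varrho$; hence maximality $E(\zeta_n)\ge E(\bar v)$, after cancelling the $-\frac{\kappa^2}{4\pi}\ln\varepsilon_n$ terms, gives
\[\mathcal E(\nu_n)\ge\mathcal E(\varrho)+\bigl(A_{\varepsilon_n}(\varrho)-A_{\varepsilon_n}(\nu_n)\bigr)=\mathcal E(\varrho)+o(1),\]
so $\liminf_{n}\mathcal E(\nu_n)\ge\mathcal E(\varrho)$.

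Then I would pass to the limit. By weak compactness, along a subsequence $\nu_n\rightharpoonup\nu_*$ in $L^p$ (hence in $L^2$, the supports being fixed and $p>2$), while $\nu_n^*=\varrho\rightharpoonup\varrho$. The crux, which I regard as the \emph{main obstacle}, is the upper semicontinuity $\limsup_n\mathcal E(\nu_n)\le\mathcal E(\nu_*)$: the logarithmic kernel is unbounded on the diagonal, and this is exactly where the $L^p$ control enters. I would split $\ln\frac{1}{|\mathbf z|}=K_\delta(\mathbf z)+\bigl(\ln\frac{\delta}{|\mathbf z|}\bigr)^{+}$ with $K_\delta=\min\{\ln\frac1{|\mathbf z|},\ln\frac1\delta\}$ bounded and continuous. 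The bounded part passes to the limit by an Arzel\`a--Ascoli argument applied to the equicontinuous, uniformly bounded potentials $\mathbf u\mapsto\int_{\mathbb R^2}K_\delta(\mathbf u-\mathbf v)\nu_n(\mathbf v)\,d\mathbf v$, giving $\iint K_\delta\nu_n\nu_n\to\iint K_\delta\nu_*\nu_*$; the singular remainder is nonnegative and, by H\"older with $q=p/(p-1)$ and $\int_{|\mathbf z|<\delta}(\ln\frac1{|\mathbf z|})^{q}\,d\mathbf z<\infty$, is bounded by $C\|\nu_n\|_{L^1}\|\nu_n\|_{L^p}\,\omega(\delta)$ with $\omega(\delta)\to0$, uniformly in $n$. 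Letting $\delta\to0^+$ yields the upper semicontinuity. Chaining the estimates,
\[\mathcal E(\varrho)\le\liminf_n\mathcal E(\nu_n)\le\limsup_n\mathcal E(\nu_n)\le\mathcal E(\nu_*)\le\mathcal E(\varrho),\]
where the last inequality is Lemma \ref{bgu} with $u=\nu_*$, $v=\varrho$. All terms coincide, so the equality case of Lemma \ref{bgu} forces $\nu_*=\varrho$, i.e.\ $\nu_n\rightharpoonup\varrho$ weakly.

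Finally I would upgrade weak to strong convergence: since $\|\nu_n\|_{L^p(\mathbb R^2)}=\|\varrho\|_{L^p(\mathbb R^2)}$ for all $n$ and $\nu_n\rightharpoonup\varrho$, uniform convexity of $L^p$ with $1<p<\infty$ forces $\nu_n\to\varrho$ strongly in $L^p(\mathbb R^2)$. As the limit $\varrho$ does not depend on the chosen sequence, the contradiction argument of the first step promotes this to the uniform statement: for every $\epsilon>0$ there is $\varepsilon_3\in(0,\varepsilon_0)$, depending only on $\varrho,i_0,p$ and $\epsilon$, such that $\|\nu^{\zeta,\varepsilon}-\varrho\|_{L^p(\mathbb R^2)}<\epsilon$ for all $\varepsilon\in(0,\varepsilon_3)$ and all $\zeta\in\Gamma^0_{\varepsilon,i_0}$.
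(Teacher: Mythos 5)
Your proof is correct and follows essentially the same route as the paper: rescale the energy, compare with $\varrho^{\varepsilon}(\cdot-\hat{\mathbf x})$, extract a weak limit, invoke Lemma \ref{bgu} with $u_n^*\equiv\varrho$ to identify it, and upgrade to strong $L^p$ convergence via the norm equality and uniform convexity. The only real difference is that you spell out the upper semicontinuity of the logarithmic energy by truncating the kernel, a step the paper dispatches in one line.
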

\begin{proof}
  It suffices to show that
for any sequence  $\{\varepsilon_n\}_{n=1}^{+\infty}\subset(0,\varepsilon_0)$ such that   $\varepsilon_n\to0^+$ as $n\to+\infty$, and any sequence  $\{\zeta_n\}_{n=1}^{+\infty}\subset  \Gamma_{\varepsilon_n,i_0}^0$, it holds that
\[\nu^{\zeta_n,\varepsilon_n}\to\varrho  \,\, \mbox{ in $L^p(\mathbb R^2)$ as }  n\to+\infty.\]
For simplicity, below we denote $\nu_n=\nu^{\zeta_n,\varepsilon_n}.$ It is easy to check that  for each $n$
\begin{equation}\label{nupp}
\nu_n\geq 0\mbox { a.e. in } \mathbb R^2,\quad\int_{\mathbb R^2}\mathbf x\nu_n(\mathbf x)d\mathbf x=\mathbf 0, \quad {\rm supp}(\nu_n)\subset  \overline{B_{C_{13}}(\mathbf 0)}.
\end{equation}
Besides, it is also clear that
\begin{equation}\label{lops}
\nu_n^*=\varrho\quad\forall\,n.
\end{equation}
Here $\nu_n^*$ denote the symmetric-decreasing rearrangement of $\nu_n.$
 Up to a subsequence, we assume that 
 \begin{equation}\label{gaila0}
\nu_n\rightharpoonup \nu \,\, \mbox{ in } L^p(\mathbb R^2) \mbox{ as } n\to+\infty.
\end{equation} 
Then it is clear that 
 \begin{equation}\label{ljj}
\int_{\mathbb R^2}\mathbf x\nu(\mathbf x)d\mathbf x=\mathbf 0, \quad {\rm supp}(\nu)\subset  \overline{B_{C_{13}}(\mathbf 0)}.
\end{equation}
Besides, observing that $\{\nu_n\}_{n=1}^{+\infty}$  is   bounded   in $L^2(D)$,  we deduce from \eqref{gaila0} that
 \begin{equation}\label{gaila1}
\nu_n\rightharpoonup \nu \,\, \mbox{ in } L^2(\mathbb R^2) \mbox{ as } n\to+\infty.
\end{equation}

Define
\begin{equation}\label{7ui}
v_n(\mathbf x)=\frac{1}{\varepsilon_n^2}\varrho\left(\frac{\mathbf x-\hat{\mathbf x}}{\varepsilon_n}\right).
\end{equation}
As in the proof of Lemma \ref{wll}, it is easy to check  that $v_n\in\mathcal S_{\varepsilon_n,i_0}$ for each $n$. Therefore 
$E(\zeta_n)\geq E(v_n),$
 that is,
 \begin{equation}
\frac{1}{4\pi}\int_\Pi\int_\Pi\ln\frac{|\mathbf x-\bar{\mathbf y}|}{|\mathbf x-\mathbf y|}\zeta_n(\mathbf x)\zeta_n(\mathbf y)d\mathbf xd\mathbf y \geq \frac{1}{4\pi}\int_\Pi\int_\Pi\ln\frac{|\mathbf x-\bar{\mathbf y}|}{|\mathbf x-\mathbf y|}v_n(\mathbf x)v_n(\mathbf y)d\mathbf xd\mathbf y,
 \end{equation}
or equivalently,
 \begin{equation}\label{sptt5}
 \begin{split}
 &\frac{1}{4\pi}\int_\Pi\int_\Pi\ln\frac{1}{|\mathbf x-\mathbf y|}\zeta_n(\mathbf x)\zeta_n(\mathbf y)d\mathbf xd\mathbf y+\frac{1}{4\pi}\int_\Pi\int_\Pi\ln {|\mathbf x-\bar{\mathbf y}|}\zeta_n(\mathbf x)\zeta_n(\mathbf y)d\mathbf xd\mathbf y\\
 \geq &\frac{1}{4\pi}\int_\Pi\int_\Pi\ln\frac{1}{|\mathbf x-\mathbf y|}v_n(\mathbf x)v_n(\mathbf y)d\mathbf xd\mathbf y+\frac{1}{4\pi}\int_\Pi\int_\Pi\ln|\mathbf x-\bar{\mathbf y}|v_n(\mathbf x)v_n(\mathbf y)d\mathbf xd\mathbf y.
 \end{split}
 \end{equation}
 By the definition of $v_n$, it is easy to check that as $n\to+\infty$
 \begin{equation}\label{3k1}
 \frac{1}{4\pi}\int_\Pi\int_\Pi\ln|\mathbf x-\bar{\mathbf y}|v_n(\mathbf x)v_n(\mathbf y)d\mathbf xd\mathbf y \to \frac{\kappa^2}{4\pi}\ln\left(\frac{2i_0}{\kappa}\right).
 \end{equation}
By  Lemma \ref{cids}, it is also clear that  as $n\to+\infty$
 \begin{equation}\label{3k2}
 \frac{1}{4\pi}\int_\Pi\int_\Pi\ln|\mathbf x-\bar{\mathbf y}|\zeta_n(\mathbf x)\zeta_n(\mathbf y)d\mathbf xd\mathbf y \to \frac{\kappa^2}{4\pi}\ln\left(\frac{2i_0}{\kappa}\right).
  \end{equation}
From  \eqref{3k1} and \eqref{3k2}, we can write \eqref{sptt5} as
  \begin{equation}\label{sptt6}
\int_\Pi\int_\Pi\ln\frac{1}{|\mathbf x-\mathbf y|}\zeta_n(\mathbf x)\zeta_n(\mathbf y)d\mathbf xd\mathbf y\geq \int_\Pi\int_\Pi\ln\frac{1}{|\mathbf x-\mathbf y|}v_n(\mathbf x)v_n(\mathbf y)d\mathbf xd\mathbf y+\gamma_n,
 \end{equation}
 where $\{\gamma_n\}_{n=1}^{+\infty}$ satisfies $\gamma_n\to 0 $ as $n\to+\infty.$
 Taking into account  the relation
 \[
 \zeta_n(\mathbf x)=\frac{1}{\varepsilon^2_n}\nu_n\left(\frac{\mathbf x-\hat{\mathbf x}}{\varepsilon_n}\right),
 \]
and the fact that $\nu_n\rightharpoonup \nu$ in $L^2(\mathbb R^2)$ as $n\to+\infty$, we can  pass  to the limit $n\to+\infty$  in \eqref{sptt6} to get
 \begin{equation}\label{k3k}
 \int_{\mathbb R^2}\int_{\mathbb R^2}\ln\frac{1}{|\mathbf x-\mathbf y|}\nu(\mathbf x)\nu(\mathbf y) d\mathbf xd\mathbf y\geq \int_{\mathbb R^2}\int_{\mathbb R^2}\ln\frac{1}{|\mathbf x-\mathbf y|}\varrho(\mathbf x)\varrho(\mathbf y) d\mathbf xd\mathbf y.
 \end{equation}
Now we can apply Lemma \ref{bgu}  by choosing $u_n=\nu_n$ and $u_n^*\equiv \varrho$ (this is possible by \eqref{lops}) to get $\nu=\varrho.$
 Note that the condition \eqref{lily} in Lemma \ref{bgu} is satisfied by \eqref{nupp}.

To conclude, we have obtained as $n\to+\infty$
\begin{equation}\label{m11}
\nu_n\rightharpoonup \varrho \mbox{  in $L^p(\mathbb R^2)$.}
\end{equation}
 On the other hand, in view of \eqref{lops} we have
\begin{equation}\label{m12}
\|\nu_n\|_{L^p(\mathbb R^p)}=\|\nu_n^*\|_{L^p(\mathbb R^p)}=\|\varrho\|_{L^p(\mathbb R^2)} \quad\forall\,n.
\end{equation}
From \eqref{m11} and \eqref{m12}, by uniform convexity we deduce  that
\begin{equation}\label{m13}
\nu_n\to \varrho\,\, \mbox{ in $L^p(\mathbb R^2)$ as $n\to+\infty.$ }
\end{equation}
Hence  the proposition has been proved.

\end{proof}

\begin{proof}[Proof of Theorem \ref{thm1}]
Items (1)-(5) have been proved in Proposition \ref{prop1}. Items (6)-(8) follow from Propositions \ref{lam203}, \ref{prop20} and \ref{prop60}, respectively.
\end{proof}

\section{Fine asymptotic estimates for Burton's vortex pairs}

As an application of Theorem \ref{thm1}, we are able to figure out the asymptotic behavior for Burton's vortex pairs in \cite{B10} as the impulse goes to infinity.

Recall that $\Sigma_\alpha$ is the set of maximizers of $E$ relative to $\mathcal S_{1,\alpha}.$   See Lemma \ref{lemm1}.
Denote $\Sigma^0_\alpha$ the  set of maximizers that are Steiner-symmetric in the $x_1$-axis, or equivalently, 
\begin{equation}\label{sigma0}
\Sigma^0_\alpha=\left\{\zeta\in\Sigma_\alpha\,\,\bigg|\,\, \int_\Pi x_1\zeta(\mathbf x)d\mathbf x=0\right\}.
\end{equation}
The main result of this section is  the following theorem.

\begin{theorem}\label{coro1}
Let $\alpha_0>0$ be determined in Lemma \ref{lemm1}. Then for any $\alpha>\alpha_0$, the following assertions  for $\Sigma_{\alpha}$ hold true.
\begin{itemize}
\item[(a)] There exists some  $C>0$, depending only on $\varrho$, such that
\[ {\rm diam}(V_\zeta)\leq C\quad\forall\,\zeta\in\Sigma_\alpha.\]
\item[(b)] Let $q_\zeta$ be determined by (iii) in Lemma \ref{lemm1}, then \[\alpha q_\zeta\to  \frac{\kappa^2}{4\pi}\quad\mbox{as }\alpha\to+\infty,\] uniformly for $\zeta\in\Sigma_\alpha^0$. More precisely, for any $\epsilon>0,$ there exists some  $\alpha_1>\alpha_0,$ depending only on $\varrho$ and $\epsilon,$  such that for any $\alpha>\alpha_1,$ it holds that
\[\left|\alpha q_\zeta- \frac{\kappa^2}{4\pi }\right|<\epsilon\quad\forall\,\zeta\in\Sigma^0_{\alpha}.\]

\item[(c)] For $\zeta\in\Sigma^0_\alpha$, extend $\zeta$ to $\mathbb R^2$ such that $\zeta=0$ in the lower half-plane and define  
\[\nu^{\zeta,\alpha}(\mathbf x)=\zeta(\mathbf x+\tilde{\mathbf x}_\alpha),\quad\tilde{\mathbf x}_\alpha=\left(0,\frac{\alpha}{\kappa}\right).\] Then $\nu^{\zeta,\alpha}\to\varrho$ in $L^p(\mathbb R)$ as $\alpha\to+\infty,$ uniformly for $\zeta\in\Sigma_\alpha^0.$ More precisely, for any $\epsilon>0,$ there exists some  $\alpha_2>\alpha_0,$ such that for any $\alpha>\alpha_2,$ it holds that
\[\|\nu^{\zeta,\alpha}-\varrho\|_{L^p(\mathbb R^2)}<\epsilon\quad\forall\,\zeta\in\Sigma^0_\alpha.\]

\end{itemize}
\end{theorem}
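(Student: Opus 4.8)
The plan is to deduce Theorem \ref{coro1} from Theorem \ref{thm1} by exploiting the scaling correspondence of Lemma \ref{lam1}. Fix $i_0=1$ in Theorem \ref{thm1}, so that the threshold $\varepsilon_0$ and all the constants appearing in its items (6)--(8) depend only on $\varrho$. For a parameter $\alpha>0$ set $\varepsilon=1/\alpha$. Since $\Sigma_\alpha=\Gamma_{1,\alpha}=\Gamma_{1,i_0/\varepsilon}$, item (ii) of Lemma \ref{lam1} says that the scaling map $\zeta\mapsto\zeta^\varepsilon$ is a bijection from $\Sigma_\alpha$ onto $\Gamma_{\varepsilon,1}$; the computation $\int_\Pi x_1\zeta^\varepsilon\,d\mathbf x=\varepsilon\int_\Pi x_1\zeta\,d\mathbf x$ shows it restricts to a bijection from $\Sigma^0_\alpha$ onto $\Gamma^0_{\varepsilon,1}$. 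As $\alpha\to+\infty$ we have $\varepsilon\to0^+$, so for $\alpha$ large the conclusions of Theorem \ref{thm1} apply to $\zeta^\varepsilon$ and can be transported back to $\zeta$.

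Next I would record how the three relevant quantities transform under this scaling. First, since $\zeta^\varepsilon(\mathbf x)>0$ exactly when $\mathbf x/\varepsilon\in V_\zeta$, one has $V_{\zeta^\varepsilon}=\varepsilon V_\zeta$ and hence $\mathrm{diam}(V_\zeta)=\varepsilon^{-1}\mathrm{diam}(V_{\zeta^\varepsilon})$; combined with Theorem \ref{thm1}(6) this gives $\mathrm{diam}(V_\zeta)\le\varepsilon^{-1}\cdot C\varepsilon=C$, proving (a) for large $\alpha$. Second, applying Lemma \ref{ffs1} to both $\zeta^\varepsilon\in\Gamma_{\varepsilon,1}$ and $\zeta\in\Gamma_{1,\alpha}$ and changing variables in the resulting double integrals yields $\lambda_{\zeta^\varepsilon}=\varepsilon^{-1}q_\zeta$, so that $\alpha q_\zeta=\alpha\varepsilon\lambda_{\zeta^\varepsilon}=\lambda_{\zeta^\varepsilon}$; by Theorem \ref{thm1}(7), $\lambda_{\zeta^\varepsilon}\to\kappa^2/(4\pi)$ uniformly over $\Gamma_{\varepsilon,1}$, which via the bijection gives (b). Third, a direct substitution shows $\hat{\mathbf x}/\varepsilon=(0,\alpha/\kappa)=\tilde{\mathbf x}_\alpha$ and $\nu^{\zeta,\alpha}(\mathbf x)=\zeta(\mathbf x+\tilde{\mathbf x}_\alpha)=\varepsilon^2\zeta^\varepsilon(\varepsilon\mathbf x+\hat{\mathbf x})=\nu^{\zeta^\varepsilon,\varepsilon}(\mathbf x)$, so Theorem \ref{thm1}(8) gives $\nu^{\zeta,\alpha}\to\varrho$ in $L^p(\mathbb R^2)$ uniformly over $\Sigma^0_\alpha$, which is (c). In each case the uniformity over $\Sigma_\alpha$ (or $\Sigma^0_\alpha$) is inherited from the uniformity over $\Gamma_{\varepsilon,1}$ (or $\Gamma^0_{\varepsilon,1}$) because the scaling map is a bijection.

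The computation requiring the most care is the scaling law for the traveling speed: tracking the powers of $\varepsilon$ in $\lambda_{\zeta^\varepsilon}=\frac{1}{2\pi\kappa}\int_\Pi\int_\Pi\frac{x_2+y_2}{|\mathbf x-\bar{\mathbf y}|^2}\zeta^\varepsilon(\mathbf x)\zeta^\varepsilon(\mathbf y)\,d\mathbf x\,d\mathbf y$ through the substitution $\mathbf x=\varepsilon\mathbf u$, $\mathbf y=\varepsilon\mathbf w$ produces a single net factor $\varepsilon^{-1}$, and an error here would corrupt the constant in (b). The one genuine gap is that Theorem \ref{thm1} only applies once $\varepsilon<\varepsilon_0$, i.e. $\alpha>1/\varepsilon_0$, whereas (a) is asserted for every $\alpha>\alpha_0$. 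The asymptotic statements (b) and (c) are unaffected since they concern $\alpha\to+\infty$, but to secure the uniform bound in (a) on the residual range I would either enlarge $\alpha_0$ to $\max(\alpha_0,1/\varepsilon_0)$, or bound $\mathrm{diam}(V_\zeta)$ on the compact parameter interval $[\alpha_0,1/\varepsilon_0]$ separately using the support localization already furnished by Lemma \ref{lemm1}(ii). I expect this bookkeeping, rather than any new analysis, to be the only real work, since all the hard estimates are already packaged in Theorem \ref{thm1}.
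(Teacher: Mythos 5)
Your proposal is correct and follows essentially the same route as the paper: identify $\Sigma_\alpha$ with a rescaled copy of $\Gamma_{\varepsilon,i_0}$ via the bijection of Lemma \ref{lam1} and transport items (6)--(8) of Theorem \ref{thm1} back through the scaling. The only divergence is in (b), where you obtain $q_\zeta=\varepsilon\lambda_{\zeta^\varepsilon}$ by rescaling the integral formula of Lemma \ref{ffs1}, whereas the paper reads the same relation off by comparing the profile characterizations $\zeta^\varepsilon=f_{\zeta^\varepsilon}(\mathcal G\zeta^\varepsilon-\lambda_{\zeta^\varepsilon}x_2)$ and $\zeta=\phi_\zeta(\mathcal G\zeta-q_\zeta x_2)$ using $\mathcal G\zeta^\varepsilon(\mathbf x)=\mathcal G\zeta(\mathbf x/\varepsilon)$ --- both are one-line computations, and your remark about the residual range $\alpha\in(\alpha_0,\,i_0/\varepsilon_0]$ is a fair point that the paper itself glosses over.
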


\begin{proof}
Take $\alpha=i_0/\varepsilon.$ By Lemma \ref{lam1}, it holds that
\[\Sigma_\alpha=\Gamma_{1,i_0/\varepsilon}=\{v\in L^\infty(\Pi)\mid v^{\varepsilon}\in \Gamma_{\varepsilon, i_0}\}.\]
Therefore (6)-(8) in Theorem \ref{thm1}  can be equivalently expressed as follows:
\begin{itemize}
\item[(6)'] ${\rm diam}(V_ {\zeta^\varepsilon})\leq C\varepsilon$ for any $\zeta \in\Sigma_\alpha$, where $C>0$ does not depend on $\alpha$ (or $\varepsilon$);
\item[(7)'] $\lambda_{\zeta^\varepsilon}\to {\kappa^2}/{(4\pi i_0)}$ uniformly for any $\zeta\in\Sigma_\alpha,$ where $\lambda_{\zeta^\varepsilon}$ is determined by item (3) in Theorem \ref{thm1};
\item[(8)'] as $\varepsilon\to0^+$, $\varepsilon^2\zeta^\varepsilon(\varepsilon\cdot+\hat{\mathbf x})\to \varrho$ in $L^p(\mathbb R^2)$,  uniformly for any $\zeta\in\Sigma_\alpha.$
\end{itemize}
To prove (a), it suffices to use (6)' and the following fact
\[V_{\zeta^\varepsilon}=\varepsilon V_\zeta:=\{\varepsilon\mathbf x\mid \mathbf x\in V_\zeta\}.\]
 To prove (b), observe that
\begin{equation}\label{aiyo}
\zeta^\varepsilon =f_{\zeta^\varepsilon}(\mathcal G\zeta^\varepsilon -\lambda_{\zeta^\varepsilon}x_2) \mbox{ a.e. }\mathbf x\in \Pi \Longleftrightarrow  \zeta =\varepsilon^2 f_{\zeta^\varepsilon}(\mathcal G\zeta -\varepsilon\lambda_{\zeta^\varepsilon}x_2) \,\,\mbox{ a.e. }\mathbf x\in \Pi.
\end{equation}
Here we used the fact that
\[\mathcal G\zeta^\varepsilon(\mathbf x)=\mathcal G\zeta\left(\frac{\mathbf x}{\varepsilon}\right) \quad\forall\,\mathbf x\in\Pi.\]
On the other hand, by item (iii) in Lemma \ref{lam1}, it holds that
\begin{equation}\label{aiya}
\zeta=\phi_\zeta(\mathcal G\zeta-q_\zeta x_2) \,\,\mbox{  a.e. }\mathbf x\in \Pi.
\end{equation}
Comparing \eqref{aiyo} and \eqref{aiya} we get
\[\phi_\zeta=\varepsilon^2 f_{\zeta^\varepsilon},\quad q_\zeta=\varepsilon\lambda_{\zeta^\varepsilon},\]
which together with (7)'  yields  (b). For (c),   observe that
\[\varepsilon^2\zeta^\varepsilon(\varepsilon\mathbf x+\hat{\mathbf x})=\zeta(\mathbf x+\varepsilon^{-1}\hat{\mathbf x})=\zeta (\mathbf x+ \tilde{\mathbf x}_\alpha)\quad \forall\,\mathbf x\in\mathbb R^2.\]
Therefore  (c) follows from (8)' immediately.
\end{proof}

\section{Stability of Chaplygin-Lamb dipole}

First we recall the explicit expression of the Chaplygin-Lamb dipole.
Define
\begin{equation}
\psi_c(\mathbf x)=
\begin{cases}
x_2-\frac{2J_1(|\mathbf x|)}{J_1'(a)|\mathbf x|}x_2,&\mathbf x\in\Pi,\,\, |\mathbf x|\leq a,\\
\frac{a}{|\mathbf x|^2}x_2,&\mathbf x\in\Pi, \,\,|\mathbf x|\geq a,
\end{cases}
\end{equation}
where $J_k, k=0,1,$ is the $k$-th order Bessel function of the first kind, and  $a$ is the first positive zero of $J_1$. Then $\psi_c\in W^{2,s}_{\rm loc}(\Pi)$ and satisfies 
\begin{equation}\label{hsbb}
-\Delta\psi=(\psi-x_2)^{+}\quad \mbox{ a.e. in }\Pi.
\end{equation}
Define 
\[\zeta_c=-\Delta \psi_c.\]
In view of \eqref{hsbb}, it is easy to check that $\zeta_c(x_1-t, x_2)$ solves the vorticity \eqref{vor1}, thus corresponds to a traveling vortex pair with unit speed, generally referred to as  the Chaplygin-Lamb dipole.
Note that
\begin{equation}\label{lopao}
 I(\zeta_c)=\pi a^2,\quad \|\zeta_c\|^2_{L^2(\Pi)}=\pi a^2.
\end{equation}
See \cite{B50}, Lemma 6 for example.

Our purpose in this section is to prove the following  theorem concerning the stability of the  Chaplygin-Lamb dipole.
\begin{theorem}\label{cllb}
Let $2<s<+\infty$ be fixed. Define 
\begin{equation}
\mathcal C=\left\{ \zeta_c( x_1-\beta, x_2) \mid \beta\in\mathbb R\right\}.
\end{equation}
Then $\mathcal C$ is stable in the following sense: for any $\epsilon>0,$ there exists some $\delta>0$, such that
 for any  $L^s$-regular solution $\omega$ to the vorticity equation \eqref{vor1} satisfying  
 \[\omega(0,\cdot) \mbox{ \rm is nonnegative with  bounded support,}\quad \inf_{\zeta\in \mathcal C}\|\omega(0,\cdot)-\zeta\|_{\mathfrak X_s}<\delta,\]
 it holds that
\[\inf_{\zeta\in \mathcal C}\|\omega(t,\cdot)-\zeta\|_{\mathfrak X_s}<\epsilon\quad\forall \,t\ge 0.\]
Here the norm  $\|\cdot\|_{\mathfrak X_s}$ is defined as in Theorem \ref{thm1}.
\end{theorem}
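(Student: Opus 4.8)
The plan is to realize the orbit $\mathcal C$ as the \emph{complete} set of maximizers of the kinetic energy over a fixed rearrangement class with prescribed impulse, and then to deduce stability by the Lyapunov scheme that already underlies Lemma \ref{lemm1}. Set $i_\ast=\pi a^2=I(\zeta_c)$ and consider
\[
M=\sup\bigl\{E(v)\mid v\in\mathcal R(\zeta_c),\ I(v)=i_\ast\bigr\},\qquad
\Sigma=\bigl\{v\in\mathcal R(\zeta_c)\mid I(v)=i_\ast,\ E(v)=M\bigr\},
\]
which is meaningful since $\zeta_c$ is admissible by \eqref{lopao}. The first task is to prove $\Sigma\neq\varnothing$ together with orbital $\mathfrak X_s$-stability of $\Sigma$. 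I would obtain this by rerunning Burton's argument behind Lemma \ref{lemm1}: the a priori decay bounds of Lemmas \ref{lemm2} and \ref{lemm3} and the rearrangement inequalities \eqref{rri1}--\eqref{rri2} give tightness of any maximizing sequence after an $x_1$-translation; a concentration-compactness argument produces a maximizer; and the conservation of $E$ and $I$ along $L^s$-regular solutions upgrades compactness of maximizing sequences to Lyapunov stability of the set $\Sigma$. Note that one cannot simply quote Lemma \ref{lemm1}, whose threshold $\alpha_0$ may exceed $i_\ast$; its method is instead reused for this single class.

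Then comes the decisive and, I expect, hardest step: the identification $\Sigma=\mathcal C$. The concentration mechanism driving Theorem \ref{thm1} is unavailable here, since the dipole is non-concentrated and its support meets $\partial\Pi$. By compactness a maximizer $\zeta^\ast\in\Sigma$ exists, is Steiner-symmetric, and satisfies the Euler--Lagrange relation $\zeta^\ast=\phi(\mathcal G\zeta^\ast-\lambda x_2)$ with $\phi$ increasing and $\lambda>0$; the identity \eqref{hsbb} shows that $\zeta_c$ satisfies the same relation with the \emph{linear} profile $\phi(s)=s^+$ and speed $\lambda=1$. The genuine difficulty is that $E$ has an indefinite Hessian on this class: on the support, \eqref{hsbb} gives $-\Delta\zeta_c=\zeta_c$, so $\zeta_c$ is an eigenfunction associated with the second Dirichlet eigenvalue of its supporting disk (the $J_1$-mode, $a$ being the first zero of $J_1$), and the radial ground state yields a direction in which the energy fails to be concave. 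Consequently no soft convexity argument can pin down the maximizer; what saves the day is that the rearrangement constraint $v\in\mathcal R(\zeta_c)$ fixes the distribution function and thereby suppresses exactly these destabilizing radial perturbations.

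To convert this into uniqueness I would use the strict rearrangement inequality of Lemma \ref{bgu}: since every maximizer shares the single distribution function of $\zeta_c$, the strictness clause forces the optimal spatial configuration to be unique up to the symmetries of the problem, after which the explicit Bessel representation of the Chaplygin-Lamb dipole identifies the profile as $s\mapsto s^+$, the speed as $\lambda=1$, and hence $\zeta^\ast=\zeta_c(\cdot-\beta\mathbf e_1)$ for some $\beta\in\mathbb R$. This yields $E(\zeta_c)=M$, so $\zeta_c\in\Sigma$ and $\Sigma=\mathcal C$. The subtle points I anticipate are establishing strictness in a form strong enough to give genuine uniqueness rather than mere attainment, and controlling the interaction-with-image term in $E$ so that the self-energy comparison of Lemma \ref{bgu} remains decisive.

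Finally, with $\Sigma=\mathcal C$ in hand, stability of $\mathcal C$ follows by the usual contradiction scheme. If $\mathcal C$ were unstable there would exist $\epsilon_0>0$, nonnegative compactly supported initial data $\omega^n(0,\cdot)$ with $\inf_{\zeta\in\mathcal C}\|\omega^n(0,\cdot)-\zeta\|_{\mathfrak X_s}\to0$, and times $t_n$ with $\inf_{\zeta\in\mathcal C}\|\omega^n(t_n,\cdot)-\zeta\|_{\mathfrak X_s}\ge\epsilon_0$. Since $E$ and $I$ are conserved and the vorticity stays (up to the size of the initial perturbation) in $\mathcal R(\zeta_c)$, the $\mathfrak X_s$-continuity of $E$ and $I$ makes $\omega^n(t_n,\cdot)$ an asymptotically admissible, asymptotically maximizing sequence for $M$; the compactness from the first step then forces $\omega^n(t_n,\cdot)\to\mathcal C$ in $\mathfrak X_s$ after an $x_1$-translation, contradicting $\epsilon_0$. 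The only care required is that the data lies merely near, not inside, the constraint set, which is handled by comparing $\omega^n(t_n,\cdot)$ with a nearby element of $\mathcal R(\zeta_c)\cap\{I=i_\ast\}$ and invoking continuity of $E$ and $I$, exactly as in the proof of item (v) of Lemma \ref{lemm1}.
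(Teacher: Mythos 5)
Your overall plan --- realize $\mathcal C$ as the \emph{full} set of maximizers of $E$ over a rearrangement class with impulse $\pi a^2$ and then invoke a ``compactness of maximizing sequences implies Lyapunov stability'' scheme --- is exactly the paper's strategy, and your final contradiction argument is essentially what Burton's Theorem 2 in \cite{B10} (Theorem \ref{bffk} here) already packages: that theorem has no largeness threshold on the impulse, so there is no need to rerun the concentration-compactness machinery; one only has to check that the maximizer set over the weak closure is nonempty and lands inside $\mathcal R(\zeta_c)$.

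The genuine gap is in your identification step $\Sigma=\mathcal C$. Lemma \ref{bgu} is a rigidity statement for the \emph{self-interaction} energy $\iint\ln\frac{1}{|\mathbf x-\mathbf y|}u\,u$ alone: it says this quantity is maximized by the symmetric-decreasing rearrangement, with equality only if $u$ is already symmetric-decreasing about its center of mass. The Chaplygin--Lamb dipole is not symmetric-decreasing (its support is a half-disk touching $\partial\Pi$), so the strictness clause, applied to a maximizer, would force the maximizer \emph{away} from $\zeta_c$, not toward it. The lemma is decisive in the concentrated regime of Proposition \ref{prop60} precisely because there the image term $\iint\ln|\mathbf x-\bar{\mathbf y}|\,\zeta\zeta$ becomes a constant ($\kappa^2\ln(2i_0/\kappa)$ plus $o(1)$) after blow-up, so maximizing $E$ degenerates into maximizing the self-energy. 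For the dipole the support meets the axis, the image term is of the same order as the self-energy and genuinely shapes the maximizer, and no control of it of the kind you allude to is available; your ``suppression of destabilizing radial perturbations by the rearrangement constraint'' does not produce a proof. The paper closes this step by importing Burton's isoperimetric characterization of the Lamb dipole (\cite{B50}, Theorem \ref{bfkk}): every maximizer of $E-I$ over the \emph{convex} set $\mathcal B=\{v\ge0,\ I(v)=\pi a^2,\ \|v\|_{L^2}\le\pi^{1/2}a\}$ is a scaled translate of $\zeta_c$, and the impulse normalization pins the scale to $1$. Since $\mathcal C\subset\mathcal A\subset\mathcal B$ with $\mathcal A=\{v\in\overline{\mathcal R^W(\zeta_c)}\mid I(v)=\pi a^2\}$, the maximizer set over $\mathcal A$ is exactly $\mathcal C$; this simultaneously delivers attainment (your first step) and the identification (your second step), after which Theorem \ref{bffk} gives stability. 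Without a substitute for Theorem \ref{bfkk}, your argument does not close.
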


\begin{remark}
Abe-Choi \cite{AC} recently proved a similar stability result  based on a new variational characterization of the Chaplygin-Lamb dipole. However,  the initial perturbation class and the norm of measuring stability adopted by  Abe-Choi  are different from the ones in Theorem \ref{cllb}.
\end{remark}

The proof of Theorem \ref{cllb} is based on a stability theorem proved by Burton in \cite{B10} and a variational characterization proved by Burton in \cite{B50}. They are stated as follows.

\begin{theorem}[\cite{B10}, Theorem 2]\label{bffk}
Let $2<s<+\infty,$ $v_0\in L^p(\Pi)$ be nonnegative with bounded support, and $\gamma>0$ be fixed.
Let  $\mathcal R(v_0)$  be the rearrangement class of $v_0$ in $\Pi$, and 
 $\overline{\mathcal R^W(v_0)}$  be the closure of  $\mathcal R(v_0)$ in the weak topology of $L^s(\Pi)$. Denote by $\Sigma$ the set of maximizers of $E$ subject to
\[\left\{v\in\overline{\mathcal R^W(v_0)}\mid I(v)=\gamma\right\}.\]
If $\varnothing\neq \Sigma\subset\mathcal R(v_0),$ then $\Sigma$ is stable as in  Theorem \ref{cllb}.
\end{theorem}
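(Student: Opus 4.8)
The plan is to argue by contradiction, combining the variational characterization of $\Sigma$ with the conservation laws built into the notion of $L^s$-regular solution. First I would record the structural features of the admissible set: $\overline{\mathcal R^W(v_0)}$ is convex and, since $v_0\in L^1\cap L^s(\Pi)$ and all rearrangements share the same $L^1$ and $L^s$ norms, it is bounded and sequentially weakly compact in $L^s(\Pi)$, with the constraint $I(v)=\gamma$ weakly closed on tight subsequences. By Definition \ref{def1}(ii) both $E$ and $I$ are conserved along any $L^s$-regular solution, and the transport structure of \eqref{vor1} preserves the distribution function of the vorticity, so $\omega(t,\cdot)$ is a rearrangement of $\omega(0,\cdot)$ for every $t$. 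These three facts, namely weak compactness, conservation, and rearrangement invariance, form the backbone of the argument.

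Suppose stability fails. Then there are $\epsilon_0>0$, $L^s$-regular solutions $\omega_n$ with $\omega_n(0,\cdot)$ nonnegative of bounded support and $\inf_{\zeta\in\Sigma}\|\omega_n(0,\cdot)-\zeta\|_{\mathfrak X_s}\to0$, and times $t_n>0$ with $\inf_{\zeta\in\Sigma}\|\omega_n(t_n,\cdot)-\zeta\|_{\mathfrak X_s}\ge\epsilon_0$. Write $u_n=\omega_n(0,\cdot)$, $w_n=\omega_n(t_n,\cdot)$, and pick $\zeta_n\in\Sigma$ with $\|u_n-\zeta_n\|_{\mathfrak X_s}\to0$. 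Let $M$ denote the maximum value of $E$ over $\{v\in\overline{\mathcal R^W(v_0)}\mid I(v)=\gamma\}$. Since $E$ and $I$ are continuous in the $\mathfrak X_s$ norm on this class (continuity of the energy uses $s>2$ together with the decay bounds of Lemmas \ref{lemm2} and \ref{lemm3}), and $\zeta_n$ maximizes with $I(\zeta_n)=\gamma$, conservation gives $E(w_n)=E(u_n)\to M$ and $I(w_n)=I(u_n)\to\gamma$. Moreover $w_n\in\mathcal R(u_n)$, so $\|w_n\|_{L^1}=\|u_n\|_{L^1}\to\|v_0\|_{L^1}$ and $\|w_n\|_{L^s}=\|u_n\|_{L^s}\to\|v_0\|_{L^s}$, and the distribution functions of the $w_n$ converge to that of $v_0$.

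I would then extract a compactness limit. After translating each $w_n$ in the $x_1$-direction (legitimate since $E$, $I$ and $\Sigma$ are $x_1$-translation invariant, so such a shift does not alter $\inf_{\zeta\in\Sigma}\|\cdot-\zeta\|_{\mathfrak X_s}$) and passing to a subsequence, $w_n\rightharpoonup w$ weakly in $L^s(\Pi)$. The impulse constraint pins the vertical center and, with Lemmas \ref{lemm2} and \ref{lemm3}, yields tightness that rules out vertical escape of mass; the concentration-compactness alternatives of vanishing and dichotomy are excluded by the positivity of the limiting energy $M$ together with the mass and impulse constraints, so the recentred sequence is tight. Tightness gives, on one hand, $I(w)=\gamma$ and $w\in\overline{\mathcal R^W(v_0)}$ (weak limits of rearrangements of the $u_n$, whose distributions converge to that of $v_0$, lie in this weak closure), and on the other hand weak continuity of the energy: since $s>2$ the Green operator $\mathcal G$ is smoothing enough that $\mathcal G w_n\to\mathcal G w$ strongly on tight sequences, whence $E(w_n)\to E(w)$. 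Therefore $E(w)=M$, so $w$ is a maximizer, that is $w\in\Sigma$.

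The final step uses the hypothesis $\Sigma\subset\mathcal R(v_0)$ in an essential way: it forces $w$ to be a genuine rearrangement of $v_0$, so $\|w\|_{L^s}=\|v_0\|_{L^s}=\lim_n\|w_n\|_{L^s}$. Combined with $w_n\rightharpoonup w$ and the uniform convexity of $L^s(\Pi)$ for $1<s<\infty$, this upgrades weak to strong convergence, $w_n\to w$ in $L^s$; the analogous norm identities for $L^1$ (with nonnegativity and tightness) and the impulse convergence give $\|w_n-w\|_{\mathfrak X_s}\to0$ with $w\in\Sigma$, contradicting $\inf_{\zeta\in\Sigma}\|w_n-\zeta\|_{\mathfrak X_s}\ge\epsilon_0$. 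I expect the main obstacle to be precisely this compactness step on the unbounded half-plane: securing tightness so that no mass escapes to infinity (vertically, via the impulse bound and the decay lemmas; horizontally, via recentring and the exclusion of dichotomy), which is what simultaneously yields $w\in\overline{\mathcal R^W(v_0)}$ with $I(w)=\gamma$ and the weak continuity of the sign-indefinite logarithmic energy. The role of the weak closure $\overline{\mathcal R^W(v_0)}$ is to make the variational problem well posed and weakly compact, while the hypothesis $\Sigma\subset\mathcal R(v_0)$ is exactly what converts the resulting weak convergence into the strong $\mathfrak X_s$ convergence that stability demands.
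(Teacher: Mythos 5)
First, a point of comparison: the paper does not prove this statement at all --- it is imported verbatim from Burton (\cite{B10}, Theorem 2), and the text around Theorem \ref{cllb} only cites it. So there is no in-paper argument to measure you against; what can be said is that your sketch reconstructs, at the level of architecture, the strategy Burton actually uses: conservation of $E$ and $I$ along $L^s$-regular solutions, strong compactness (up to $x_1$-translations) of almost-maximizing sequences for the constrained problem, and then the hypothesis $\Sigma\subset\mathcal R(v_0)$ to convert weak convergence plus convergence of the $L^s$-norms into strong convergence via uniform convexity. Your reading of the two hypotheses --- the weak closure $\overline{\mathcal R^W(v_0)}$ making the variational problem weakly compact, and $\Sigma\subset\mathcal R(v_0)$ supplying the norm identity that upgrades the convergence --- is exactly right.

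That said, as a proof the proposal has two genuine gaps. (i) The step ``the transport structure preserves the distribution function, so $\omega(t,\cdot)$ is a rearrangement of $\omega(0,\cdot)$'' does not follow from Definition \ref{def1}: an $L^s$-regular solution is only required to solve \eqref{vor1} distributionally and to conserve $E$ and $I$; for unbounded vorticity ($s>2$, no $L^\infty$ bound, no uniqueness) equimeasurability is not part of the definition and is not free. One either needs renormalization theory in the style of DiPerna--Lions (plausible here, since $\nabla\mathbf v\in L^s_{\rm loc}$ by Calder\'on--Zygmund and $1/s+1/s<1$, but this must be argued, including control of mass at infinity), or the weaker invariance statement that the evolved vorticity stays in the weak closure of the initial rearrangement class --- which is precisely why the theorem is formulated over $\overline{\mathcal R^W(v_0)}$ rather than $\mathcal R(v_0)$, cf.\ \cite{B6}. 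Relatedly, your $w_n$ are rearrangements of $u_n$, and $u_n\notin\mathcal R(v_0)$; the claim that weak limits of such sequences land in $\overline{\mathcal R^W(v_0)}$ requires a quantitative approximation between nearby rearrangement classes (nonexpansivity of the decreasing rearrangement, Douglas's characterization of the weak closure \cite{Dou}), which you assert but do not supply. (ii) The entire compactness step --- tightness of the recentred sequence, exclusion of vanishing and dichotomy for the sign-indefinite logarithmic energy on the unbounded half-plane, and $E(w_n)\to E(w)$ (note $E$ is \emph{not} weakly continuous on $L^s$; even with tightness one must split the kernel and use bounds of the type in Lemmas \ref{lemm2}--\ref{lemm3}) --- is the actual content of Burton's compactness theorem in \cite{B10} and occupies most of that paper. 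In your sketch it is invoked generically, so what you have is a correct road map whose hardest segments are named but not traversed.
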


\begin{theorem}[\cite{B50}, p. 76]\label{bfkk}
The functional $E-I$ attains it maximum value subject to
\[\left\{v\in L^2(\Pi)\mid v\geq 0 \,\,\mbox{\rm a.e., }\,\, I(v)<+\infty,\,\,\|v\|_{L^2(\Pi)}\leq \pi^{1/2} a\right\},\]
and any maximizer must be of the form  
\[\alpha\zeta_c(\alpha(x_1-\beta),\alpha x_2)\]
 for some  $\alpha\geq 0$ and some $\beta\in\mathbb R$.
\end{theorem}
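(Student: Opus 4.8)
The plan is to treat this as a constrained convex maximization, extract the Euler--Lagrange equation, and show its solutions are forced to be rescaled Chaplygin--Lamb dipoles. Write $\mathcal A=\{v\in L^2(\Pi)\mid v\ge 0,\ I(v)<+\infty,\ \|v\|_{L^2(\Pi)}\le \pi^{1/2}a\}$ and $\mathcal J(v)=E(v)-I(v)$. The guiding computation is that for the dipole $\zeta_c=(\psi_c-x_2)^+$, using $\psi_c=\zeta_c+x_2$ on the support and \eqref{lopao},
\[
\mathcal J(\zeta_c)=\tfrac12\int_\Pi\zeta_c\psi_c\,d\mathbf x-\int_\Pi x_2\zeta_c\,d\mathbf x=\tfrac12\|\zeta_c\|_{L^2(\Pi)}^2-\tfrac12 I(\zeta_c)=0 .
\]
Since the map $v\mapsto \alpha\,v(\alpha\,\cdot)$ preserves $\|v\|_{L^2(\Pi)}$ and multiplies both $E$ and $I$ by $\alpha^{-2}$, every rescaled dipole $\alpha\zeta_c(\alpha(x_1-\beta),\alpha x_2)$ lies on the sphere $\|v\|_{L^2(\Pi)}=\pi^{1/2}a$ with $\mathcal J=0$; as $\mathcal J(0)=0$ as well, the theorem reduces to proving $\sup_{\mathcal A}\mathcal J=0$ and that equality forces $v$ into this family.

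First I would show the supremum is attained. After a Steiner symmetrization in $x_1$ (which, by a Riesz-type inequality in the spirit of \eqref{rri2} and as carried out in \cite{B10}, does not decrease $E$ while preserving $v\ge0$, $\|v\|_{L^2(\Pi)}$ and $I$), a maximizing sequence may be taken symmetric-decreasing in $x_1$. The $L^2$ bound together with finiteness of the impulse then prevents loss of compactness: any mass escaping to large $x_2$ must have vanishing total charge (since $I$ is bounded), hence carries vanishing self-energy, so it cannot improve $\mathcal J$; horizontal splitting is ruled out by the symmetrization. A weak $L^2$ limit $v^*\in\mathcal A$ is therefore a maximizer. \emph{This non-compactness control on the unbounded half-plane is a genuine technical point, handled as in Burton \cite{B10,B50}.}

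Next, at a maximizer $v^*$ I would write the Karush--Kuhn--Tucker conditions for the cone constraint $v\ge0$ and the ball constraint: there is a multiplier $\mu\ge0$ with $\mathcal G v^*-x_2-\mu v^*\le0$ a.e.\ and equality on $\{v^*>0\}$. A short argument excludes $\mu=0$ when $v^*\neq0$, since then $\mathcal G v^*=x_2$ on the core would give $-\Delta(\mathcal G v^*)=0$ there, contradicting $-\Delta(\mathcal G v^*)=v^*>0$. Hence
\[
v^*=\tfrac1\mu\big(\mathcal G v^*-x_2\big)^+\quad\text{a.e.\ in }\Pi,\qquad \mu>0 .
\]
Setting $\psi=\mathcal G v^*\ge0$ this reads $-\Delta\psi=\tfrac1\mu(\psi-x_2)^+$ in $\Pi$ with $\psi=0$ on $\partial\Pi$.

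The crux is the rigidity for this free-boundary semilinear problem. The rescaling $\Psi(\mathbf x)=\mu^{-1/2}\psi(\mu^{1/2}\mathbf x)$ removes the multiplier and yields exactly the Chaplygin--Lamb equation \eqref{hsbb}, $-\Delta\Psi=(\Psi-x_2)^+$ in $\Pi$ with $\Psi=0$ on $\partial\Pi$, bounded and decaying at infinity. I would then invoke a uniqueness theorem for this equation: using the $x_1$-symmetry of $v^*$, a moving-plane/Kelvin-transform argument shows the core $\{\Psi>x_2\}$ is a half-disk centred on $\partial\Pi$, and inside it the radial reduction of $\Psi-x_2$ solves a Bessel equation whose bounded solution is pinned to the $J_1$-profile defining $\psi_c$; hence $\Psi$ is an $x_1$-translate of $\psi_c$. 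Undoing the rescaling gives $v^*=\alpha\zeta_c(\alpha(x_1-\beta),\alpha x_2)$ with $\alpha=\mu^{-1/2}$, whence $\mathcal J(v^*)=0$ and $\sup_{\mathcal A}\mathcal J=0$; conversely every rescaled dipole (and $v=0$, i.e.\ $\alpha=0$) attains this value, so the maximizer set is precisely $\{\alpha\zeta_c(\alpha(x_1-\beta),\alpha x_2):\alpha\ge0,\ \beta\in\mathbb R\}$. The main obstacle is this last uniqueness step---the passage from the Euler--Lagrange equation to the explicit Bessel dipole---where the special structure of the Chaplygin--Lamb solution is essential; the preliminary non-compactness control for existence is the secondary difficulty.
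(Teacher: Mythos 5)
Two remarks before the substance: the paper offers no proof of this statement at all---it is imported verbatim from Burton \cite{B50} (the remark following the theorem even points to p.~76 of that paper for its location)---so your attempt is measured against Burton's argument, not anything in this paper. Your preliminary reductions are correct and consistent with Burton's variational framework: the computation $\mathcal J(\zeta_c)=\frac12\|\zeta_c\|_{L^2(\Pi)}^2-\frac12 I(\zeta_c)=0$ using \eqref{lopao}, the scaling laws $E(v_\alpha)=\alpha^{-2}E(v)$, $I(v_\alpha)=\alpha^{-2}I(v)$, $\|v_\alpha\|_{L^2}=\|v\|_{L^2}$ for $v_\alpha(\mathbf x)=\alpha v(\alpha\mathbf x)$, and the variational-inequality (KKT) derivation of $v^*=\mu^{-1}(\mathcal G v^*-x_2)^+$ with $\mu>0$ are all fine. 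The fatal gap is the rigidity step. A moving-plane or Kelvin-transform argument can yield symmetry and monotonicity of $\Psi$ in $x_1$---which you essentially already had from symmetrization---but it gives no control whatsoever on the shape of the coincidence set $\{\Psi=x_2\}$; nothing in that technique forces the vortex core to be a half-disk. Uniqueness (up to translation) of solutions of $-\Delta\Psi=(\Psi-x_2)^+$ in $\Pi$ is the two-dimensional analogue of Amick--Fraenkel's uniqueness theorem for Hill's spherical vortex, a genuinely deep free-boundary problem that cannot simply be ``invoked''; if it were available off the shelf, the whole theorem would be nearly immediate. Burton's classification in \cite{B50} does not pass through PDE uniqueness: it exploits the fact that the nonlinearity is \emph{linear} on the core, so that $u=\Psi-x_2$ is a positive principal Dirichlet eigenfunction of the core domain, and the core and $J_1$-profile are pinned down by eigenvalue/rearrangement arguments tied to the constraints, not by moving planes. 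Since the stability proof of Theorem \ref{cllb} needs $\mathcal C$ to be the \emph{entire} maximizer set (to feed into Theorem \ref{bffk}), the classification is precisely the part your proposal leaves unproved.

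There is also a structural circularity in your existence step that you labeled ``secondary''. Because $\mathcal J(v_\alpha)=\alpha^{-2}\mathcal J(v)$ with $v_\alpha$ still admissible, the supremum of $\mathcal J$ over your set $\mathcal A$ is either $0$ or $+\infty$: the moment one admissible $v$ has $\mathcal J(v)>0$, dilation sends $\mathcal J\to+\infty$ and no maximizer exists, so concentration-compactness cannot even get started. You must therefore first prove an a priori inequality of the form $E(v)\leq C\, I(v)$ on $\mathcal A$---and with the sharp constant this \emph{is} the theorem. Burton sidesteps this by maximizing $E$ with the impulse prescribed, $I(v)=\gamma$, where finiteness of the supremum follows from elementary kernel estimates of the type in Lemma \ref{lemm2}, and the $E-I$ formulation is recovered afterwards by scaling. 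A further small gap: to classify \emph{all} maximizers you cannot symmetrize the sequence in advance; you must show that any given maximizer is itself Steiner-symmetric (via the equality cases of the Riesz rearrangement inequality, in the spirit of Lemma \ref{bgu}) before any symmetry-based step applies. I would restructure along Burton's lines and replace the moving-plane claim by the eigenvalue/rearrangement rigidity.
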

Note that  Theorem \ref{bfkk} did not appear as a theorem in \cite{B50}, but  can be found  at the end of Section 3, p. 76.

Now we are ready to prove Theorem \ref{cllb}.
\begin{proof}[Proof of Theorem \ref{cllb}]
By Theorem \ref{bffk} it suffices to show that $\mathcal C$ is exactly the set of maximizers of $E$ subject to 
\[\mathcal A:=\left\{v\in\overline{\mathcal R^W(\zeta_c)}\mid I(v)=\pi a^2\right\}.\]
To this end, define
\[\mathcal B:=\left\{v\in L^2(\Pi)\mid v\geq 0 \,\,\mbox{\rm a.e., }\,\, I(v)=\pi a^2,\,\,\|v\|_{L^2(\Pi)}\leq \pi^{1/2} a\right\},\]
In view of \eqref{lopao} and Theorem \ref{bfkk}, it is easy to see that $\mathcal C$ is  exactly the set of maximizers of $E-I$ over $\mathcal B.$ Therefore $\mathcal C$ is  exactly the set of maximizers of $E$ over $\mathcal B.$ Taking into account the fact that $\mathcal C\subset \mathcal A\subset \mathcal B,$
 we immediately deduce that $\mathcal C$ is  exactly the set of maximizers of $E$ over $\mathcal A.$ Hence the proof is finished.

\end{proof}

\begin{remark}
By the proof of Theorem \ref{cllb}, $\mathcal C$ is in fact the set of maximizers of $E$  subject to
\[ \left\{v\in \mathcal R(\zeta_c)\mid I(v)=\pi a^2\right\}.\]
This means that for some special rearrangement class and impulse, existence and stability in Theorem \ref{thm1} also holds true, even if $\varepsilon$ is not small. However, for general  rearrangement class and impulse the theory is far from completed.
\end{remark}

{\bf Acknowledgements:}
{G. Wang was supported by the National Natural Science Foundation of China grants (12001135, 12071098), and the China Postdoctoral Science Foundation grants (2019M661261, 2021T140163). }

\phantom{s}
 \thispagestyle{empty}

\end{document}